\numberwithin{equation}{section}
\newtheorem{theorem}{Theorem}[section]
\newtheorem{corollary}[theorem]{Corollary}
\newtheorem{lemma}[theorem]{Lemma}
\newtheorem{prop}[theorem]{Proposition}
\theoremstyle{definition}
\newtheorem{remark}[theorem]{Remark}
\theoremstyle{definition}
\theoremstyle{definition}
\def\dashint{\operatorname%
{\,\,\text{\bf-}\kern-.98em\DOTSI\intop\ilimits@\!\!}}
\def\\det{\text{\det}}
\def\Xint#1{\mathchoice
 {\XXint\displaystyle\textstyle{#1}}%
 {\XXint\textstyle\scriptstyle{#1}}%
 {\XXint\scriptstyle\scriptscriptstyle{#1}}%
 {\XXint\scriptscriptstyle\scriptscriptstyle{#1}}%
 \!\int}
\def\XXint#1#2#3{{\setbox0=\hbox{$#1{#2#3}{\int}$}
  \vcenter{\hbox{$#2#3$}}\kern-.5\wd0}}
\def\dashint{\Xint-}
\def\.5{\frac{1}{2}}
\newcommand{\RN}[1]{%
  \textup{\uppercase\expandafter{\romannumeral#1}}%
}
\renewcommand{\epsilon}{\varepsilon}
\newcounter{marnote}
\begin{document}

\title[Local regularity for weighted singular parabolic equations]{Local regularity for solutions to quasi-linear singular parabolic equations with anisotropic weights}

\author[C.X. Miao]{Changxing Miao}
\address[C.X. Miao] {1. Beijing Computational Science Research Center, Beijing 100193, China.}
\address{2. Institute of Applied Physics and Computational Mathematics, P.O. Box 8009, Beijing, 100088, China.}
\email{miao\_changxing@iapcm.ac.cn}

\author[Z.W. Zhao]{Zhiwen Zhao}

\address[Z.W. Zhao]{Beijing Computational Science Research Center, Beijing 100193, China.}
\email{zwzhao365@163.com}

%\footnote{}

\date{\today} % delete this line to display the current date

%%% BEGIN DOCUMENT

\maketitle
%\tableofcontents
\begin{abstract}
This paper develops a concise procedure for the study on local behavior of solutions to anisotropically weighted quasi-linear singular parabolic equations of $p$-Laplacian type, which is realized by improving the energy inequalities and applying intrinsic scaling factor to the De Giorgi truncation method. In particular, it also presents a new proof for local H\"{o}lder continuity of the solution in the unweighted case.

%, which promotes the application of the De Giorgi theory.

\end{abstract}

\maketitle
%\date{}
%\maketitle
%{\bf Abstract}

%{\bf{Keywords:}} local H\"{o}lder regularity; quasilinear singular parabolic equations; anisotropic weights

%\noindent{\bf{MSC numbers}}: {35K92; 35B40; 35B65.}

\section{Introduction}%\label{intro}

Suppose that $\Omega$ is a smooth bounded domain in $\mathbb{R}^{n}$ with $n\geq2$. For $T>0$, write $\Omega_{T}:=\Omega\times(-T,0]$ and let $\partial_{pa}\Omega_{T}$ represent the standard parabolic boundary of $\Omega_{T}$. In this paper, we consider the following anisotropically weighted quasilinear parabolic equations with the Dirichlet condition:
\begin{align}\label{PO001}
\begin{cases}
w_{1}\partial_{t}u-\mathrm{div}(w_{2}\mathbf{a}(x,t,u,\nabla u))=w_{2}b(x,t,u,\nabla u),& \mathrm{in}\;\Omega_{T},\\
u=\psi,&\mathrm{on}\;\partial_{pa}\Omega_{T},
\end{cases}
\end{align}
where $\psi\in L^{\infty}(\partial_{pa}\Omega_{T})$, $w_{1}=|x'|^{\theta_{1}}|x|^{\theta_{2}}$, $w_{2}=|x'|^{\theta_{3}}|x|^{\theta_{4}}$, $x'=(x_{1},...,x_{n-1})$, the ranges of $\theta_{i}$, $i=1,2,3,4$ are given in the following theorems, the functions $\mathbf{a}:\Omega_{T}\times\mathbb{R}^{n+1}\rightarrow\mathbb{R}^{n}$ and $b:\Omega_{T}\times\mathbb{R}^{n+1}\rightarrow\mathbb{R}$ are assumed to be only measurable and satisfy the structure conditions as follows: for $p>1$ and $(x.t)\in\Omega_{T}$,
\begin{itemize}
{\it
\item[$(\mathbf{H1})$] $\mathbf{a}(x,t,u,\nabla u)\cdot\nabla u\geq\lambda_{1}|\nabla u|^{p}-\phi_{1}(x,t)$,
\item[$(\mathbf{H2})$] $|\mathbf{a}(x,t,u,\nabla u)|\leq\lambda_{2}|\nabla u|^{p-1}+\phi_{2}(x,t)$,
\item[$(\mathbf{H3})$] $|b(x,t,u,\nabla u)|\leq\lambda_{3}|\nabla u|^{p-1}+\phi_{3}(x,t)$.}
\end{itemize}
Here $\lambda_{i}>0$, $i=1,2,$ $\lambda_{3}\geq0$, and $\phi_{i}$, $i=1,2,3$ are nonnegative functions subject to the condition of
\begin{align}\label{ZE90}
\|\phi\|_{L^{l_{0}}(\Omega_{T},w_{2})}:=\bigg(\int_{\Omega_{T}}|\phi|^{l_{0}}w_{2}dxdt\bigg)^{\frac{1}{l_{0}}}<\infty,\quad \phi:=\phi_{1}+\phi_{2}^{\frac{p}{p-1}}+\phi_{3}^{\frac{p}{p-1}},
\end{align}
where $l_{0}$ satisfies
\begin{align}\label{E01}
l_{0}>\frac{n+p+\theta_{1}+\theta_{2}}{p}.
\end{align}
Remark the the anisotropy of the weight $|x'|^{\theta_{1}}|x|^{\theta_{2}}$ is induced by $|x'|^{\theta_{1}}$. From the perspective of geometry, $|x'|^{\theta_{1}}$ denotes a singular or degenerate line compared to the isotropic weight $|x|^{\theta_{2}}$, which is also called the Caffarelli-Kohn-Nirenberg weight due to the well-known work \cite{CKN1984}.

The primary objective of this paper is to study the local behavior for solutions to problem \eqref{PO001} with $1<p<2$. For that purpose, we succeed in improving the energy inequalities and applying intrinsic scaling factor to the De Giorgi truncation method of parabolic version, which makes us achieve a more clear and terse proof for local H\"{o}lder continuity of the solution than the classical proof procedures given in Chapter IV of \cite{D1993}. Specifically, the proof in Chapter IV of \cite{D1993} involved quite complex procedures based on comprehensive applications of the variable substitution, local energy and logarithmic estimates, especially requiring a great deal of computations in handling some integral inequalities. By contrast, our proof is very concise and mainly consists of three main ingredients in the De Giorgi truncation method including the oscillation improvement, the decay estimates and the expansion of the distribution function of solution in time, see Lemmas \ref{LEM0035}, \ref{lem005} and \ref{LEM006} below for more details. In \cite{MZ202301} there is a clear comparison that the main difference between elliptic and parabolic versions of the De Giorgi truncation method lies in the expansion in time, which was previously presented in Lemmas 3.7 and 3.8 of \cite{JX2022} for $p=2$. The proof of $p=2$ in \cite{JX2022,MZ202301} substantially utilized the homogeneity between the space and time variables and thus can be regarded as a natural extension of the classical De Giorgi method of elliptic version created in \cite{D1957}.

However, when $p\neq2$, the situation becomes markedly different due to the inhomogeneity of time-space variables such that the proof of $p=2$ cannot be applied to the case of $p\neq2$. In order to overcome this difficulty, in the case of $1<p<2$ we first improve the energy inequalities and then choose suitable scaling factor to stretch the space variable such that the homogeneity between the space and time variables is recovered, which allows us to perform the De Giorgi proof procedure of parabolic version in the rescaled cylinders. We stress that this idea is inspired by intrinsic scaling technique developed in \cite{D1993,CD1988,D1986}. In a word, the major novelty of this paper lies in providing a new and elegant proof for local H\"{o}lder continuity of solutions to quasilinear singular parabolic equations and the significance is that our investigation not only promotes the application of intrinsic scaling technique, but also further develops the De Giorgi theory such that it can be used to deal with quasilinear parabolic equations of $p$-Laplacian type.

Before stating the definition of weak solution to problem \eqref{PO001}, we first introduce the required weighted spaces. For a given weight $w$, let $L^{p}(\Omega,w)$, $L^{p}(\Omega_{T},w)$ and $W^{1,p}(\Omega,w)$ be, respectively, the weighted $L^{p}$ spaces and Sobolev spaces endowed with the following norms:
\begin{align*}
\begin{cases}
\|u\|_{L^{p}(\Omega,w)}=\left(\int_{\Omega}|u|^{p}wdx\right)^{\frac{1}{p}},\quad\|u\|_{L^{p}(\Omega_{T},w)}=\big(\int_{\Omega_{T}}|u|^{p}wdxdt\big)^{\frac{1}{p}},\vspace{0.3ex}\notag\\
\|u\|_{W^{1,p}(\Omega,w)}=\left(\int_{\Omega}|u|^{p}wdx\right)^{\frac{1}{p}}+\left(\int_{\Omega}|\nabla u|^{p}wdx\right)^{\frac{1}{p}}.
\end{cases}
\end{align*}
We say that $u\in C((-T,0];L^{2}(\Omega,w_{1}))\cap L^{p}((-T,0);W^{1,p}(\Omega,w_{2}))$ is a weak solution of problem \eqref{PO001}, provided that for any $-T<t_{1}<t_{2}\leq0$,
\begin{align*}
&\int_{\Omega}u\varphi w_{1}dx\Big|_{t_{1}}^{t_{2}}+\int_{t_{1}}^{t_{2}}\int_{\Omega}(-u\partial_{t}\varphi w_{1}+w_{2}\mathbf{a}(x,t,u,\nabla u)\cdot\nabla\varphi)dxdt\notag\\
&=\int_{t_{1}}^{t_{2}}\int_{\Omega}b(x,t,u,\nabla u)\varphi w_{2}dxdt,
\end{align*}
for every $\varphi\in W^{1,2}((0,T);L^{2}(\Omega,w_{1}))\cap L^{p}((0,T);W^{1,p}_{0}(\Omega,w_{2}))$.

Denote
\begin{align*}
\begin{cases}
\mathcal{A}=\{(\theta_{1},\theta_{2}): \theta_{1}>-(n-1),\,\theta_{2}\geq0\},\\
\mathcal{B}=\{(\theta_{1},\theta_{2}):\theta_{1}>-(n-1),\,\theta_{2}<0,\,\theta_{1}+\theta_{2}>-n\},\\
\mathcal{C}_{p}=\{(\theta_{1},\theta_{2}):\theta_{1}<(n-1)(p-1),\,\theta_{2}\leq0\},\\
\mathcal{D}_{p}=\{(\theta_{1},\theta_{2}):\theta_{1}<(n-1)(p-1),\,\theta_{2}>0,\,\theta_{1}+\theta_{2}<n(p-1)\}.
\end{cases}
\end{align*}
Define the exponent conditions as follows:
\begin{itemize}
{\it
\item[$(\mathbf{K1})$] $(\theta_{1},\theta_{2})\in[(\mathcal{A}\cup\mathcal{B})\cap(C_{p}\cup\mathcal{D}_{p})]\cup\{\theta_{1}=0,\,\theta_{2}\geq n(p-1)\}$,\;$(\theta_{3},\theta_{4})\in\mathcal{A}\cup\mathcal{B}$,
\item[$(\mathbf{K2})$] $\theta_{1}+\theta_{2}>p-n,\;\theta_{1}\geq\theta_{3},\;\theta_{1}+\theta_{2}\geq\theta_{3}+\theta_{4},\;\theta_{3}+\min\{0,\theta_{4}\}>1-n$.}
\end{itemize}
With regard to the implications of $\mathrm{(}\mathbf{K1}\mathrm{)}$--$\mathrm{(}\mathbf{K2}\mathrm{)}$, see the introduction in \cite{MZ202303} for detailed interpretations. We additionally assume that
\begin{align}\label{CON01}
\lambda_{1}\gg \lambda_{3}.
\end{align}
This condition is assumed to ensure the establishment of global $L^{\infty}$ estimate for solution to problem \eqref{PO001}, see Remark \ref{REM09} below for more detailed explanations.

In the following, we utilize $C$ to represent a universal constant depending only upon the data including $n,p,l_{0},\|\psi\|_{L^{\infty}(\partial_{pa}\Omega_{T})},\|\phi\|_{L^{l_{0}}(\Omega_{T},w_{2})}$, $\lambda_{i}$, $i=1,2,3,$ and $\theta_{i}$, $i=1,2,3,4,$ whose value may vary from line to line.

Write
\begin{align}\label{VAR01}
\varepsilon_{0}:=\frac{(p+\vartheta)(p(l_{0}-1)-n-\theta_{1}-\theta_{2})}{pl_{0}(2+\vartheta)+(p-2)(n+\theta_{3}+\theta_{4})},\quad\vartheta:=\theta_{1}+\theta_{2}-\theta_{3}-\theta_{4}.
\end{align}
To begin with, we give a precise characterization for the asymptotic behavior of solution to problem \eqref{PO001} near the singular or degenerate point of the weights by capturing an explicit upper bound on the decay rate exponent.
\begin{theorem}\label{ZWTHM90}
Assume that $1<p<2$, $n\geq2$, $\mathrm{(}\mathbf{H1}\mathrm{)}$--$\mathrm{(}\mathbf{H3}\mathrm{)}$, \eqref{ZE90}--\eqref{CON01} and $\mathrm{(}\mathbf{K1}\mathrm{)}$--$\mathrm{(}\mathbf{K2}\mathrm{)}$ hold. Let $u$ be a weak solution of problem \eqref{PO001} with $\Omega\times(-T,0]=B_{1}\times(-1,0]$. Then there exists a small constant $0<\alpha\leq\varepsilon_{0}$ depending only on the above data, such that for any fixed $t_{0}\in (-1/2,0]$ and all $(x,t)\in B_{1/2}\times(-1/2,t_{0}]$,
\begin{align}\label{QNAW001ZW}
u(x,t)=u(0,t_{0})+O(1)\big(|x|+|t-t_{0}|^{\frac{1}{p+\vartheta}}\big)^{\alpha},
\end{align}
where $\varepsilon_{0},\vartheta$ are defined by \eqref{VAR01}, $|O(1)|\leq C$ for some positive constant $C$ depending only on the above data.
\end{theorem}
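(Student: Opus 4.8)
The plan is to deduce the pointwise asymptotic expansion \eqref{QNAW001ZW} from an iteration of oscillation decay in a sequence of nested intrinsic cylinders shrinking to the point $(0,t_0)$. First I would fix $t_0\in(-1/2,0]$ and introduce, for $\rho\in(0,1/2]$, the intrinsic parabolic cylinders $Q_\rho(t_0):=B_\rho\times(t_0-\rho^{p+\vartheta},t_0]$ (truncated to stay inside $B_{1/2}\times(-1/2,0]$), where the time scaling exponent $p+\vartheta$ is exactly the one that, after the rescaling that stretches the space variable, restores homogeneity between space and time in the weighted energy inequalities — this is the "intrinsic scaling factor" advertised in the introduction. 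Writing $\omega(\rho):=\operatorname*{ess\,osc}_{Q_\rho(t_0)}u$, the goal is to show there are constants $C>0$ and $\alpha\in(0,\varepsilon_0]$, depending only on the data, with $\omega(\rho)\le C\rho^\alpha$ for all such $\rho$; granting the global $L^\infty$ bound on $u$ (which holds by \eqref{CON01}, cf. Remark~\ref{REM09}) and the continuity in time into $L^2(\Omega,w_1)$, this estimate at $\rho=|x|+|t-t_0|^{1/(p+\vartheta)}$ immediately yields \eqref{QNAW001ZW} since then $(x,t)\in Q_{c\rho}(t_0)$ for a fixed $c$.

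The heart of the argument is a single-step oscillation reduction: there is a constant $\delta\in(0,1)$, depending only on the data, such that $\omega(\tau\rho)\le\delta\,\omega(\rho)$ for a fixed ratio $\tau\in(0,1)$, whenever $Q_\rho(t_0)$ is admissible. To prove this I would apply the three ingredients already set up in the paper. In the rescaled geometry, at least one of the two truncated functions $(u-\mu^-)_+$ or $(\mu^+-u)_+$ (with $\mu^\pm$ the essential sup/inf over $Q_\rho$) is "small on a large set" at some time slice; feeding this into the improved weighted energy inequalities and running the parabolic De Giorgi iteration — Lemma~\ref{lem005} for the decay of the truncated energy and Lemma~\ref{LEM006} for propagating the smallness of the distribution function forward in time, so that the measure-theoretic information at one slice spreads over the whole subcylinder — gives that the corresponding truncation is pointwise bounded below (resp. above) by a fixed fraction of $\omega(\rho)$ on $Q_{\tau\rho}(t_0)$. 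This is precisely the oscillation-improvement mechanism of Lemma~\ref{LEM0035}. Summing the geometric decay $\omega(\tau^k\rho_0)\le\delta^k\omega(\rho_0)$ and interpolating over intermediate radii produces the Hölder bound with $\alpha:=\min\{\varepsilon_0,\,\log\delta/\log\tau\}$; the constraint $\alpha\le\varepsilon_0$ enters because the forcing term $\phi$ contributes an error of order $\rho^{\varepsilon_0(p+\vartheta)/(p+\vartheta)}=\rho^{\varepsilon_0}$ through \eqref{ZE90}–\eqref{E01}, so the oscillation cannot be shown to decay faster than that.

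I expect the main obstacle to be the interplay between the anisotropic weight and the intrinsic rescaling. Unlike the $p=2$ case treated in \cite{JX2022,MZ202301}, where space and time scale homogeneously, here one must check that after stretching $x\mapsto\lambda x$ the weighted energy inequalities retain their structural form with constants independent of the scale — in particular that the Muckenhoupt/Sobolev-type inequalities for the weights $w_1,w_2$ under the exponent conditions $(\mathbf{K1})$–$(\mathbf{K2})$ are scale-invariant on the cylinders centered at the origin, where $|x'|$ and $|x|$ are genuinely singular/degenerate. A secondary delicate point is bookkeeping the dependence of $\delta$ on $\omega(\rho)$: because $1<p<2$ the equation is singular, the natural scaling $u\mapsto u/\omega$ changes the equation, and one must verify that the structure conditions $(\mathbf{H1})$–$(\mathbf{H3})$ are stable under this normalization up to harmless modifications of $\lambda_i$ and $\phi_i$ — this is where the improvement of the energy inequalities mentioned in the introduction does the real work, and I would treat it as the technical core preceding the clean geometric iteration above.
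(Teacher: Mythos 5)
Your proposal identifies the correct overall architecture — the three De Giorgi ingredients (Lemmas \ref{LEM0035}, \ref{lem005}, \ref{LEM006}), iteration of oscillation reduction, and deduction of the pointwise expansion from the resulting H\"older decay at the vertex — and your explanation of why $\alpha\le\varepsilon_0$ (the forcing $\phi$ contributes an $R^{\varepsilon_0}$ error floor) is essentially right. However, there are two technical points where your description diverges from what the paper actually does, and these are more than cosmetic.

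First, the paper does \emph{not} normalize $u\mapsto u/\omega$ and worry about stability of $(\mathbf{H1})$--$(\mathbf{H3})$ under that change. Instead, the intrinsic scaling is encoded entirely in the \emph{spatial} radius of the cylinders: the iterate works on $Q(\tilde a_k R_k, R_k^{p+\vartheta})$ with $\tilde a_k = (\omega_k/A_*)^{(p-2)/(p+\vartheta)}\ge 1$ (space stretched by a factor that depends on $\omega_k$), while $u$ and the structure conditions are left untouched. The "improvement of the energy inequalities" of Lemma \ref{lem003} is then used directly in this un-normalized form; the delicate bookkeeping you flag at the end is handled instead by the dichotomy "either $\omega\le\sigma^{-1}A R^{\varepsilon_0}$, or the oscillation decays," which guarantees $mR\le 1$ (cylinders stay in the domain) and that the $\phi$-error is absorbed. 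Your proposed route via $u/\omega$ would change the coefficients $\phi_i$ by $\omega$-dependent factors, which interacts badly with the fixed $L^{l_0}(w_2)$-bound in \eqref{ZE90}; whether this can be repaired is not obvious and the paper avoids the issue entirely.

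Second, the claimed one-step reduction $\omega(\tau\rho)\le\delta\,\omega(\rho)$ on the \emph{fixed-geometry} cylinders $Q_\rho(t_0)=B_\rho\times(t_0-\rho^{p+\vartheta},t_0]$ is not directly what the argument produces. Proposition \ref{PRO30} gives either $\omega\le 2^{\kappa_*}AR^{\varepsilon_0}$ or $\operatorname*{osc}_{Q(mR/4,(R/4)^{p+\vartheta})} u\le\eta_*\omega$ on an $\omega$-dependent cylinder, and the passage to a genuine H\"older modulus requires constructing the nested sequence $Q(\tilde a_k R_k,R_k^{p+\vartheta})$ with the explicit recursion $\omega_{k+1}=\max\{\eta_*\omega_k, A_*R_k^{\varepsilon_0}\}$ and verifying that these cylinders are nested (Lemma \ref{LEM83}), before translating to $Q(\rho,\rho^{p+\vartheta})\subset Q(\tilde a_0\rho,\rho^{p+\vartheta})$ using $\tilde a_0\ge1$. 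Your "sum the geometric decay and interpolate" compresses this, but the interpolation is precisely where $\tilde a_k$ varies with $k$ and care is needed; the paper's Lemmas \ref{LEM83} and \ref{PRO90} do exactly that bookkeeping, and without some version of it your iteration as stated is not self-evidently closed.
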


\begin{remark}
From \eqref{E01} and $\mathrm{(}\mathbf{K2}\mathrm{)}$, we have
\begin{align*}
0\leq\vartheta<p(l_{0}-1)-n-\theta_{3}-\theta_{4},\quad p<pl_{0}-n-\theta_{1}-\theta_{2}<pl_{0}-2,
\end{align*}
which, together with the fact of $1<p<2$, leads to that
\begin{align*}
p(l_{0}-1)+2<&pl_{0}+\frac{(2-p)(pl_{0}-n-\theta_{3}-\theta_{4})}{p+\vartheta}\notag\\
\leq&pl_{0}+\frac{(2-p)(pl_{0}-n-\theta_{1}-\theta_{2})}{p}<2l_{0}-\frac{4}{p}+2<2l_{0}.
\end{align*}
This implies that
\begin{align*}
\frac{1}{2l_{0}}<\frac{p+\vartheta}{pl_{0}(2+\vartheta)+(p-2)(n+\theta_{3}+\theta_{4})}<\frac{1}{p(l_{0}-1)+2}.
\end{align*}
Therefore, by fixing the values of $n,p,l_{0}$, we deduce that $\varepsilon_{0}\rightarrow0$, as $\theta_{1}+\theta_{2}\rightarrow p(l_{0}-1)-n$. Furthermore, we have from Lemma \ref{PRO90} below that the exponent $\alpha$ in \eqref{QNAW001ZW} can achieve the upper bound $\varepsilon_{0}$ when $\varepsilon_{0}$ is chosen to be sufficiently small.
\end{remark}

\begin{remark}
Combining the results of $p>2$ in \cite{MZ202303}, we see that when the value of $\theta_{1}+\theta_{2}$ sufficiently approaches $p(l_{0}-1)-n$,
\begin{align*}
\alpha=\varepsilon_{0}=
\begin{cases}
\frac{(p+\vartheta)(p(l_{0}-1)-n-\theta_{1}-\theta_{2})}{pl_{0}(2+\vartheta)+(p-2)(n+\theta_{3}+\theta_{4})},&\mathrm{for}\;1<p<2,\vspace{0.5ex}\\
\frac{2(l_{0}-1)-n-\theta_{1}-\theta_{2}}{2l_{0}},&\mathrm{for}\;p=2,\vspace{0.5ex}\\
\frac{p(l_{0}-1)-n-\theta_{1}-\theta_{2}}{p(l_{0}-1)+2},&\mathrm{for}\;p>2.
\end{cases}
\end{align*}
This reveals a discrepancy phenomenon in terms of the form of $\varepsilon_{0}$ under different $p$ such that the case of $p=2$ becomes the critical case. Moreover, the complex form of $\varepsilon_{0}$ in the case of $1<p<2$ will result in more sophisticated effects of the weights on the smoothness of solution. On one hand, we first fix the values of $n,p,l_{0},\theta_{3},\theta_{4}$. Since $0<\varepsilon_{0}<1$ and $\varepsilon_{0}\searrow0$, as $\theta_{1}+\theta_{2}\nearrow p(l_{0}-1)-n$, then we obtain that $\varepsilon_{0}$ and $\frac{\varepsilon_{0}}{p+\vartheta}$ are all decreasing in $\theta_{1}+\theta_{2}$ when $\theta_{1}+\theta_{2}$ is sufficiently close to $p(l_{0}-1)-n$, which shows the weakening effect of the weights on the time-space regularity of $u$. On the other hand, when the values of $n,p,l_{0},\theta_{1},\theta_{2}$ are fixed, it follows from a direct computation that $\varepsilon_{0}$ is decreasing with respect to $\theta_{3}+\theta_{4}$, while $\frac{\varepsilon_{0}}{p+\vartheta}$ is increasing in $\theta_{3}+\theta_{4}$. In this case the weights will weaken the regularity of the solution in space variable and meanwhile enhance the regularity in time variable, as the value of $\theta_{3}+\theta_{4}$ increases.

\end{remark}

Now we consider the following non-homogeneous weighted parabolic $p$-Laplace equation:
\begin{align}\label{PO009}
\begin{cases}
w_{1}\partial_{t}u-\mathrm{div}(w_{2}|\nabla u|^{p-2}\nabla u)=w_{2}(\lambda_{3}|\nabla u|^{p-1}+\phi_{3}(x,t)),& \mathrm{in}\;\Omega_{T},\\
u=\psi,&\mathrm{on}\;\partial_{pa}\Omega_{T}.
\end{cases}
\end{align}
In the presence of the same type of single power-type weights, we further derive the H\"{o}lder estimates for solution to problem \eqref{PO009} as follows.
\begin{theorem}\label{THM060}
Let $1<p<2$, $n\geq2$, \eqref{ZE90}--\eqref{CON01} and $\mathrm{(}\mathbf{K1}\mathrm{)}$--$\mathrm{(}\mathbf{K2}\mathrm{)}$ hold. Assume that $u$ is a weak solution of problem \eqref{PO009} with $\Omega\times(-T,0]=B_{1}\times(-1,0]$. Then there exists a constant $0<\tilde{\alpha}<\frac{\alpha}{1+\alpha}$ with $\alpha$ determined by Theorem \ref{ZWTHM90}, such that if  $(w_{1},w_{2})=(|x'|^{\theta_{1}},|x'|^{\theta_{3}})$ or $(w_{1},w_{2})=(|x|^{\theta_{2}},|x|^{\theta_{4}})$,
\begin{align}\label{MWQ099}
|u(x,t)-u(y,s)|\leq C\big(|x-y|+|t-s|^{\frac{1}{p+\vartheta}}\big)^{\tilde{\alpha}},
\end{align}
for every $(x,t),(y,s)\in B_{1/4}\times(-1/2,0],$ where $\vartheta$ is defined by \eqref{VAR01}.
\end{theorem}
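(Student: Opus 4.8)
The plan is to derive \eqref{MWQ099} from the intrinsic oscillation decay
\[
\operatorname{osc}_{Q_{r}(x_{0},t_{0})}u\le Cr^{\tilde{\alpha}},\qquad Q_{r}(x_{0},t_{0}):=B_{r}(x_{0})\times(t_{0}-r^{p+\vartheta},t_{0}],
\]
valid for all $(x_{0},t_{0})\in B_{1/4}\times(-1/2,0]$ and all small $r$, from which the two-point bound follows by a routine chaining argument in the metric $|x-y|+|t-s|^{1/(p+\vartheta)}$. Write $\mathcal{S}$ for the singular set of the weight in force: $\mathcal{S}=\{x'=0\}$ for $(w_{1},w_{2})=(|x'|^{\theta_{1}},|x'|^{\theta_{3}})$ and $\mathcal{S}=\{0\}$ for $(w_{1},w_{2})=(|x|^{\theta_{2}},|x|^{\theta_{4}})$; note $\vartheta\ge0$ by $(\mathbf{K2})$, and $u$ is bounded (cf. Remark \ref{REM09}). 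The decay will come from two ingredients. Near $\mathcal{S}$: translations along $\mathcal{S}$ and in time leave the weight and $(\mathbf{H1})$--$(\mathbf{H3})$ invariant, so Theorem \ref{ZWTHM90} (more precisely the oscillation-improvement, decay and time-expansion lemmas behind it) applies with the singular point or line at any $\bar{x}\in\mathcal{S}\cap B_{1/2}$ and any base time, giving $\operatorname{osc}_{Q_{\rho}(\bar{x},\bar{t})}u\le C\rho^{\alpha}$. Away from $\mathcal{S}$: at $x_{0}$ with $d:=\operatorname{dist}(x_{0},\mathcal{S})>0$, the anisotropic scaling $x=x_{0}+dz$, $t=t_{0}+d^{p+\vartheta}\tau$ turns \eqref{PO009} into a non-degenerate singular $p$-Laplace equation on a unit cylinder (rescaled weights comparable to $1$, the coefficient $\lambda_{3}$ multiplied by the favourable factor $d$, and the forcing rescaled to $L^{l_{0}}$-norm $\lesssim d^{\gamma_{0}}\|\phi\|_{L^{l_{0}}(\Omega_{T},w_{2})}$ with $\gamma_{0}>0$ because \eqref{E01} is strict). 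Inserting that, by the first ingredient, $\operatorname{osc}$ of $u$ on the $d$-cylinder is $\lesssim d^{\alpha}$, and applying the interior H\"older theory for singular parabolic equations (Chapter~IV of \cite{D1993}, i.e. the same intrinsic-scaling De Giorgi machinery), one obtains after undoing the scaling, for some $\beta_{0}\in(0,1)$ depending only on the data,
\[
\operatorname{osc}_{\,B_{r}(x_{0})\times(\,t_{0}-c_{*}d^{\,\vartheta+\alpha(2-p)}r^{p},\,t_{0}\,]}u\le C\bigl(d^{\alpha}+d^{\gamma_{0}}\|\phi\|_{L^{l_{0}}(\Omega_{T},w_{2})}\bigr)(r/d)^{\beta_{0}},\qquad r\le d/2 .
\]
One may also assume $\alpha\le\min\{\beta_{0},\gamma_{0}\}$, since shrinking $\alpha$ preserves \eqref{QNAW001ZW}.

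With these in hand the decay is obtained by a trichotomy in $r$ versus $d=\operatorname{dist}(x_{0},\mathcal{S})$ (the case $x_{0}\in\mathcal{S}$ being \eqref{QNAW001ZW}). If $r\gtrsim d$, then $Q_{r}(x_{0},t_{0})\subset Q_{5r}(\bar{x},t_{0})$ for the foot point $\bar{x}\in\mathcal{S}$, so the first ingredient gives $\operatorname{osc}\le Cr^{\alpha}\le Cr^{\tilde{\alpha}}$. If $d^{\,\alpha/\tilde{\alpha}}\lesssim r\lesssim d$, then $Q_{r}(x_{0},t_{0})\subset Q_{2d}(\bar{x},t_{0})$, so the first ingredient gives $\operatorname{osc}\le Cd^{\alpha}\le Cr^{\tilde{\alpha}}$ in this range. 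If $r\lesssim d^{\,1+\alpha(2-p)/(p+\vartheta)}$, then the window $c_{*}d^{\vartheta+\alpha(2-p)}r^{p}$ in the second ingredient exceeds $r^{p+\vartheta}$, so $Q_{r}(x_{0},t_{0})$ sits inside that cylinder, and (using $d^{\gamma_{0}}\|\phi\|\lesssim d^{\alpha}$, $\alpha\le\beta_{0}$ and $r<d$) the right-hand side is $\le Cr^{\tilde{\alpha}}$ as soon as $\tilde{\alpha}\le\alpha$. These three ranges exhaust all small $r$ precisely when the last one reaches the second, i.e. $d^{\alpha/\tilde{\alpha}}\lesssim d^{\,1+\alpha(2-p)/(p+\vartheta)}$, which forces $\alpha/\tilde{\alpha}\ge1+\alpha(2-p)/(p+\vartheta)$, that is $\tilde{\alpha}\le\alpha(p+\vartheta)\bigl(p+\vartheta+\alpha(2-p)\bigr)^{-1}$; since $2-p<p\le p+\vartheta$ one has $\tfrac{\alpha}{1+\alpha}<\alpha(p+\vartheta)\bigl(p+\vartheta+\alpha(2-p)\bigr)^{-1}$, so every $\tilde{\alpha}<\tfrac{\alpha}{1+\alpha}$ works, and one checks that the forcing term, controlled through $\gamma_{0}$, imposes no stronger restriction.

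The main obstacle is the second ingredient together with this bookkeeping. Because $1<p<2$, the interior estimate is genuinely \emph{intrinsic} --- it controls oscillation only over time-slabs of width $\sim d^{\vartheta+\alpha(2-p)}r^{p}$, which is \emph{thinner} than the metric slab $r^{p+\vartheta}$ unless $r\lesssim d^{\,1+\alpha(2-p)/(p+\vartheta)}$ --- in sharp contrast with the exact space-time homogeneity used for $p=2$ in \cite{JX2022,MZ202301}. Matching this thin intrinsic window to the algebraic scaling of the metric, so that the near-$\mathcal{S}$ estimate at radius $d$ and the interior estimate jointly cover every scale, is exactly what prevents $\tilde{\alpha}$ from reaching $\alpha$ and pins it below $\tfrac{\alpha}{1+\alpha}$. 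The remaining work --- the rescaling computation for the second ingredient (tracking the $d$-powers gained on $\lambda_{3}$ and on $\|\phi\|$ and keeping the rescaled weights admissible) and the optimisation of the trichotomy --- is routine but lengthy.
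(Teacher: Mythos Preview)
Your strategy and the paper's are the same in spirit: combine the decay at (or along) the singular set coming from Theorem~\ref{ZWTHM90} with an interior H\"older estimate obtained by blowing up at a point at distance $d$ from the singular set, then patch the two by an optimisation in $r$ versus $d$. The paper, however, organises the patch differently. It rescales $u_{R}(y,s)=u(Ry,R^{p+\vartheta}s)$ with $R=|x|$, uses the \emph{$L^{\infty}$-based} interior estimate $|u_{R}(y,s)-u_{R}(\bar y,\bar s)|\le C(|y-\bar y|+|s-\bar s|^{1/p})^{\gamma}$ near $\partial B_{1}$ (constant $C$ independent of $R$), and then runs a direct two-point dichotomy: compare through $(0,t)$ and $(0,\tilde t)$ when $|t-\tilde t|$ or $|x-\tilde x|$ is large relative to a power of $R$, and use the interior estimate otherwise. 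Optimising the cut-off exponents $c_{1},c_{2}$ yields $\tilde\alpha=\alpha\gamma/(\alpha+\gamma)<\alpha/(1+\alpha)$.

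Your execution has a genuine gap in the third branch of the trichotomy. From your own intrinsic-slab formula $c_{\ast}\,d^{\vartheta+\alpha(2-p)}r^{p}$, the inclusion $Q_{r}(x_{0},t_{0})\subset B_{r}(x_{0})\times(t_{0}-c_{\ast}d^{\vartheta+\alpha(2-p)}r^{p},t_{0}]$ requires $r^{\vartheta}\lesssim d^{\vartheta+\alpha(2-p)}$, i.e.\ $r\lesssim d^{\,1+\alpha(2-p)/\vartheta}$ when $\vartheta>0$, \emph{not} $d^{\,1+\alpha(2-p)/(p+\vartheta)}$; and for $\vartheta=0$ the inclusion fails for every $r>0$ once $d$ is small. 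With the corrected threshold the matching condition becomes $\tilde\alpha\le \alpha\vartheta/(\vartheta+\alpha(2-p))$, which is \emph{smaller} than $\alpha/(1+\alpha)$ whenever $\vartheta<2-p$ and collapses to $0$ as $\vartheta\to0$. So your trichotomy does not close for small (or zero) $\vartheta$, and your conclusion that every $\tilde\alpha<\alpha/(1+\alpha)$ works is not justified.

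The obstacle is self-inflicted: it comes from feeding the small oscillation $\omega_{0}\sim d^{\alpha}$ into the intrinsic scaling, which \emph{thins} the time window. If instead you use the $L^{\infty}$-based interior estimate (as the paper does), the rescaled time slab is $d^{\vartheta}r^{p}\ge r^{p+\vartheta}$ for all $r\le d$, the third branch gives $\operatorname{osc}_{Q_{r}}u\le C(r/d)^{\gamma}$ on the metric cylinder, and matching with the middle branch yields exactly $\tilde\alpha\le\alpha\gamma/(\alpha+\gamma)$, uniformly in $\vartheta\ge0$.
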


\begin{remark}
For the general time-space domain $\Omega_{T}$ containing the origin, define $R_{0}:=\min\{T^{\frac{1}{p+\vartheta}},\mathrm{dist}(0,\partial\Omega)\}$. By slightly modifying the proofs of Theorems \ref{ZWTHM90} and \ref{THM060}, we derive that the results in \eqref{QNAW001ZW} and \eqref{MWQ099} hold with $(-1/2,0]$, $B_{1/2}\times(-1/2,t_{0}]$ and $B_{1/4}\times(-1/2,0]$ replaced by $(-R^{p+\vartheta}_{0}/2,0]$, $B_{R_{0}/2}\times(-R_{0}^{p+\vartheta}/2,t_{0}]$ and $B_{R_{0}/4}\times(-R^{p+\vartheta}_{0}/2,0]$, respectively. In this case the universal constant $C$ will depend on $R_{0}$, but the exponents $\alpha$ and $\tilde{\alpha}$ are independent of $R_{0}$.

\end{remark}

\begin{remark}
Let $\nu$ be the unit outward normal defined on $\partial\Omega$. When the Dirichlet condition of $u=\psi\;\mathrm{on}\;\partial_{pa}\Omega_{T}$ is replaced with the Neumann condition as follows:
\begin{align*}
\mathbf{a}(x,t,u,\nabla u)\cdot\nu=0\;\mathrm{on}\;\partial\Omega\times(-T,0],\quad\mathrm{and}\;\,u=\psi\;\mathrm{on}\;\Omega\times\{t=-T\},
\end{align*}
the results in Theorems \ref{ZWTHM90} and \ref{THM060} also hold by the same arguments.
\end{remark}

%\begin{remark}
%Since our results cover the case of $p=2$, it doesn't need to analyze the stability for $p$ near $2$ any more.
%
%\end{remark}

%Especially our proof holds the promise of a wide application to more types of quasilinear parabolic equations.

The rest of the paper is organized as follows. Section \ref{SEC002} is to do some preliminary work, especially including the establishment of improved energy inequalities in Lemma \ref{lem003}. In Section \ref{SEC03} we complete the proofs of Theorems \ref{ZWTHM90} and \ref{THM060} by establishing three main ingredients required in the implementation of the De Giorgi truncation method. We now conclude the introduction by reviewing some previous relevant investigations.

Note that the singular and degenerate natures of equation \eqref{PO001} are the same to the following weighted parabolic $p$-Laplace equation
\begin{align*}
w_{1}\partial_{t}u-\mathrm{div}(w_{2}|\nabla u|^{p-2}\nabla u)=0.
\end{align*}
If $w_{1}=w_{2}=1$, it turns into the classical parabolic $p$-Laplace equation. In this case its modulus of ellipticity is $|\nabla u|^{p-2}$, which indicates that on the set $\{|\nabla u|=0\}$, the equation is singular for $1<p<2$, while it is degenerate for $p>2$. These singular and degenerate features do great damage to the regularity of the solution and pull it down to be only of $C^{1,\alpha}$ for some $0<\alpha<1$ compared to smooth solutions in the linear case of $p=2$. There is a long list of literature devoted to studying the smoothness of solutions to unweighted quasilinear elliptic and parabolic equations, see e.g. \cite{L2019,D1986,CD1988,D1983,D1993,U2008,E1982,DK1992,L1994,DGV2012,DGV2008,DGV200802,AL1983,DF198501,DF198502,DF1984,C1991,BSS2022} and the references therein. Besides its own mathematical interest, this nonlinear evolution model is also of great relevance to applications in nonlinear porous media flow, image processing and game theory etc, see e.g. \cite{DLK2013,ACM2004,BV2004,PS2008}. Moreover, based on the diffusion phenomena of the flows, we could classify the equation into the following three types. When $1<p<2$, the equation is called fast diffusion equation and its solution has a finite extinction time. By comparison, it is termed slow diffusion equation in the case of $p>2$ and its solution decays in time to the stable state at the rate of power-function type. The equation in the borderline case of $p=2$ becomes the linear heat equation, whose solution decays exponentially in the time variable. In addition, the regularity results have also been extended to general doubly nonlinear parabolic equations, see e.g. \cite{I1989,I199401,I199402,I1995,BDMS2018,BDL2021,BDGLS202301,BDGLS202302,KSU2012,LS2022,PV1993,VV2022,BV2010,BFV2018,V2007,DK2007,DKV1991,JX2019,JX2022,JRX2023,K1988}. Particularly in \cite{BDGLS202302} there is a systematic introduction on this topic.

For the weighted equation, it can be used to describe singular or degenerate diffusion phenomena occurring in inhomogeneous media as shown in \cite{RK1982,KR1981,KR1982}. The study on local regularity for the weighted equations originated from the famous work \cite{FKS1982}, where Fabes, Kenig and Serapioni \cite{FKS1982} established local H\"{o}lder estimates of solutions to second-order divergence form elliptic equations with the weight $|x|^{\theta_{2}}$ for any $\theta_{2}>-n$. After that, it brought a long series of papers involving different techniques and methods to study local behavior of solutions for all kinds of weighted elliptic and parabolic equations, see \cite{FP2013,DP2023,DPT2023,DPT202302,MZ202301,JX2023,STV2021,STV202102,GW1991,GW1990,BS2023,S2010,BS2019,BS1999,AA2004} and the references therein.

Note that the anisotropic weights considered in this paper is not translation invariant, which implies that the scaling properties of the weighted equation change greatly near the singular or degenerate points of the weights. This fact leads to that the solution usually becomes worse around those points and its regularity always falls to be only of $C^{\alpha}$, except for some solutions of special structures such as even solutions revealed in \cite{STV2021}. Since the even solutions of \cite{STV2021} are found in the presence of weighted linear elliptic operators, it is naturally to ask whether there exist similar $C^{1,\alpha}$ solutions to the weighted quasi-linear elliptic and parabolic equations under certain special structure condition. There makes no progress in pursuing a precise description for the damage effect of the weights on the regularity of solution until Dong, Li and Yang \cite{DLY2021,DLY2022} used spherical harmonic expansion to capture the sharp decay rate of solutions to the weighted elliptic equations near the degenerate point of the Caffarelli-Kohn-Nirenberg weight, see Lemma 2.2 in \cite{DLY2021} and Lemmas 2.2 and 5.1 in \cite{DLY2022} for finer details. Furthermore, these sharp exponents were utilized to solve the optimal gradient blow-up rates for the insulated conductivity problem arising from high-contrast composite materials, which has been previously considered as a challenging problem.

In addition, the considered anisotropic weights also have a significant application in achieving a complete classification (see \cite{LY202100}) for all $(-1)$-homogeneous axisymmetric no-swirl solutions to the stationary Navier-Stokes equations found in \cite{LLY201801,LLY201802}. See \cite{LY2023,MZ202301,MZ202302} for more properties and applications related to this type of anisotropic weights. Especially in \cite{MZ202302} Miao and Zhao studied a class of anisotropic Muckenhoupt weights possessing more general form of $|x'|^{\theta_{1}}|x|^{\theta_{2}}|x_{n}|^{\theta_{3}}$. A couple of intriguing questions such as the problem of establishing the corresponding weighted interpolation and Poincar\'{e} inequalities naturally arises from their investigation. One of the prime motivations to study such type of weights comes from the important role of the weight $|x_{n}|^{\theta_{3}}$ played in the establishment of the global regularity for solutions to fast diffusion equations as shown by Jin and Xiong \cite{JX2019,JX2022}. In particular, their results answered an open question asked by Berryman and Holland \cite{BH1980}. Similar results were also extended to porous medium equations in recent work \cite{JRX2023}.

\section{Preliminary}\label{SEC002}
For later use, we first fix some notations. For $x_{0}\in\mathbb{R}^{n}$, $t_{0}\in\mathbb{R}$ and $\rho,\tau>0$, let $B_{\rho}(x_{0})$ be the ball of centre $x_{0}$ and radius $\rho$. Define the backward parabolic cylinder as follows:
\begin{align*}
[(x_{0},t_{0})+Q(\rho,\tau)]:=B_{\rho}(x_{0})\times(t_{0}-\tau,t_{0}].
\end{align*}
For simplicity, let $B_{\rho}:=B_{\rho}(0)$ and $Q(\rho,\tau):=[(0,0)+Q(\rho,\tau)]$. For $k\in\mathbb{R}$ and $u\in C((-1,0];L^{2}(B_{1},w_{1}))\cap L^{p}((-1,0);W^{1,p}(B_{1},w_{2}))$, define
\begin{align*}
(u-k)_{+}=\max\{u-k,0\},\quad(u-k)_{-}=\max\{k-u,0\}.
\end{align*}
For $E\subset B_{1}$ and $\widetilde{E}\subset B_{1}\times(-1,0]$, let
\begin{align}\label{MEA01}
|E|_{\mu_{w_{i}}}=\int_{E}w_{i}dx,\quad |\widetilde{E}|_{\nu_{w_{i}}}=\int_{\widetilde{E}}w_{i}dxdt,\quad i=1,2.
\end{align}
We start by recalling the following two measure lemmas established in previous work \cite{MZ202301,MZ202303}.
\begin{lemma}[see Lemma 2.1 in \cite{MZ202301}]\label{LEM860}
$d\mu:=|x'|^{\theta_{1}}|x|^{\theta_{2}}dx$ is a Radon measure if $(\theta_{1},\theta_{2})\in\mathcal{A}\cup\mathcal{B}$. Furthermore, there exists some constant $C=C(n,\theta_{1},\theta_{2})>0$ such that $C^{-1}R^{n+\theta_{1}+\theta_{2}}\leq\mu(B_{R})\leq C R^{n+\theta_{1}+\theta_{2}}$ for any $R>0$.

\end{lemma}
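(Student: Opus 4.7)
The plan is to reduce everything to integrability on $B_1$ via the natural scaling of the weight. The change of variable $x = Ry$ immediately yields
\begin{equation*}
\mu(B_R) = \int_{B_R}|x'|^{\theta_1}|x|^{\theta_2}\,dx = R^{n+\theta_1+\theta_2}\mu(B_1),
\end{equation*}
so the two-sided bound reduces to showing that $\mu(B_1)$ is a positive finite constant depending only on $n,\theta_1,\theta_2$. Positivity is immediate from the strict positivity of the integrand almost everywhere, so only finiteness is at stake.

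For finiteness I would introduce coordinates adapted to the anisotropic structure: write $x = (x', x_n)$, set $r = |x'|$, integrate out the $S^{n-2}$-directions of $x'$ (with the convention $\omega_0 = 2$ for $n=2$), and then pass to polar coordinates $r = \rho\cos\phi$, $x_n = \rho\sin\phi$ in the half-plane $\{r\geq 0\}$. This separates variables and gives
\begin{equation*}
\mu(B_1) \;=\; \omega_{n-2}\left(\int_0^1 \rho^{n-1+\theta_1+\theta_2}\,d\rho\right)\left(\int_{-\pi/2}^{\pi/2}(\cos\phi)^{n-2+\theta_1}\,d\phi\right),
\end{equation*}
where $\omega_{n-2}$ is the surface measure of $S^{n-2}$. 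The radial integral is finite iff $\theta_1+\theta_2>-n$, and the angular integral is finite iff $\theta_1>-(n-1)$, the only angular singularities sitting at $\phi=\pm\pi/2$. These two conditions are exactly what $(\theta_1,\theta_2)\in\mathcal{A}\cup\mathcal{B}$ encodes: $\theta_1>-(n-1)$ is imposed throughout, while $\theta_1+\theta_2>-n$ follows either from $\theta_2\geq 0$ combined with $\theta_1>-(n-1)>-n$ on $\mathcal{A}$, or from the explicit defining inequality of $\mathcal{B}$.

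Once $\mu(B_R)<\infty$ for every $R>0$, every compact subset of $\mathbb{R}^n$ has finite $\mu$-measure, which is enough to conclude that $d\mu$ is a Radon measure on $\mathbb{R}^n$. The claimed two-sided bound then follows from the scaling identity above with $C = \max\{\mu(B_1),\mu(B_1)^{-1}\}$; in fact one has the stronger equality $\mu(B_R) = \mu(B_1)\,R^{n+\theta_1+\theta_2}$. I do not foresee a serious obstacle; the one conceptual point that must be seen clearly is that the weight carries two distinct singularities — a line singularity along the $x_n$-axis produced by $|x'|^{\theta_1}$, and a point singularity at the origin produced by $|x|^{\theta_2}$ — and that they decouple cleanly in the $(\rho,\phi)$ coordinates above, giving precisely one integrability condition per singularity.
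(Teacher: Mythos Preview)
Your argument is correct and complete: the scaling identity reduces the two-sided bound to the finiteness of $\mu(B_1)$, and your separation into radial and angular integrals via $(r,x_n)=(\rho\cos\phi,\rho\sin\phi)$ cleanly isolates the two integrability conditions $\theta_1+\theta_2>-n$ and $\theta_1>-(n-1)$, which together are precisely $(\theta_1,\theta_2)\in\mathcal{A}\cup\mathcal{B}$.

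There is nothing to compare against here: the paper does not prove this lemma but merely quotes it from \cite{MZ202301} (Lemma~2.1 there). Your self-contained computation is exactly the kind of direct verification one would expect for such a result, and in fact yields the sharper equality $\mu(B_R)=\mu(B_1)R^{n+\theta_1+\theta_2}$ rather than just a two-sided bound.
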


Based on Lemma \ref{LEM860} and using the proof of Lemma 2.3 in \cite{MZ202303} with a slight modification, we have the following switch lemma from the measures $w_{1}dxdt$ to $w_{2}dxdt.$
\begin{lemma}[see Lemma 2.3 in \cite{MZ202303}]\label{lem09}
Let $(\theta_{1},\theta_{2}),(\theta_{3},\theta_{4})\in\mathcal{A}\cup\mathcal{B}$ and $\theta_{3}+\min\{0,\theta_{4}\}>1-n$. Then there exists some constant $C_{0}=C_{0}(n,p,\theta_{1},\theta_{2},\theta_{3},\theta_{4})>0$ such that for any $\varepsilon,\rho\in(0,1/2]$, $m,\tau\in(0,\infty)$ and $\widetilde{E}\subset Q(m\rho,\tau\rho^{p+\vartheta})$, if
\begin{align*}
\frac{|\widetilde{E}|_{\nu_{w_{1}}}}{|Q(m\rho,\tau\rho^{p+\vartheta})|_{\nu_{w_{1}}}}\leq\varepsilon^{\beta},
\end{align*}
then
\begin{align*}
\frac{|\widetilde{E}|_{\nu_{w_{2}}}}{|Q(m\rho,\tau\rho^{p+\vartheta})|_{\nu_{w_{2}}}}\leq C_{0}\varepsilon^{n-1+\theta_{3}+\min\{0,\theta_{4}\}},
\end{align*}
where $\beta=n-1+\theta_{3}+\min\{0,\theta_{4}\}+\max\{0,\theta_{1}-\theta_{3}\}+\max\{0,\theta_{2}-\theta_{4}\}$, the measures $\nu_{w_{i}}$, $i=1,2$ are defined by \eqref{MEA01}.

\end{lemma}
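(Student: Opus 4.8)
The plan is to deduce Lemma~\ref{lem09} from the measure comparison in Lemma~\ref{LEM860} by a dyadic decomposition of $\widetilde E$ with respect to the distance to the singular line $\{x'=0\}$ (and, when relevant, to the origin), trading the small mass of $\widetilde E$ in the $w_1$-measure for a power of $\varepsilon$ in the $w_2$-measure. First I would reduce the time variable out of the picture: since $\widetilde E\subset Q(m\rho,\tau\rho^{p+\vartheta})$, slicing in $t$ and using Fubini shows that it suffices to prove the corresponding spatial statement, namely that for $E\subset B_{m\rho}$ with $|E|_{\mu_{w_1}}/|B_{m\rho}|_{\mu_{w_1}}\le\varepsilon^\beta$ one has $|E|_{\mu_{w_2}}/|B_{m\rho}|_{\mu_{w_2}}\le C_0\varepsilon^{\,n-1+\theta_3+\min\{0,\theta_4\}}$, with constants uniform in the radius; the $\rho^{p+\vartheta}$ factor in the cylinder cancels identically between numerator and denominator on both sides. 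By the scaling $\mu_{w_i}(B_R)\sim R^{n+\theta_i'}$ from Lemma~\ref{LEM860} (writing $\theta_1+\theta_2$ resp.\ $\theta_3+\theta_4$ for the exponents), I may further normalize to $m\rho=1$.

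Next comes the core estimate. Split $B_1$ into the dyadic annular-tube regions $S_j=\{x\in B_1:2^{-j-1}<|x'|\le 2^{-j}\}$ for $j\ge0$; on $S_j$ the weights are comparable to $2^{-j\theta_1}|x|^{\theta_2}$ and $2^{-j\theta_3}|x|^{\theta_4}$ respectively, so the ratio of the two weights on $S_j$ is comparable to $2^{-j(\theta_3-\theta_1)}$ times $|x|^{\theta_4-\theta_2}$, and one controls $|x|^{\theta_4-\theta_2}$ by a further (coarser) dyadic split in $|x|$ when $\theta_2\neq\theta_4$, or directly bounds it on $B_1$ otherwise. Writing $E_j=E\cap S_j$, the hypothesis gives $\sum_j |E_j|_{\mu_{w_1}}\le C\varepsilon^\beta$, and on each piece the isotropic-weight change-of-measure (Lemma~\ref{LEM860} applied in the $(n-1)$-dimensional $x'$-slices, using $\theta_3+\min\{0,\theta_4\}>1-n$ to guarantee integrability of $w_2$ near the line) yields a bound of the form $|E_j|_{\mu_{w_2}}\le C\,2^{j(\,\cdot\,)}\big(2^{j(\,\cdot\,)}|E_j|_{\mu_{w_1}}\big)^{(n-1+\theta_3+\min\{0,\theta_4\})/\beta}$, i.e.\ a Hölder-type inequality in the slice with the right exponent baked into $\beta$. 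The definition $\beta=n-1+\theta_3+\min\{0,\theta_4\}+\max\{0,\theta_1-\theta_3\}+\max\{0,\theta_2-\theta_4\}$ is precisely what makes the geometric series in $j$ (and in the $|x|$-dyadic index) converge: the two $\max$-terms absorb the exponents $2^{j(\theta_1-\theta_3)}$ and the analogous $|x|$-power that appear when passing between the two weights, so that after summing one is left with $\varepsilon$ raised exactly to $(n-1+\theta_3+\min\{0,\theta_4\})\cdot\beta/\beta=n-1+\theta_3+\min\{0,\theta_4\}$, up to a universal constant $C_0$.

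I expect the main obstacle to be bookkeeping the exponents so that the $\max$-terms in $\beta$ are matched correctly against the worst-case behavior of the weight ratio near $\{x'=0\}$ and near $0$ simultaneously — in particular keeping track of which of $\theta_1\gtrless\theta_3$ and $\theta_2\gtrless\theta_4$ is operative (these are exactly the cases selected by $\max\{0,\theta_1-\theta_3\}$ and $\max\{0,\theta_2-\theta_4\}$, and they are consistent with the monotonicity constraints in (K2)), and making sure the summed geometric series are genuinely convergent in every case rather than merely formally so. A secondary technical point is the uniformity of all constants in $m,\tau,\rho$: this is handled by the homogeneity of Lemma~\ref{LEM860} together with the observation that the two cylinder volumes scale with the same $\rho^{p+\vartheta}$ and compatible powers of $m$, so no dependence on those parameters survives. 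Since the paper explicitly says this is "the proof of Lemma 2.3 in \cite{MZ202303} with a slight modification," I would present the argument in the streamlined dyadic form above and remark that the modification relative to \cite{MZ202303} is only the insertion of the second weight pair $(w_1,w_2)$ in place of a single weight, the algebra of $\beta$ being unchanged.
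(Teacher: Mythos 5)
Your high-level plan is pointing in the right direction: the lemma is proved by separating the part of $\widetilde E$ away from the singular set $\{x'=0\}$, where the ratio $w_2/w_1$ is controlled pointwise, from the part inside a thin tube around $\{x'=0\}$, whose entire $w_2$-mass is small, and this is exactly where the three pieces of $\beta$ come from. But the central step in your write-up does not hold up: you claim that Lemma~\ref{LEM860} ``applied in the $(n-1)$-dimensional $x'$-slices'' produces a H\"older-type inequality
$|E_j|_{\mu_{w_2}}\le C\,2^{j(\cdot)}\big(2^{j(\cdot)}|E_j|_{\mu_{w_1}}\big)^{(n-1+\theta_3+\min\{0,\theta_4\})/\beta}$.
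Lemma~\ref{LEM860} gives nothing of this form; it is a two-sided volume estimate for \emph{balls}, not a comparison of the two weighted measures of an arbitrary set. On a dyadic shell $S_j=\{2^{-j-1}<|x'|\le 2^{-j}\}$ the correct relation is essentially \emph{linear}, $|E_j|_{\mu_{w_2}}\lesssim 2^{j(\theta_1-\theta_3)}\cdot\sup_{B_1}|x|^{\theta_4-\theta_2}\cdot|E_j|_{\mu_{w_1}}$, because the $|x'|$-factor of each weight is constant to within a factor of two on $S_j$; to make a dyadic sum converge one then has to \emph{also} use the trivial cap $|E_j|_{\mu_{w_2}}\le|S_j|_{\mu_{w_2}}$, and split the sum at $j$ with $2^{-j}\sim\varepsilon$. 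Your exponent $(n-1+\theta_3+\min\{0,\theta_4\})/\beta$ never appears at the shell level; it only emerges after summing and invoking the hypothesis $|\widetilde E|_{\nu_{w_1}}\le\varepsilon^\beta|Q|_{\nu_{w_1}}$. As written, the ``Hölder inequality in the slice'' is a gap, and the subsequent geometric-series bookkeeping is built on it.

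The streamlined argument does not need dyadics at all. Write $G=\{|x'|>\varepsilon m\rho\}$. On $\widetilde E\cap (G\times\mathbb R)$ one has $\varepsilon m\rho<|x'|\le|x|\le m\rho$, hence
\begin{align*}
\frac{w_2}{w_1}=|x'|^{\theta_3-\theta_1}|x|^{\theta_4-\theta_2}
\le \varepsilon^{-\max\{0,\theta_1-\theta_3\}-\max\{0,\theta_2-\theta_4\}}(m\rho)^{\theta_3+\theta_4-\theta_1-\theta_2},
\end{align*}
so by the hypothesis and Lemma~\ref{LEM860},
\begin{align*}
|\widetilde E\cap(G\times\mathbb R)|_{\nu_{w_2}}
\le \varepsilon^{-\max\{0,\theta_1-\theta_3\}-\max\{0,\theta_2-\theta_4\}}(m\rho)^{\theta_3+\theta_4-\theta_1-\theta_2}|\widetilde E|_{\nu_{w_1}}
\le C\varepsilon^{n-1+\theta_3+\min\{0,\theta_4\}}|Q(m\rho,\tau\rho^{p+\vartheta})|_{\nu_{w_2}},
\end{align*}
using $|Q|_{\nu_{w_1}}\le C\tau\rho^{p+\vartheta}(m\rho)^{n+\theta_1+\theta_2}$ and $|Q|_{\nu_{w_2}}\ge c\tau\rho^{p+\vartheta}(m\rho)^{n+\theta_3+\theta_4}$. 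On the complementary tube a direct integration in cylindrical coordinates (splitting once more at $|x|\lessgtr\varepsilon m\rho$ when $\theta_4<0$, and using $\theta_3+\min\{0,\theta_4\}>1-n$ for convergence near the line) gives $|B_{m\rho}\cap\{|x'|\le\varepsilon m\rho\}|_{\mu_{w_2}}\le C\varepsilon^{n-1+\theta_3+\min\{0,\theta_4\}}(m\rho)^{n+\theta_3+\theta_4}$, which after multiplying by $\tau\rho^{p+\vartheta}$ dominates $|\widetilde E\setminus(G\times\mathbb R)|_{\nu_{w_2}}$ regardless of $\widetilde E$. Adding the two pieces yields the claim, with all constants manifestly independent of $m,\tau,\rho,\varepsilon$. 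I'd recommend replacing the dyadic/Hölder heuristic with this one-cut computation; it is shorter and, more importantly, every inequality is actually justified by the tools the paper provides.

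Finally, a minor caution about your Fubini reduction: the hypothesis controls only the \emph{time-average} of $|\widetilde E(\cdot,t)|_{\mu_{w_1}}$, not each slice, so a naive slice-by-slice application of a spatial statement does not go through directly. It can be repaired via Jensen using concavity of $s\mapsto s^{(n-1+\theta_3+\min\{0,\theta_4\})/\beta}$, but the one-cut argument above sidesteps the issue entirely since the $\tau\rho^{p+\vartheta}$ factor cancels identically on both pieces.
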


For any $f\in C((-1,0];L^{p}(B_{1},w_{1}))\cap L^{r}((-1,0);L^{q}(B_{1},w_{2}))$ with $r,p,q>1$, we introduce the Steklov averages as follows: for $0<h\ll1$,
\begin{align*}
f_{h}(x,t)=
\begin{cases}
\frac{1}{h}\int^{t}_{t-h}f(x,s)ds,& t\in(-1+h,0),\\
0,&t\leq-1+h,
\end{cases}
\end{align*}
and
\begin{align*}
f_{\bar{h}}(x,t)=
\begin{cases}
\frac{1}{h}\int^{t+h}_{t}f(x,s)ds,& t\in(-1,-h),\\
0,&t\geq-h.
\end{cases}
\end{align*}
We are now ready to establish the following improved energy inequalities.
\begin{lemma}[Improved energy inequalities]\label{lem003}
Assume as in Theorems \ref{ZWTHM90} and \ref{THM060}. Suppose that $u$ is the solution to problem \eqref{PO001} with $\Omega_{T}=B_{1}\times(-1,0]$. Let $v_{\pm}:=(u-k)_{\pm}$ with $k\in\mathbb{R}$ and $[(x_{0},t_{0})+Q(\rho,\tau)]\subset B_{1}\times(-1,0]$. Then we obtain that

$(i)$ for any $\xi\in C^{\infty}([(x_{0},t_{0})+Q(\rho,\tau)])$ which vanishes on $\partial B_{\rho}(x_{0})\times(t_{0}-\tau,t_{0})$ and satisfies that $0\leq\xi\leq1$, and any $s\in(t_{0}-\tau,t_{0}),$
\begin{align}\label{ENE01}
&\int_{B_{\rho}(x_{0})}v_{\pm}^{2}\xi^{p}(x,s)w_{1}dx+\frac{\lambda_{1}}{3}\int_{B_{\rho}(x_{0})\times(t_{0}-\tau,s)}|\nabla(v_{\pm}\xi)|^{p}w_{2}dxdt\notag\\
&\leq\int_{B_{\rho}(x_{0})}v_{\pm}^{2}\xi^{p}(x,t_{0}-\tau)w_{1}dx\notag\\
&\quad+C\int_{[(x_{0},t_{0})+Q(\rho,\tau)]}\big(v_{\pm}^{2}|\partial_{t}\xi|w_{1}+v_{\pm}^{p}|\nabla\xi|^{p}w_{2}\big)dxdt\notag\\
&\quad+C\|\phi\|_{L^{l_{0}}(B_{1}\times(-1,0),w_{2})}|[(x_{0},t_{0})+Q(\rho,\tau)]\cap \{v_{\pm}>0\}|_{\nu_{w_{2}}}^{1-\frac{1}{l_{0}}};
\end{align}

$(ii)$ for any $\zeta\in C_{0}^{\infty}(B_{\rho}(x_{0}))$ with $0\leq\zeta\leq1$,
\begin{align}\label{ENE02}
&\int_{[(x_{0},t_{0})+Q(\rho,\tau)]}|\nabla(v_{\pm}\zeta)|^{p}w_{2}dxdt\notag\\
&\leq C\sup_{[(x_{0},t_{0})+Q(\rho,\tau)]}v_{\pm}^{p}\int_{[(x_{0},t_{0})+Q(\rho,\tau)]\cap \{v_{\pm}>0\}}(|\nabla\zeta|^{p}+\zeta^{p})w_{2}dxdt\notag\\
&\quad+C\|\phi\|_{L^{l_{0}}(B_{1}\times(-1,0),w_{2})}|[(x_{0},t_{0})+Q(\rho,\tau)]\cap \{v_{\pm}>0\}|_{\nu_{w_{2}}}^{1-\frac{1}{l_{0}}},
\end{align}
where $\phi=\phi_{1}+\phi_{2}^{\frac{p}{p-1}}+\phi_{3}^{\frac{p}{p-1}}$.

\end{lemma}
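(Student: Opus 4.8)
The plan is to derive both estimates from the weak formulation of \eqref{PO001} tested against suitable truncations of $v_\pm$, using Steklov averaging to make the time derivative rigorous. First I would work with the Steklov-averaged equation on $[(x_0,t_0)+Q(\rho,\tau)]$, use the test function $\varphi = \pm v_{\pm,h}\,\xi^p$ (for part $(i)$) or a related truncation localized by $\zeta$ (for part $(ii)$), integrate over $B_\rho(x_0)\times(t_0-\tau,s)$, and then let $h\to0$. The parabolic term $\int w_1 \partial_t u_h \cdot (\pm v_{\pm,h})\xi^p$ produces, after the standard manipulation, the difference $\tfrac12\int_{B_\rho(x_0)} v_\pm^2\xi^p w_1\,dx\big|_{t_0-\tau}^{s}$ plus a term $-\tfrac{1}{2}\int v_\pm^2 \partial_t(\xi^p) w_1$ that gets absorbed into the right-hand side $C\int v_\pm^2|\partial_t\xi| w_1$; here I use that $w_1$ is time-independent.

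The heart of the matter is controlling the elliptic term $\int w_2\, \mathbf{a}(x,t,u,\nabla u)\cdot\nabla(\pm v_{\pm}\xi^p)$. Expanding $\nabla(v_\pm\xi^p) = \xi^p\nabla v_\pm + p\,v_\pm\xi^{p-1}\nabla\xi$ and noting $\nabla v_\pm = \pm\nabla u$ on $\{v_\pm>0\}$, the hypothesis $(\mathbf{H1})$ gives the good term $\lambda_1\int \xi^p|\nabla v_\pm|^p w_2$ minus $\int\xi^p\phi_1 w_2$, while $(\mathbf{H2})$ bounds the cross term $\pm p\int w_2\,\mathbf{a}\cdot v_\pm\xi^{p-1}\nabla\xi$ by $C\int(|\nabla v_\pm|^{p-1}+\phi_2)v_\pm\xi^{p-1}|\nabla\xi|w_2$. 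The key point — this is what makes the inequality ``improved'' — is to apply Young's inequality so that the $|\nabla v_\pm|^{p-1}$ factor is absorbed into a \emph{fraction} of the good term: we write $|\nabla(v_\pm\xi)|^p \le 2^{p-1}(\xi^p|\nabla v_\pm|^p + v_\pm^p|\nabla\xi|^p)$, so it suffices to keep, say, $\tfrac{\lambda_1}{2}\int\xi^p|\nabla v_\pm|^p w_2$ after absorption, which after the $2^{p-1}$ conversion leaves the stated constant $\tfrac{\lambda_1}{3}$ in front of $\int|\nabla(v_\pm\xi)|^p w_2$ (the precise fraction is a bookkeeping choice; any fixed fraction works and I would just track constants carefully to land on $1/3$). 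The term $b\,\varphi\,w_2$ is handled by $(\mathbf{H3})$: the $\lambda_3|\nabla u|^{p-1}$ part contributes another cross term absorbed as above — here condition \eqref{CON01}, $\lambda_1\gg\lambda_3$, guarantees the absorption succeeds — and the $\phi_3$ part joins the $\phi_i$ terms.

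It remains to estimate $\int_{[(x_0,t_0)+Q(\rho,\tau)]\cap\{v_\pm>0\}} \phi\, w_2$, where $\phi = \phi_1+\phi_2^{p/(p-1)}+\phi_3^{p/(p-1)}$ collects all the inhomogeneities (after applying Young's inequality $\phi_2 v_\pm\xi^{p-1}|\nabla\xi| \le \epsilon|\nabla\xi|^p v_\pm^p + C\phi_2^{p/(p-1)}\xi^p$, and similarly for $\phi_3$, so the bounded factors $v_\pm$ and $\xi$ drop out up to the already-present right-hand side terms). By Hölder's inequality with exponents $l_0$ and $l_0/(l_0-1)$ against the measure $w_2\,dxdt$, this is bounded by $\|\phi\|_{L^{l_0}(B_1\times(-1,0),w_2)}\,|[(x_0,t_0)+Q(\rho,\tau)]\cap\{v_\pm>0\}|_{\nu_{w_2}}^{1-1/l_0}$, which is exactly the last term in \eqref{ENE01}. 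For part $(ii)$, I would instead test with $\varphi = \pm v_\pm\zeta^p$ over the full cylinder (no cutoff in time, or with a standard time cutoff that is then discarded since we only want the gradient bound), drop the nonnegative boundary term at time $t_0$, bound the parabolic term crudely by $C\sup v_\pm^p\int_{\{v_\pm>0\}}\zeta^p w_2$ — using that $v_\pm^2 \le \sup v_\pm^p \cdot v_\pm^{2-p}$ is not quite right, so more precisely one keeps $\int v_\pm^2|\partial_t\zeta^p|$-type terms under control by noting $v_\pm \le \sup v_\pm$ and absorbing powers — and then run the same elliptic estimate, yielding \eqref{ENE02}. The main obstacle is the careful constant-tracking through the nested Young inequalities needed to isolate the fraction $\lambda_1/3$ and to verify that the $\lambda_3$-absorption is consistent with \eqref{CON01}; the Steklov limit and the Hölder step are routine. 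I would also note that all cylinders here have $\tau$ of order $\rho^{p+\vartheta}$ in the applications, but the energy inequalities themselves hold for arbitrary admissible $(\rho,\tau)$, so no scaling enters at this stage.
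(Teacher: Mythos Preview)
Your outline for part $(i)$ follows the paper's plan, but you gloss over the one step that distinguishes this ``improved'' inequality from the standard one. After $(\mathbf{H3})$ and Young, the $b$-term produces
\[
\frac{C_1\lambda_3^p}{\lambda_1^{p-1}}\int v_\pm^p\,\xi^p\,w_2,
\]
which involves $\xi^p$, not $|\nabla\xi|^p$; it is \emph{not} of the form ``absorbed as above.'' The paper eliminates this term by invoking a weighted Poincar\'e inequality (Theorem~15.23 of \cite{HKM2006}), namely $\int(v_\pm\xi)^p w_2 \le C_2\int|\nabla(v_\pm\xi)|^p w_2$, so that under $\lambda_1\gg\lambda_3$ the coefficient $2^{p-1}C_2\lambda_3^p/\lambda_1^{p-1}$ is small enough to be absorbed into the good gradient term. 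Without this step the right-hand side of \eqref{ENE01} would retain an extra $\int v_\pm^p\xi^p w_2$ term; removing it is precisely the ``improvement'' (cf.\ Remark~\ref{REM09}).

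For part $(ii)$ your approach has a genuine gap. Testing with $\pm v_\pm\zeta^p$ (with $\zeta$ time-independent) leaves on the right-hand side the initial-time contribution $\tfrac12\int_{B_\rho(x_0)} v_\pm^2(x,t_0-\tau)\zeta^p\,w_1\,dx$, a spatial integral against $w_1$ that cannot be bounded by $C\sup v_\pm^p\int_{\{v_\pm>0\}}\zeta^p w_2\,dxdt$; a time cutoff does not help, since integrating $|\partial_t\eta|$ reproduces the same obstruction. The paper instead tests with the characteristic function $\varphi=\mp\mathbbm{1}_{\{(u_h-k)_\pm>0\}}\zeta^p$, which yields a bound on the \emph{linear} increment $\int_{B_\rho}(v_\pm(\cdot,t_0-\tau)-v_\pm(\cdot,s))\zeta^p w_1$ with right-hand side $C\int|\zeta\nabla v_\pm|^{p-1}(|\nabla\zeta|+\zeta)w_2+\ldots$ (see \eqref{ENE03}). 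Multiplying by $\sup v_\pm$ and applying Young then controls the quadratic increment $\int(v_\pm^2(\cdot,t_0-\tau)-v_\pm^2(\cdot,s))\zeta^p w_1$ by a small fraction of $\int|\nabla(v_\pm\zeta)|^p w_2$ plus the desired $\sup v_\pm^p\int(|\nabla\zeta|^p+\zeta^p)w_2$; this is fed back into part $(i)$ with $\xi=\zeta$ to cancel the initial-time term. The indicator-function test is the missing idea in your outline.
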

\begin{remark}\label{REM09}
By adding the restricted condition in \eqref{CON01}, we derive the improved upper bound in \eqref{ENE01} by getting rid of the term $|\xi|^{p}$ in Lemma 3.1 of \cite{MZ202303}, which is a necessary condition for the establishment of global $L^{\infty}$ estimate in Theorem \ref{CORO06}. Moreover, we see from \eqref{MQ98} below that the assumed condition of $\lambda_{1}\gg\lambda_{3}$ in \eqref{CON01} can be quantitatively described as follows:
\begin{align*}
\lambda_{1}\geq C\lambda_{3},\quad\text{for some large constant }C=C(n,p,\theta_{3},\theta_{4})>0.
\end{align*}
 
\end{remark}

\begin{remark}
For any $\zeta\in C^{\infty}_{0}(B_{\rho}(x_{0}))$ with $0\leq\zeta\leq1$, we achieve an important improvement in \eqref{ENE02} by removing the term $\int_{B_{\rho}(x_{0})}(v_{\pm}^{2}(x,t_{0}-\tau)-v_{\pm}^{2}(x,s))\zeta^{p}(x)w_{1}dx$ in \eqref{ENE01} with any $s\in(t_{0}-\tau,t_{0})$, which is critical to the establishment of the decay estimates in Lemma \ref{lem005} below.

\end{remark}

\begin{remark}\label{RE06}
Let the considered domain $[(x_{0},t_{0})+Q(\rho,\tau)]$ become $Q(1,1)$ and choose $(\pm u-k)_{+}$ with $k\geq\|\psi\|_{L^{\infty}(\partial_{pa}Q(1,1))}$. Then we have $(\pm u-k)_{+}=0$ on $\partial_{pa}Q(1,1)$, which implies that \eqref{ENE01} holds for any $\xi\in C^{\infty}(Q(1,1))$ which may not vanish on $\partial B_{1}\times(-1,0)$.
\end{remark}

\begin{proof}[Proof of Lemma \ref{lem003}]
Assume without loss of generality that $(x_{0},t_{0})=(0,0)$.

{\bf Step 1.} In order to prove \eqref{ENE01}, we see from the proof of Lemma 3.1 in \cite{MZ202303} that it suffices to demonstrate that for any $-\tau\leq s\leq0$,
\begin{align}\label{O90}
&\int_{-\tau}^{s}\int_{B_{\rho}}\pm b(x,t,u,\nabla u)(u-k)_{\pm}\xi^{p}w_{2}\notag\\
&\leq \frac{\lambda_{1}}{6}\int_{B_{\rho}\times(-\tau,s)}|\nabla(u-k)_{\pm}|^{p}\xi^{p}w_{2}+\frac{\lambda_{1}}{12}\int_{B_{\rho}\times(-\tau,s)}(u-k)_{\pm}^{p}|\nabla\xi|^{p}w_{2}\notag\\
&\quad+C\int_{(B_{\rho}\times(-\tau,s))\cap\{(u-k)_{\pm}>0\}}\phi_{3}^{\frac{p}{p-1}}w_{2}.
\end{align}
Observe from $\mathrm{(}\mathbf{H3}\mathrm{)}$ and Young's inequality that
\begin{align}\label{DE90}
&\int_{-\tau}^{s}\int_{B_{\rho}}\pm b(x,t,u,\nabla u)(u-k)_{\pm}\xi^{p}w_{2}\notag\\
&\leq\lambda_{3}\int_{B_{\rho}\times(-\tau,s)}|\nabla(u-k)_{\pm}|^{p-1}(u-k)_{\pm}\xi^{p}w_{2}+\int_{B_{\rho}\times(-\tau,s)}(u-k)_{\pm}\phi_{3}\xi^{p}w_{2}\notag\\
&\leq \frac{\lambda_{1}}{12}\int_{B_{\rho}\times(-\tau,s)}|\nabla(u-k)_{\pm}|^{p}\xi^{p}w_{2}+\frac{C_{1}\lambda_{3}^{p}}{\lambda_{1}^{p-1}}\int_{B_{\rho}\times(-\tau,s)}(u-k)_{\pm}^{p}\xi^{p}w_{2}\notag\\
&\quad+C\int_{(B_{\rho}\times(-\tau,s))\cap\{(u-k)_{\pm}>0\}}\phi_{3}^{\frac{p}{p-1}}w_{2},
\end{align}
where $C_{1}=C_{1}(p)$. According to the assumed condition of $\lambda_{1}\gg\lambda_{3}$ in \eqref{CON01} and using Theorem 15.23 in \cite{HKM2006}, we obtain
\begin{align}\label{MQ98}
&\frac{C_{1}\lambda_{3}^{p}}{\lambda_{1}^{p-1}}\int_{B_{\rho}\times(-\tau,s)}((u-k)_{\pm}\xi)^{p}w_{2}dxdt\notag\\
&\leq \frac{C_{2}\lambda_{3}^{p}}{\lambda_{1}^{p-1}}\int_{B_{\rho}\times(-\tau,s)}|\nabla((u-k)_{\pm}\xi)|^{p}w_{2}dxdt\notag\\
&\leq\frac{2^{p-1}C_{2}\lambda_{3}^{p}}{\lambda_{1}^{p-1}}\int_{B_{\rho}\times(-\tau,s)}\big(|\nabla(u-k)_{\pm}|^{p}\xi^{p}+(u-k)_{\pm}^{p}|\nabla\xi|^{p}\big)w_{2}dxdt\notag\\
&\leq\frac{\lambda_{1}}{12}\int_{B_{\rho}\times(-\tau,s)}\big(|\nabla(u-k)_{\pm}|^{p}\xi^{p}+(u-k)_{\pm}^{p}|\nabla\xi|^{p}\big)w_{2}dxdt,
\end{align}
where $C_{2}=C_{2}(n,p,\theta_{3},\theta_{4})$. Inserting this into \eqref{DE90}, we obtain that \eqref{O90} holds. Then applying \eqref{O90} to the proof of Lemma 3.1 in \cite{MZ202303}, we see that \eqref{ENE01} holds.

{\bf Step 2.} It remains to prove \eqref{ENE02}. For $0<h\ll1$, let $\varphi=\mp\mathbbm{1}_{\{(u_{h}-k)_{\pm}>0\}}\zeta^{p}$,
where
\begin{align*}
\mathbbm{1}_{\{(u_{h}-k)_{\pm}>0\}}(x,t):=
\begin{cases}
1,&\text{if }(x,t)\in\{(u_{h}-k)_{\pm}>0\},\\
0,&\text{otherwise}.
\end{cases}
\end{align*}
Choosing the test function $\varphi_{\bar{h}}$, we obtain that for any $-\tau\leq s\leq 0$,
\begin{align}\label{WM01}
&\int_{B_{\rho}}u\varphi_{\bar{h}} w_{1}dx\Big|_{-\tau}^{s}+\int_{-\tau}^{s}\int_{B_{\rho}}(-u\partial_{t}\varphi_{\bar{h}} w_{1}+w_{2}\mathbf{a}(x,t,u,\nabla u)\cdot\nabla\varphi_{\bar{h}})dxdt\notag\\
&=\int_{-\tau}^{s}\int_{B_{\rho}}b(x,t,u,\nabla u)\varphi_{\bar{h}} w_{2}dxdt,
\end{align}
which leads to that
\begin{align*}
&\int_{-\tau}^{s}\int_{B_{\rho}}(\partial_{t}u_{h}\varphi w_{1}+w_{2}[\mathbf{a}(x,t,u,\nabla u)]_{h}\cdot\nabla\varphi)dxdt\notag\\
&=\int_{-\tau}^{s}\int_{B_{\rho}}[b(x,t,u,\nabla u)]_{h}\varphi w_{2}dxdt.
\end{align*}
Then integrating by parts and using Lemma 2.2 in \cite{MZ202303}, we obtain
\begin{align*}
&\int_{-\tau}^{s}\int_{B_{\rho}}\partial_{t}u_{h}\varphi w_{1}dxdt=-\int_{-\tau}^{s}\int_{B_{\rho}}\partial_{t}(u_{h}-k)_{\pm}\zeta^{p}w_{1}dxdt\notag\\
&=\int_{B_{\rho}}((u_{h}-k)_{\pm}(x,-\tau)-(u_{h}-k)_{\pm}(x,s))\zeta^{p}(x)w_{1}dx\notag\\
%&\quad+p\int_{B_{\rho}\times(-\tau,s)}(u_{h}-k)_{\pm}\xi^{p-1}\partial_{t}\xi w_{1}dxdt\notag\\
&\rightarrow\int_{B_{\rho}}((u-k)_{\pm}(x,-\tau)-(u-k)_{\pm}(x,s))\zeta^{p}(x)w_{1}dx,\quad\text{as }h\rightarrow0.
\end{align*}
As for the remaining two terms in \eqref{WM01}, by first letting $h\rightarrow0$ and then utilizing the structure conditions in $\mathrm{(}\mathbf{H1}\mathrm{)}$--$\mathrm{(}\mathbf{H3}\mathrm{)}$, it follows from Young's inequality and Lemma 2.2 that
\begin{align*}
&\int_{B_{\rho}\times(-\tau,s)}[\mathbf{a}(x,t,u,\nabla u)]_{h}\cdot\nabla\varphi w_{2}dxdt\notag\\
&\rightarrow\mp p\int_{(B_{\rho}\times(-\tau,s))\cap\{(u-k)_{\pm}>0\}}\zeta^{p-1}\mathbf{a}(x,t,u,\nabla u)\cdot\nabla\zeta w_{2}dxdt\notag\\
&\leq\lambda_{2}p\int_{B_{\rho}\times(-\tau,s)}|\nabla(u-k)_{\pm}|^{p-1}\zeta^{p-1}|\nabla\zeta|w_{2}dxdt\notag\\
&\quad+p\int_{(B_{\rho}\times(-\tau,s))\cap\{(u-k)_{\pm}>0\}}\phi_{2}\zeta^{p-1}|\nabla\zeta|w_{2}dxdt,
\end{align*}
and
\begin{align*}
&\int_{B_{\rho}\times(-\tau,s)}[b(x,t,u,\nabla u)]_{h}\varphi w_{2}dxdt\notag\\
&\rightarrow\mp\int_{(B_{\rho}\times(-\tau,s))\cap\{(u-k)_{\pm}>0\}} b(x,t,u,\nabla u)\zeta^{p}w_{2}dxdt\notag\\
&\leq\lambda_{3}\int_{B_{\rho}\times(-\tau,s)}|\nabla(u-k)_{\pm}|^{p-1}\zeta^{p}w_{2}+\int_{(B_{\rho}\times(-\tau,s))\cap\{(u-k)_{\pm}>0\}}\phi_{3}\zeta^{p}w_{2}.
\end{align*}
Therefore, a consequence of these above facts shows that for any $-\tau\leq s\leq0$,
\begin{align}\label{ENE03}
&\int_{B_{\rho}}(v_{\pm}(x,-\tau)-v_{\pm}(x,s))\zeta^{p}(x)w_{1}dx\notag\\
&\leq C\int_{Q(\rho,\tau)}|\zeta\nabla v_{\pm}|^{p-1}(|\nabla\zeta|+\zeta)w_{2}dxdt\notag\\
&\quad+C\int_{Q(\rho,\tau)\cap\{v_{\pm}>0\}}(\phi_{2}|\nabla\zeta|+\phi_{3}\zeta)\zeta^{p-1}w_{2}dxdt,
\end{align}
where $v_{\pm}=(u-k)_{\pm}$. Making use of \eqref{ENE03}, we have from Young's inequality, H\"{o}lder's inequality and Lemma 2.1 in \cite{MZ202303} that for any $-\tau\leq s\leq0$,
\begin{align}\label{WE900}
&\int_{B_{\rho}}(v_{\pm}^{2}(x,-\tau)-v^{2}_{\pm}(x,s))\zeta^{p}(x)w_{1}dx\notag\\
&\leq 2\sup\limits_{Q(\rho,\tau)}v_{\pm}\int_{B_{\rho}}(v_{\pm}(x,-\tau)-v_{\pm}(x,s))\zeta^{p}(x)w_{1}dx\notag\\
&\leq C\sup\limits_{Q(\rho,\tau)}v_{\pm}\int_{Q(\rho,\tau)}|\zeta\nabla v_{\pm}|^{p-1}(|\nabla\zeta|+\zeta)w_{2}dxdt\notag\\
&\quad+C\sup\limits_{Q(\rho,\tau)}v_{\pm}\int_{Q(\rho,\tau)\cap\{v_{\pm}>0\}}(\phi_{2}|\nabla\zeta|+\phi_{3}\zeta)\zeta^{p-1}w_{2}dxdt\notag\\
&\leq \frac{\lambda_{1}}{12}\int_{Q(\rho,\tau)}|\zeta\nabla v_{\pm}|^{p}w_{2}dxdt+C\sup\limits_{Q(\rho,\tau)}v^{p}_{\pm}\int_{Q(\rho,\tau)\cap\{v_{\pm}>0\}}(|\nabla\zeta|^{p}+\zeta^{p})w_{2}dxdt\notag\\
&\quad+C\int_{Q(\rho,\tau)\cap\{v_{\pm}>0\}}(\phi_{2}^{\frac{p}{p-1}}+\phi_{3}^{\frac{p}{p-1}})w_{2}dxdt\notag\\
&\leq \frac{\lambda_{1}}{6}\int_{Q(\rho,\tau)}|\nabla(v_{\pm}\zeta)|^{p}w_{2}dxdt+C\sup\limits_{Q(\rho,\tau)}v^{p}_{\pm}\int_{Q(\rho,\tau)\cap\{v_{\pm}>0\}}(|\nabla\zeta|^{p}+\zeta^{p})w_{2}dxdt\notag\\
&\quad+C\|\phi\|_{L^{l_{0}}(B_{1}\times(-1,0),w_{2})}|Q(\rho,\tau)\cap \{v_{\pm}>0\}|_{\nu_{w_{2}}}^{1-\frac{1}{l_{0}}},
\end{align}
where $\phi=\phi_{1}+\phi_{2}^{\frac{p}{p-1}}+\phi_{3}^{\frac{p}{p-1}}.$ Therefore, a combination of \eqref{ENE01} and \eqref{WE900} leads to that
\begin{align*}
&\frac{\lambda_{1}}{3}\int_{Q(\rho,\tau)}|\nabla(v_{\pm}\zeta)|^{p}w_{2}dxdt\notag\\
&\leq\sup_{s\in(-\tau,0)}\int_{B_{\rho}}(v_{\pm}^{2}(x,-\tau)-v^{2}_{\pm}(x,s))\zeta^{p}(x)w_{1}dx+C\int_{Q(\rho,\tau)}v_{\pm}^{p}|\nabla\zeta|^{p}w_{2}dxdt\notag\\
&\quad+C\|\phi\|_{L^{l_{0}}(B_{1}\times(-1,0),w_{2})}|Q(\rho,\tau)\cap \{v_{\pm}>0\}|_{\nu_{w_{2}}}^{1-\frac{1}{l_{0}}}\notag\\
&\leq \frac{\lambda_{1}}{6}\int_{Q(\rho,\tau)}|\nabla(v_{\pm}\zeta)|^{p}w_{2}dxdt+C\sup\limits_{Q(\rho,\tau)}v^{p}_{\pm}\int_{Q(\rho,\tau)\cap\{v_{\pm}>0\}}(|\nabla\zeta|^{p}+\zeta^{p})w_{2}dxdt\notag\\
&\quad+C\|\phi\|_{L^{l_{0}}(B_{1}\times(-1,0),w_{2})}|Q(\rho,\tau)\cap \{v_{\pm}>0\}|_{\nu_{w_{2}}}^{1-\frac{1}{l_{0}}},
\end{align*}
which implies that \eqref{ENE02} holds.

\end{proof}

We next list anisotropically weighted isoperimetric inequality and Sobolev embedding theorem. These two tools are critical to the implementation for the De Giorgi truncation method.

\begin{lemma}[see Lemma 2.3 in \cite{MZ202303}]\label{prop002}
For $n\geq2$ and $1<p<\infty$, let $(\theta_{1},\theta_{2})\in[(\mathcal{A}\cup\mathcal{B})\cap(C_{p}\cup\mathcal{D}_{p})]\cup\{\theta_{1}=0,\,\theta_{2}\geq n(p-1)\}$. Then there exists some constant $1<q=q(n,p,\theta_{1},\theta_{2})<p$ such that for any $R>0$, $l>k$ and $u\in W^{1,q}(B_{R},w_{1})$,
\begin{align}\label{pro001}
&(l-k)^{q}\bigg(\int_{\{u\geq l\}\cap B_{R}}w_{1}dx\bigg)^{q}\int_{\{u\leq k\}\cap B_{R}}w_{1}dx\notag\\
&\leq C(n,p,\theta_{1},\theta_{2})R^{q(n+\theta_{1}+\theta_{2}+1)}\int_{\{k<u<l\}\cap B_{R}}|\nabla u|^{q}w_{1}dx,
\end{align}
and
\begin{align}\label{pro002}
&(l-k)^{q}\bigg(\int_{\{u\leq k\}\cap B_{R}}w_{1}dx\bigg)^{q}\int_{\{u\geq l\}\cap B_{R}}w_{1}dx\notag\\
&\leq C(n,p,\theta_{1},\theta_{2})R^{q(n+\theta_{1}+\theta_{2}+1)}\int_{\{k<u<l\}\cap B_{R}}|\nabla u|^{q}w_{1}dx,
\end{align}
where $w_{1}=|x'|^{\theta_{1}}|x|^{\theta_{2}}dx.$
\end{lemma}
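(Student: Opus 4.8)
The plan is to reduce to a normalized form by scaling, truncate, and then run the classical De Giorgi isoperimetric argument with the ordinary Poincar\'e estimates replaced by weighted ones. Combining the scaling $u\mapsto u(R\,\cdot)$ (using that $w_{1}$ is homogeneous of degree $\theta_{1}+\theta_{2}$ and $\mu(B_{R})\simeq R^{n+\theta_{1}+\theta_{2}}$ by Lemma \ref{LEM860}, where $d\mu:=w_{1}\,dx$) with the trivial bound $\int_{\{u\geq l\}\cap B_{R}}w_{1}\,dx\leq\mu(B_{R})\leq CR^{n+\theta_{1}+\theta_{2}}$, it suffices to prove, for $R=1$,
\begin{align*}
(l-k)^{q}\int_{\{u\geq l\}\cap B_{1}}w_{1}\,dx\int_{\{u\leq k\}\cap B_{1}}w_{1}\,dx\leq C\int_{\{k<u<l\}\cap B_{1}}|\nabla u|^{q}w_{1}\,dx,
\end{align*}
the general $R$ and the extra power on $\int_{\{u\geq l\}}w_{1}$ being restored from this. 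Replacing $u$ by its truncation $v:=\min\{l,\max\{u,k\}\}\in W^{1,q}(B_{1},w_{1})$, which equals $l$ on $A_{l}:=\{u\geq l\}\cap B_{1}$, equals $k$ on $A_{k}:=\{u\leq k\}\cap B_{1}$ and has $\nabla v=\mathbbm{1}_{\{k<u<l\}}\nabla u$, reduces matters to bounding $(l-k)^{q}|A_{l}|_{\mu}|A_{k}|_{\mu}$ by $C\int_{B_{1}}|\nabla v|^{q}w_{1}\,dx$. Finally \eqref{pro002} is just \eqref{pro001} applied to $-u$ with $(-l,-k)$ in place of $(k,l)$, so only \eqref{pro001} has to be proved.

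In the generic range $(\theta_{1},\theta_{2})\in(\mathcal{A}\cup\mathcal{B})\cap(\mathcal{C}_{p}\cup\mathcal{D}_{p})$ the two memberships say exactly that $w_{1}$ and $w_{1}^{-1/(p-1)}$ are locally integrable with the natural volume growth, i.e.\ $w_{1}$ is a Muckenhoupt $A_{p}$ weight; I would fix an exponent $q=q(n,p,\theta_{1},\theta_{2})\in(1,p)$ with $w_{1}\in A_{q}$ (any $q<p$ with $\theta_{1}<(n-1)(q-1)$ and $\theta_{1}+\theta_{2}<n(q-1)$ works, and such $q$ exist precisely because $\mathcal{C}_{p}\cup\mathcal{D}_{p}$ holds). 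Then $w_{1}^{-1/(q-1)}\in L^{1}(B_{1})$, so H\"older's inequality gives $W^{1,q}(B_{1},w_{1})\hookrightarrow W^{1,1}(B_{1})$; in particular $v\in W^{1,1}(B_{1})$ and, $B_{1}$ being convex, $v(x)-v(y)=\int_{0}^{1}\nabla v(y+t(x-y))\cdot(x-y)\,dt$ for a.e.\ $x,y$. For $x\in A_{l}$, $y\in A_{k}$ this gives $l-k\leq|x-y|\int_{0}^{1}|\nabla v(y+t(x-y))|\,dt$, and integrating over $A_{l}$ in $x$ against $w_{1}(x)\,dx$ and over $A_{k}$ in $y$ against $w_{1}(y)\,dy$ produces the product on the left,
\begin{align*}
(l-k)|A_{l}|_{\mu}|A_{k}|_{\mu}\leq\int_{A_{l}}\int_{A_{k}}|x-y|\int_{0}^{1}|\nabla v(y+t(x-y))|\,dt\;w_{1}(y)\,dy\,w_{1}(x)\,dx.
\end{align*}
By H\"older in $(x,y)$ and Jensen in $t$ the right-hand side is at most $C(|A_{l}|_{\mu}|A_{k}|_{\mu})^{1-1/q}\big(\int_{A_{l}}\int_{A_{k}}\int_{0}^{1}|\nabla v(y+t(x-y))|^{q}\,dt\,w_{1}(x)w_{1}(y)\,dx\,dy\big)^{1/q}$, so everything reduces to the weighted potential bound
\begin{align*}
\int_{A_{l}}\int_{A_{k}}\int_{0}^{1}|\nabla v(y+t(x-y))|^{q}\,dt\,w_{1}(x)w_{1}(y)\,dx\,dy\leq C\int_{B_{1}}|\nabla v|^{q}w_{1}\,dx,
\end{align*}
after which dividing and taking $q$-th powers closes the generic case. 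This last estimate I would prove by splitting $\int_{0}^{1}=\int_{0}^{1/2}+\int_{1/2}^{1}$, changing variables $z=tx+(1-t)y$ (freezing $y,t$ on the first piece and $x,t$ on the second) so that the inner integral sees $\nabla v(z)$, and absorbing the displaced weight $w_{1}$ into the ambient one by the doubling and $A_{q}$ properties of $w_{1}$ furnished by Lemma \ref{LEM860}, followed by Fubini.

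It remains to handle the exceptional range $\theta_{1}=0$, $\theta_{2}\geq n(p-1)$, where $w_{1}=|x|^{\theta_{2}}$ is not an $A_{p}$ weight, $w_{1}^{-1/(q-1)}$ is non-integrable for every $q<p$, and the truncation need not lie in $W^{1,1}(B_{1})$; there I would pass to polar coordinates and combine a one-dimensional weighted Hardy-type inequality in the radial variable with the ordinary Sobolev inequality on the spheres, which for a purely radial power weight remains valid throughout this range and again yields the De Giorgi-type estimate for $v$. I expect the main obstacle to be precisely these two weighted estimates: in the unweighted situation the change of variables in the segment integral is harmless, whereas here one must keep track of $w_{1}$ evaluated at the far endpoint of the segment — which is exactly where the anisotropic Muckenhoupt structure cut out by $\mathcal{C}_{p}\cup\mathcal{D}_{p}$, and in the exceptional range the sharp radial computation, genuinely enter — and one must moreover track the exponents so that the power of $R$ comes out as $q(n+\theta_{1}+\theta_{2}+1)$ via the volume growth of Lemma \ref{LEM860}.
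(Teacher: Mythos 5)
Two preliminary points. First, the paper does not prove this lemma; it is imported verbatim from \cite{MZ202303}, so there is no internal argument for you to match — you can only be judged on whether your sketch closes on its own terms. Second, your front end is sound: the reduction to $R=1$ by the homogeneity of $w_1$, the trivial bound $|A_l|_\mu\leq\mu(B_1)\leq C$ to pass from $|A_l|_\mu$ to $|A_l|_\mu^q$, the truncation $v=\min\{l,\max\{u,k\}\}$, the observation that \eqref{pro002} is \eqref{pro001} for $-u$, and the choice of $q<p$ so that $w_1^{-1/(q-1)}\in L^1(B_1)$ (hence $v\in W^{1,1}(B_1)$) are all correct and match the usual De Giorgi set-up. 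One small imprecision: local integrability of $w_1$ and $w_1^{-1/(p-1)}$ does not, for a general weight, amount to the Muckenhoupt $A_p$ condition (which is a uniform estimate over all balls); for the specific product weights $|x'|^{\theta_1}|x|^{\theta_2}$ the two do coincide, but that is a fact about these weights (cf.\ \cite{MZ202302,LY2023}), not an ``i.e.''.

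The genuine gap is in the step you yourself flag as ``the main obstacle''. After your H\"older/Jensen step the problem reduces, as you say, to
\begin{align*}
\int_{A_l}\int_{A_k}\int_0^1|\nabla v((1-t)y+tx)|^q\,dt\,w_1(x)w_1(y)\,dx\,dy\ \leq\ C\int_{B_1}|\nabla v|^q w_1\,dz,
\end{align*}
and your proposal is to freeze one endpoint and $t$, substitute $z=(1-t)y+tx$, and ``absorb the displaced weight by doubling and $A_q$''. But after the substitution (say on $t\in[1/2,1]$, freezing $y$) the kernel sitting in front of $|\nabla v(z)|^q$ is $\int_{A_k}\int_{1/2}^1 t^{-n}w_1\!\left(\tfrac{z-(1-t)y}{t}\right)\mathbbm{1}_{A_l}\!\left(\tfrac{z-(1-t)y}{t}\right)w_1(y)\,dt\,dy$, and this is not bounded by $Cw_1(z)$: for a degenerate weight (say $\theta_1,\theta_2>0$) and $z\to0$ with $A_l,A_k$ at distance $\sim1$ from the origin, the kernel stays bounded away from zero while $w_1(z)\to0$, so the ratio blows up. Doubling of $\mu$ (Lemma \ref{LEM860}) controls averages of $w_1$ over comparable balls, not the value of $w_1$ at a point displaced by $O(1)$; it cannot repair a pointwise kernel bound that genuinely fails. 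The correct closing move in the $A_q$ range is not a change of variables at all but a weighted operator bound: one derives the pointwise Riesz-potential estimate $|v(x)-v_{B_1,\mu}|\leq C\int_{B_1}|\nabla v(z)|\,|z-x|^{1-n}dz$ (which is unweighted) and then invokes the $L^q(w_1)\to L^q(w_1)$ boundedness of the truncated fractional integral / maximal operator for $A_q$ weights; that is where the full Muckenhoupt condition, not merely doubling plus dual integrability, is used. As written, your sketch does not contain this step and the route you describe does not produce it. Finally, the exceptional range $\theta_1=0,\ \theta_2\geq n(p-1)$ — where $w_1\notin A_p$, $W^{1,q}(B_1,w_1)\not\hookrightarrow W^{1,1}(B_1)$, and the segment identity is not even available — is only gestured at (``polar coordinates plus a radial Hardy inequality''); since this is a separately listed case in the hypothesis and the mechanism is entirely different, it needs to be carried out, not merely named.
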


We now establish the desired anisotropically weighted Sobolev inequality as follows.
\begin{prop}\label{prop001}
For $n\geq2$, $1<p\leq2$, $R>0$, $\theta_{1}+\theta_{2}>p-n$, let $u\in L^{2}(B_{R},w_{1})\cap W_{0}^{1,p}(B_{R},w_{2})$. Then
\begin{align*}
\int_{B_{R}}|u|^{p\tilde{\chi}}w_{2}dx\leq C(n,p,\theta_{1},\theta_{2})\int_{B_{R}}|\nabla u|^{p}w_{2}dx\left(\int_{B_{R}}|u|^{2}w_{1}dx\right)^{\chi-1},
\end{align*}
where $\chi$ and $\tilde{\chi}$ are given by
\begin{align}\label{chi}
\chi=\frac{n+p+\theta_{1}+\theta_{2}}{n+\theta_{1}+\theta_{2}},\quad \tilde{\chi}=\frac{n+\theta_{1}+\theta_{2}+2}{n+\theta_{1}+\theta_{2}}.
\end{align}

\end{prop}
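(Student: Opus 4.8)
The plan is to derive this weighted Sobolev inequality by interpolating a weighted Sobolev embedding of Gagliardo--Nirenberg type against the weighted $L^2$ norm, following the standard strategy for proving parabolic-type Sobolev inequalities. First I would record the basic weighted Sobolev--Poincar\'e inequality available for the measure $w_2\,dx=|x'|^{\theta_3}|x|^{\theta_4}dx$: under the hypotheses $(\theta_3,\theta_4)\in\mathcal{A}\cup\mathcal{B}$ (which hold by $(\mathbf{K1})$) together with $\theta_1+\theta_2>p-n$, one has for $u\in W_0^{1,p}(B_R,w_2)$ a bound of the form $\big(\int_{B_R}|u|^{p\kappa}w_2\,dx\big)^{1/\kappa}\le C R^{?}\int_{B_R}|\nabla u|^p w_2\,dx$ for a suitable Sobolev exponent $\kappa>1$ dictated by the homogeneous dimension $n+\theta_1+\theta_2$ (this is exactly the Sobolev companion to Lemma \ref{prop002}; its proof uses Lemma \ref{LEM860} for the measure growth and the isoperimetric-type inequalities of Lemma \ref{prop002}). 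The key arithmetic is that the correct Sobolev exponent associated to a space of ``dimension'' $d:=n+\theta_1+\theta_2$ and gradient exponent $p$ is $\tfrac{dp}{d-p}$, so $\kappa=\tfrac{d}{d-p}$; one must check $p<d$, which is precisely the assumption $\theta_1+\theta_2>p-n$, and that $1<p\le 2$ keeps everything subcritical.

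Next I would interpolate. Write $p\tilde\chi$ as a convex combination of the Sobolev exponent $p\kappa$ and the exponent $2$ (against the weight $w_1$ in the lower-order slot), i.e. find $\lambda\in(0,1)$ with
\begin{align*}
p\tilde\chi=\lambda\, p\kappa+(1-\lambda)\cdot 2,
\end{align*}
and split the integral $\int_{B_R}|u|^{p\tilde\chi}w_2\,dx$ via H\"older's inequality so that the high-integrability factor is controlled by the weighted Sobolev inequality above and the remaining factor is $\big(\int_{B_R}|u|^2 w_1\,dx\big)^{\chi-1}$ up to the gradient term. Here one has to be careful that the two pieces carry different weights $w_2$ and $w_1$; the assumption $\theta_1+\theta_2\ge\theta_3+\theta_4$ (part of $(\mathbf{K2})$) and $w_1=|x'|^{\theta_1}|x|^{\theta_2}$, $w_2=|x'|^{\theta_3}|x|^{\theta_4}$ let one absorb the discrepancy on $B_R$, possibly at the cost of a power of $R$; since the final statement allows a constant $C(n,p,\theta_1,\theta_2)$ with no $R$-dependence, I would track the $R$-powers and verify they cancel exactly when the exponents \eqref{chi} are plugged in. Solving the exponent bookkeeping should pin down $\lambda$ and the powers $\chi-1$ and $\tilde\chi$, reproducing the stated formulas $\chi=\tfrac{n+p+\theta_1+\theta_2}{n+\theta_1+\theta_2}$, $\tilde\chi=\tfrac{n+\theta_1+\theta_2+2}{n+\theta_1+\theta_2}$.

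Concretely, the steps in order: (1) establish/quote the weighted $W_0^{1,p}$ Sobolev inequality on $B_R$ with exponent $p\kappa$, $\kappa=\tfrac{n+\theta_1+\theta_2}{n+\theta_1+\theta_2-p}$, including the sharp $R$-scaling; (2) determine $\lambda$ from the exponent identity above and split $|u|^{p\tilde\chi}=|u|^{\lambda p\kappa}|u|^{(1-\lambda)2}$; (3) apply H\"older with exponents $1/\lambda$ and $1/(1-\lambda)$, yielding $\big(\int|u|^{p\kappa}w_2\big)^{\lambda}\big(\int|u|^{2}w_2\big)^{1-\lambda}$ modulo weight juggling; (4) replace $w_2$ by $w_1$ in the second factor using $\theta_1+\theta_2\ge\theta_3+\theta_4$ (and $\theta_1\ge\theta_3$) on $B_R$; (5) invoke step (1) on the first factor; (6) collect $R$-powers and confirm cancellation, reading off $\chi-1$ and $\tilde\chi$. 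The main obstacle I anticipate is step (1): producing the weighted Sobolev embedding with the \emph{correct} exponent and scaling purely from the tools in the excerpt (Lemmas \ref{LEM860} and \ref{prop002}) rather than from a black-box Muckenhoupt--Sobolev theorem — one may need a truncation/level-set (Maz'ya-type) argument feeding the isoperimetric inequalities \eqref{pro001}--\eqref{pro002} into a layer-cake summation — together with the bookkeeping in step (4)--(6) to make the weight mismatch between $w_1$ and $w_2$ harmless on the ball and to verify the $R$-independence of the final constant.
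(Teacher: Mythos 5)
Your high-level plan (a weighted Sobolev embedding combined with a H\"older interpolation) is the right general shape, but two of the concrete steps as written do not go through, and they are precisely where the paper's argument does something smarter.

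First, in your step (1) you want a Sobolev inequality of the form $\big(\int_{B_R}|u|^{p\kappa}w_2\,dx\big)^{1/\kappa}\lesssim\int_{B_R}|\nabla u|^p w_2\,dx$ with $\kappa$ dictated by the ``dimension'' $d=n+\theta_1+\theta_2$. This is a mismatch: the homogeneous dimension attached to the measure $w_2\,dx=|x'|^{\theta_3}|x|^{\theta_4}dx$ is $n+\theta_3+\theta_4$, not $n+\theta_1+\theta_2$, so there is no reason for such an inequality to hold with $w_2$ on both sides and $\kappa=\tfrac{d}{d-p}$. The inequality the paper actually quotes is the anisotropic Caffarelli--Kohn--Nirenberg inequality of Li--Yan \cite{LY2023}, which takes the form
\begin{align*}
\left(\int_{B_R}|u|^{\frac{pd}{d-p}}\,|x'|^{\frac{\theta_3 d-p\theta_1}{d-p}}|x|^{\frac{\theta_4 d-p\theta_2}{d-p}}\,dx\right)^{\frac{d-p}{d}}
\leq C\int_{B_R}|\nabla u|^p\,|x'|^{\theta_3}|x|^{\theta_4}\,dx,
\end{align*}
i.e.\ the left-hand weight is a specific blend of $(\theta_1,\theta_2)$ and $(\theta_3,\theta_4)$, not $w_2$ itself. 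That blend is exactly what makes the exponent $\kappa=\tfrac{d}{d-p}$ with $d=n+\theta_1+\theta_2$ legitimate, and it is not something you can derive from Lemmas~\ref{LEM860} and~\ref{prop002} alone (those give isoperimetric inequalities for $w_1$, not a Sobolev embedding with a mixed left-hand weight).

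Second, your step (4), ``replace $w_2$ by $w_1$ in the second factor using $\theta_1\geq\theta_3$, $\theta_1+\theta_2\geq\theta_3+\theta_4$,'' does not hold pointwise in the direction you need. On $B_R$ with $R\leq 1$ and $\theta_1>\theta_3$, one has $w_1/w_2=|x'|^{\theta_1-\theta_3}|x|^{\theta_2-\theta_4}\to 0$ as $x'\to 0$, so $w_2\leq Cw_1$ fails; you cannot bound $\int|u|^2w_2$ by $\int|u|^2w_1$. The paper avoids this entirely: instead of holding the weight fixed and then trying to trade $w_2$ for $w_1$, it \emph{splits the weight at the H\"older stage}, writing $w_2=\big(|x'|^{\theta_3-\theta_1(\chi-1)}|x|^{\theta_4-\theta_2(\chi-1)}\big)\cdot w_1^{\chi-1}$ and pairing the first piece with $|u|^p$ and the second with $|u|^{2(\chi-1)}$. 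H\"older with exponents $\tfrac{1}{2-\chi}$ and $\tfrac{1}{\chi-1}$ (which, you can check, coincide with your $1/\lambda$ and $1/(1-\lambda)$) then produces the $w_1$-weighted $L^2$ factor on the nose, with no pointwise comparison of weights required, and the remaining factor is exactly the left-hand side of the CKN inequality above. Also note the stated Proposition uses only $\theta_1+\theta_2>p-n$, not $(\mathbf{K2})$, so an argument leaning on $\theta_1\geq\theta_3$ would in any case be proving a weaker statement. If you replace steps (1) and (3)--(4) by the CKN inequality with the mixed weight and the weight-splitting H\"older step, the bookkeeping you outline in steps (2) and (6) does close and reproduces \eqref{chi}; the constant is $R$-independent because the CKN inequality as stated in \cite{LY2023} is already scale-invariant.
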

\begin{remark}
It is worth emphasizing that the form of the above Sobolev inequality is distinct from that in Proposition 2.4 of \cite{MZ202303} and has advantages in simplifying the proof for the oscillation improvement of the solution in Lemma \ref{LEM0035} below by avoiding the similar variable substitution adopted in Lemma 4.8 of \cite{MZ202303}.

\end{remark}

\begin{proof}
Utilizing the anisotropic Caffarelli-Kohn-Nirenberg inequality in \cite{LY2023}, we derive that for any $u\in W_{0}^{1,p}(B_{R},w_{2})$,
\begin{align*}
&\left(\int_{B_{R}}|u|^{\frac{p(n+\theta_{1}+\theta_{2})}{n+\theta_{1}+\theta_{2}-p}}|x'|^{\frac{\theta_{3}(n+\theta_{1}+\theta_{2})-p\theta_{1}}{n+\theta_{1}+\theta_{2}-p}}|x|^{\frac{\theta_{4}(n+\theta_{1}+\theta_{2})-p\theta_{2}}{n+\theta_{1}+\theta_{2}-p}}dx\right)^{\frac{n+\theta_{1}+\theta_{2}-p}{n+\theta_{1}+\theta_{2}}}\notag\\
&\leq C\int_{B_{R}}|\nabla u|^{p}|x'|^{\theta_{3}}|x|^{\theta_{4}}dx.
\end{align*}
It then follows from H\"{o}lder's inequality that
\begin{align*}
&\int_{B_{R}}|u|^{p\tilde{\chi}}|x'|^{\theta_{3}}|x|^{\theta_{4}}dx\notag\\
&=\int_{B_{R}}|u|^{p}|x'|^{\theta_{3}-\theta_{1}(\chi-1)}|x|^{\theta_{4}-\theta_{2}(\chi-1)}|u|^{2(\chi-1)}|x'|^{\theta_{1}(\chi-1)}|x|^{\theta_{2}(\chi-1)}dx\notag\\
&\leq\left(\int_{B_{R}}|u|^{\frac{p}{2-\chi}}|x'|^{\frac{\theta_{3}-\theta_{1}(\chi-1)}{2-\chi}}|x|^{\frac{\theta_{4}-\theta_{2}(\chi-1)}{2-\chi}}dx\right)^{2-\chi}\left(\int_{B_{R}}|u|^{2}|x'|^{\theta_{1}}|x|^{\theta_{2}}dx\right)^{\chi-1}\notag\\
&\leq C\int_{B_{R}}|\nabla u|^{p}|x'|^{\theta_{3}}|x|^{\theta_{4}}dx\left(\int_{B_{R}}|u|^{2}|x'|^{\theta_{1}}|x|^{\theta_{2}}dx\right)^{\chi-1}.
\end{align*}
The proof is complete.

\end{proof}

A consequence of \eqref{ENE01}, Remark \ref{RE06} and Proposition \ref{prop001} gives the following global $L^{\infty}$ estimate.
\begin{theorem}\label{CORO06}
Assume as in Theorems \ref{ZWTHM90} and \ref{THM060}. Let $u$ be the weak solution of \eqref{PO001} with $\Omega_{T}=Q(1,1)$. Then we have
\begin{align*}
\|u\|_{L^{\infty}(Q(1,1))}\leq\|\psi\|_{L^{\infty}(\partial_{pa}Q(1,1))}+C\|\phi\|_{L^{l_{0}}(Q(1,1))}^{\frac{n+p+\theta_{1}+\theta_{2}}{p(n+\theta_{1}+\theta_{2}+2)}},
\end{align*}
where $\phi$ is given by \eqref{ZE90}.
\end{theorem}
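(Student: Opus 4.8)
The plan is to carry out a single De~Giorgi level-set iteration directly on the cylinder $Q(1,1)$, feeding the improved energy inequality \eqref{ENE01} into the anisotropic Sobolev inequality of Proposition~\ref{prop001}. The structural point that makes this work is that, by \eqref{CON01}, the right-hand side of \eqref{ENE01} carries \emph{no} lower-order term of the form $C\int v_\pm^p\xi^p w_2$ (cf.\ Remark~\ref{REM09}), while by Remark~\ref{RE06} the cutoff $\xi$ need not vanish on $\partial B_1\times(-1,0)$ once the truncation level is at least $\|\psi\|_{L^\infty(\partial_{pa}Q(1,1))}$. Taking $\xi\equiv1$ then wipes out every $\nabla\xi$, $\partial_t\xi$ and initial-time contribution, leaving only the $\|\phi\|$-term; this is exactly the feature of \eqref{ENE01} that a successful $L^\infty$-iteration needs.

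Set $M:=\|\psi\|_{L^\infty(\partial_{pa}Q(1,1))}$, $\Phi:=\|\phi\|_{L^{l_0}(Q(1,1))}$, and for a constant $d>0$ to be chosen put $k_j:=M+d(1-2^{-j})$, $j\ge0$. We show $u\le M+d$ a.e.; the bound $u\ge-(M+d)$ follows by the identical argument applied to $(u-(-k_j))_-$. Let $v_j:=(u-k_j)_+$ and $Y_j:=|Q(1,1)\cap\{u>k_j\}|_{\nu_{w_2}}$. Since $k_j\ge M$, $v_j=0$ on $\partial_{pa}Q(1,1)$, so by Remark~\ref{RE06} inequality \eqref{ENE01} applies on $Q(1,1)$ with $\xi\equiv1$ and its initial-time term vanishes; with $\nabla\xi\equiv\partial_t\xi\equiv0$ it reduces to
\[
\sup_{-1<s<0}\int_{B_1}v_j^2(x,s)\,w_1\,dx+\int_{Q(1,1)}|\nabla v_j|^p w_2\,dxdt\le C\,\Phi\,Y_j^{1-\frac1{l_0}}.
\]
For a.e.\ $t$, $v_j(\cdot,t)\in L^2(B_1,w_1)\cap W^{1,p}_0(B_1,w_2)$, so Proposition~\ref{prop001} with $R=1$, integrated over $t\in(-1,0)$ after extracting the supremum of the $L^2(w_1)$-factor, gives
\[
\int_{Q(1,1)}v_j^{p\tilde\chi}w_2\,dxdt\le C\Big(\Phi\,Y_j^{1-\frac1{l_0}}\Big)^{\chi-1}\Phi\,Y_j^{1-\frac1{l_0}}=C\,\Phi^{\chi}\,Y_j^{\chi(1-\frac1{l_0})}.
\]
As $v_j>d\,2^{-(j+1)}$ on $\{u>k_{j+1}\}$, the left-hand side is at least $(d\,2^{-(j+1)})^{p\tilde\chi}Y_{j+1}$, hence
\[
Y_{j+1}\le \frac{C\,2^{p\tilde\chi(j+1)}}{d^{p\tilde\chi}}\,\Phi^{\chi}\,Y_j^{\chi(1-\frac1{l_0})}.
\]

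Now \eqref{E01} is precisely the inequality $\chi\big(1-\frac1{l_0}\big)>1$, so the iteration exponent exceeds $1$, and $Y_0\le|Q(1,1)|_{\nu_{w_2}}=|B_1|_{\mu_{w_2}}\le C$ by Lemma~\ref{LEM860} applied with $(\theta_3,\theta_4)\in\mathcal{A}\cup\mathcal{B}$. The standard fast geometric convergence lemma (e.g.\ Lemma~4.1, Chapter~I of \cite{D1993}) then yields $Y_j\to0$ as soon as $d^{p\tilde\chi}\ge C\Phi^{\chi}$ for a sufficiently large data-dependent constant $C$, i.e.\ $d\ge C\Phi^{\chi/(p\tilde\chi)}$. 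By \eqref{chi}, $\chi/(p\tilde\chi)=\frac{n+p+\theta_1+\theta_2}{p(n+\theta_1+\theta_2+2)}$, so the choice $d:=C\,\Phi^{(n+p+\theta_1+\theta_2)/(p(n+\theta_1+\theta_2+2))}$ forces $|Q(1,1)\cap\{u>M+d\}|_{\nu_{w_2}}=\lim_j Y_j=0$, and since $w_2>0$ a.e.\ this gives $u\le M+d$ a.e.; combining with the symmetric lower bound yields the stated estimate (the case $\Phi=0$ being immediate from the displayed energy inequality with $k_0=M$). The computations are entirely routine; the only points that genuinely matter are (i) the admissibility of the constant cutoff $\xi\equiv1$, which is exactly what the improvement in \eqref{ENE01} together with Remark~\ref{RE06} deliver, and (ii) the exponent bookkeeping, i.e.\ that $\chi/(p\tilde\chi)$ is the power appearing in the statement and that $\chi(1-\frac1{l_0})>1$ is equivalent to \eqref{E01}. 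I do not anticipate any obstacle beyond this bookkeeping.
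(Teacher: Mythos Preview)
Your proof is correct and follows essentially the same approach as the paper's: both take $\xi\equiv1$ in \eqref{ENE01} via Remark~\ref{RE06}, apply Proposition~\ref{prop001}, and run the same De~Giorgi iteration with the superlinear exponent $\chi(1-1/l_0)>1$ coming from \eqref{E01}. The only cosmetic difference is that the paper expands the iteration by hand whereas you invoke the standard fast geometric convergence lemma; the choice of level constant and the exponent bookkeeping are identical.
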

\begin{proof}
For $i\geq0$, set $k_{i}=\|\psi\|_{L^{\infty}(\partial_{pa}Q(1,1))}+M-\frac{M}{2^{i}}$, where $M$ is to be determined later. By picking $\xi\equiv1$ in \eqref{ENE01}, we have from Remark \ref{RE06} that
\begin{align*}
&\sup\limits_{t\in(-1,0)}\int_{B_{1}}(u-k_{i})_{+}^{2}w_{1}dx+\frac{\lambda_{1}}{3}\int_{Q(1,1)}|\nabla(u-k)_{+}|^{p}w_{2}dxdt\notag\\
&\leq C\|\phi\|_{L^{l_{0}}(Q(1,1))}|Q(1,1)\cap\{u>k_{i}\}|_{\nu_{w_{2}}}^{1-\frac{1}{l_{0}}},
\end{align*}
which, in combination with Proposition \ref{prop001}, reads that
\begin{align*}
&\int_{Q(1,1)}|(u-k_{i})_{+}|^{p\tilde{\chi}}w_{2}dxdt\notag\\
&\leq C\int_{Q(1,1)}|\nabla(u-k)_{+}|^{p}w_{2}dxdt\bigg(\sup\limits_{t\in(-1,0)}\int_{B_{1}}(u-k_{i})_{+}^{2}w_{1}dx\bigg)^{\chi-1}\notag\\ &\leq C\|\phi\|^{\chi}_{L^{l_{0}}(Q(1,1))}|Q(1,1)\cap\{u>k_{i}\}|_{\nu_{w_{2}}}^{\frac{\chi(l_{0}-1)}{l_{0}}},
\end{align*}
where $\chi$ and $\tilde{\chi}$ are defined by \eqref{chi}. For brevity, write
$A_{i}:=Q(1,1)\cap\{u>k_{i}\}$. Hence we deduce
\begin{align*}
|A_{i+1}|_{\nu_{w_{2}}}\leq&\Bigg(\frac{C\|\phi\|_{L^{l_{0}}(Q(1,1))}2^{\frac{p\tilde{\chi}(i+1)}{\chi}}}{M^{\frac{p\tilde{\chi}}{\chi}}}\Bigg)^{\chi}|A_{i}|_{\nu_{w_{2}}}^{\frac{\chi(l_{0}-1)}{l_{0}}}\notag\\
\leq&\prod_{s=0}^{i}\left[\Bigg(\frac{C\|\phi\|_{L^{l_{0}}(Q(1,1))}2^{\frac{p\tilde{\chi}(i+1)}{\chi}}}{M^{\frac{p\tilde{\chi}}{\chi}}}\Bigg)^{\frac{l_{0}}{l_{0}-1}}\right]^{\big(\frac{\chi(l_{0}-1)}{l_{0}}\big)^{s+1}}|A_{0}|_{\nu_{w_{2}}}^{\big(\frac{\chi(l_{0}-1)}{l_{0}}\big)^{i+1}}\notag\\
\leq&\Bigg[\bigg(\frac{\overline{C}\|\phi\|_{L^{l_{0}}(Q(1,1))}}{M^{\frac{p\tilde{\chi}}{\chi}}}\bigg)^{\frac{l_{0}\chi}{\chi(l_{0}-1)-l_{0}}}|A_{0}|_{\nu_{w_{2}}}\Bigg]^{\big(\frac{\chi(l_{0}-1)}{l_{0}}\big)^{i+1}},
\end{align*}
where $\frac{\chi(l_{0}-1)}{l_{0}}>1$ in virtue of \eqref{E01}. Fix
\begin{align*}
M=\big(\overline{C}\|\phi\|_{L^{l_{0}}(Q(1,1))}\big)^{\frac{\chi}{p\tilde{\chi}}}(2|Q(1,1)|_{\nu_{w_{2}}})^{\frac{\chi(l_{0}-1)-l_{0}}{pl_{0}\tilde{\chi}}}.
\end{align*}
Then we obtain
\begin{align*}
|A_{i+1}|_{\nu_{w_{2}}}\leq2^{-\big(\frac{\chi(l_{0}-1)}{l_{0}}\big)^{i+1}}\rightarrow0,\quad\text{as }i\rightarrow\infty.
\end{align*}
This yields that
\begin{align*}
\sup_{Q(1,1)}u\leq\|\psi\|_{L^{\infty}(\partial_{pa}Q(1,1))}+C\|\phi\|_{L^{l_{0}}(Q(1,1))}^{\frac{\chi}{p\tilde{\chi}}}.
\end{align*}
Using the above arguments for the equation of $-u$, we finish the proof.
\end{proof}

\section{Local behavior for weak solutions }\label{SEC03}

Inspired by intrinsic scaling technique developed in \cite{D1993,CD1988,D1986}, in the following we need to find suitably rescaled cylinders whose dimensions accommodate the singularity and degeneracy of the weights and equations for the purpose of applying the De Giorgi truncation method to the considered weighted parabolic equations.

Set
\begin{align*}
\bar{\varepsilon}_{0}:=\frac{\varepsilon_{0}(2-p)}{p+\vartheta},
\end{align*}
where $\varepsilon_{0}$ is defined by \eqref{VAR01}. Since $1<p<2$, we see that for any $R\in(0,\frac{1}{2}]$ and $t_{0}\in[-\frac{1}{2},0]$,
\begin{align*}
[(0,t_{0})+Q(R^{1-\bar{\varepsilon}_{0}},R^{p+\vartheta})]\subset Q(1,1).
\end{align*}
Up to a translation, we assume that $(0,t_{0})=(0,0).$ Define
\begin{align*}
%\label{W08}
\mu^{+}=\sup\limits_{Q(R^{1-\bar{\varepsilon}_{0}},R^{p+\vartheta})}u,\quad\mu^{-}=\inf\limits_{Q(R^{1-\bar{\varepsilon}_{0}},R^{p+\vartheta})}u,
\end{align*}
and
\begin{align*}
%\label{W09}
\omega=\mathop{osc}\limits_{Q(R^{1-\bar{\varepsilon}_{0}},R^{p+\vartheta})}u=\mu^{+}-\mu^{-}.
\end{align*}
Introduce the cylinder as follows:
\begin{align}\label{Z999}
Q(a_{0}R,R^{p+\vartheta}),\quad a_{0}:=\Big(\frac{\sigma\omega}{A}\Big)^{\frac{p-2}{p+\vartheta}},
\end{align}
where $\sigma\in(0,1)$ and $A\in(1,\infty)$ are two positive constants to be chosen later, whose values depend only on the data. Remark that $a_{0}$ is called intrinsic scaling factor, since it makes the rescaled cylinders $Q(a_{0}R,R^{p+\vartheta})$ reflect the singular and degenerate natures exhibited by the weights and equations. In the unweighted case of $w_{1}=w_{2}=1$, the exponent $\frac{p-2}{p}$ of the scaling factor was revealed in previous work \cite{CD1988,D1993}. By contrast, the situation is quite different in the weighted case and the exponent becomes $\frac{p-2}{p+\vartheta}$. This is caused by the fact that the scaling factor is applied to the space-intrinsic geometry such that its form must match the stretching of the weights on the space.

Note that one of the following two inequalities must hold: either $\omega\leq \sigma^{-1}AR^{\varepsilon_{0}}$, or $\omega>\sigma^{-1}AR^{\varepsilon_{0}}$. If $\omega>\sigma^{-1}AR^{\varepsilon_{0}}$, then
\begin{align*}
Q(a_{0}R,R^{p+\vartheta})\subset Q(R^{1-\bar{\varepsilon}_{0}},R^{p+\vartheta}),\quad \mathop{osc}\limits_{Q(a_{0}R,R^{p+\vartheta})}u\leq\omega.
\end{align*}
For simplicity, let
\begin{align}\label{M01}
m:=\Big(\frac{\sigma\omega}{\mathcal{M}_{0}}\Big)^{\frac{p-2}{p+\vartheta}},\quad\mathcal{M}_{0}=2\|u\|_{L^{\infty}(Q(1,1))}+1,
\end{align}
which implies that $m\geq1$. When $A\geq\mathcal{M}_{0}$, we have $Q(mR,R^{p+\vartheta})\subset Q(a_{0}R,R^{p+\vartheta})$. Moreover, since $\frac{p+\vartheta}{2-p}>\varepsilon_{0}$ for $1<p<2$, we obtain that for any $R\in(0,1]$,
\begin{align}\label{M03}
mR\leq1,\quad\mathrm{if}\;\omega\geq\sigma^{-1}\mathcal{M}_{0}R^{\varepsilon_{0}}.
\end{align}

\subsection{Three main ingredients in the De Giorgi truncation method.}
We first improve the oscillation of $u$ in a small domain as follows.
\begin{lemma}\label{LEM0035}
Let $\sigma,m,\mathcal{M}_{0}$ be defined by \eqref{Z999}--\eqref{M01}. Then there exists a small constant $\gamma_{0}\in(0,1)$ depending only upon the above data and independent of $\sigma$ such that

$(i)$ if
\begin{align}\label{ZWZ007}
|Q(mR,R^{p+\vartheta})\cap\{u>\mu^{+}-\sigma\omega\}|_{\nu_{w_{1}}}\leq\gamma_{0}|Q(mR,R^{p+\vartheta})|_{\nu_{w_{1}}},
\end{align}
then we have either $\omega\leq \sigma^{-1}\mathcal{M}_{0}R^{\varepsilon_{0}}$, or
\begin{align}\label{DZ001}
u\leq \mu^{+}-\frac{\sigma\omega}{2},\quad\mathrm{in}\text{ $Q(mR/2,(R/2)^{p+\vartheta})$;}
\end{align}

$(ii)$ if
\begin{align*}
|Q(mR,R^{p+\vartheta})\cap\{u<\mu^{-}+\sigma\omega\}|_{\nu_{w_{1}}}\leq\gamma_{0}|Q(mR,R^{p+\vartheta})|_{\nu_{w_{1}}},
\end{align*}
then we have either $\omega\leq \sigma^{-1}\mathcal{M}_{0}R^{\varepsilon_{0}}$, or
\begin{align}\label{ZWZ009}
u\geq \mu^{-}+\frac{\sigma\omega}{2},\quad\mathrm{in}\text{ $Q(mR/2,(R/2)^{p+\vartheta})$.}
\end{align}

\end{lemma}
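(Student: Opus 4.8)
\emph{Proof plan.} The plan is to run a De Giorgi ``shrinking the supremum'' iteration inside the intrinsically rescaled cylinder $Q(mR,R^{p+\vartheta})$ from \eqref{Z999}--\eqref{M01}, whose scaling factor $m=(\sigma\omega/\mathcal{M}_{0})^{(p-2)/(p+\vartheta)}$ is tuned exactly so that the parabolic homogeneity lost to the inhomogeneity $p<2$ and to the anisotropic weights is recovered. I will prove $(i)$ only; $(ii)$ follows by applying the same scheme to $-u$ (equivalently $(u-k)_{-}$, $\mu^{-}$, with the obvious sign changes). First I would dispose of the trivial alternative: if $\omega\le\sigma^{-1}\mathcal{M}_{0}R^{\varepsilon_{0}}$ there is nothing to prove, so I assume $\omega>\sigma^{-1}\mathcal{M}_{0}R^{\varepsilon_{0}}$; by \eqref{M03} this forces $mR\le1$, so all cylinders below lie in $Q(1,1)$, where $\mu^{-}\le u\le\mu^{+}$, and Lemma \ref{LEM860} applies at scale $mR$.

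Next I would set up the iteration. For $j\ge0$ put $k_{j}=\mu^{+}-\frac{\sigma\omega}{2}-\frac{\sigma\omega}{2^{j+1}}$ (so $k_{0}=\mu^{+}-\sigma\omega$ and $k_{j}\nearrow\mu^{+}-\sigma\omega/2$), and $Q_{j}:=Q(\rho_{j},\tau_{j})$ with $\rho_{j}=\frac{mR}{2}(1+2^{-j})$, $\tau_{j}=(\frac{R}{2})^{p+\vartheta}(1+2^{-j})^{p+\vartheta}$, so that $Q_{0}=Q(mR,R^{p+\vartheta})$ and $\bigcap_{j}Q_{j}=Q(mR/2,(R/2)^{p+\vartheta})$. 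I would choose cut-offs $\xi_{j}\in C^{\infty}(Q_{j})$, $0\le\xi_{j}\le1$, with $\xi_{j}\equiv1$ on $Q_{j+1}$, vanishing on the lateral boundary $\partial B_{\rho_{j}}\times(-\tau_{j},0)$ \emph{and} at the bottom $\{t=-\tau_{j}\}$, with $|\nabla\xi_{j}|\le C2^{j}/(mR)$ and $|\partial_{t}\xi_{j}|\le C2^{j}/R^{p+\vartheta}$. Writing $v_{j}:=(u-k_{j})_{+}$, $A_{j}:=Q_{j}\cap\{u>k_{j}\}$ and $Y_{j}:=|A_{j}|_{\nu_{w_{1}}}/|Q_{j}|_{\nu_{w_{1}}}$, the hypothesis \eqref{ZWZ007} is precisely $Y_{0}\le\gamma_{0}$, and the goal becomes $Y_{j}\to0$, which would force $\bigcap_{j}A_{j}$ to be Lebesgue-null, i.e.\ \eqref{DZ001}.

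The heart of the matter is the recursion. I would apply the improved energy inequality \eqref{ENE01} to $v_{j}$ with cut-off $\xi_{j}$: the initial term drops because $\xi_{j}\equiv0$ at the bottom of $Q_{j}$; then using $0\le v_{j}\le\mu^{+}-k_{j}\le\sigma\omega$, Lemma \ref{LEM860}, and Lemma \ref{lem09} to turn the relative $\nu_{w_{2}}$-measure of $A_{j}$ into a power of $Y_{j}$, both $\sup_{t}\int_{B_{\rho_{j}}}v_{j}^{2}\xi_{j}^{p}w_{1}\,dx$ and $\int_{Q_{j}}|\nabla(v_{j}\xi_{j})|^{p}w_{2}$ get bounded by a single quantity $E_{j}$; the identity $m^{p+\vartheta}=(\sigma\omega/\mathcal{M}_{0})^{p-2}$ is exactly what makes the time contribution $(\sigma\omega)^{2}R^{-(p+\vartheta)}|A_{j}|_{\nu_{w_{1}}}$ and the gradient contribution $(\sigma\omega)^{p}(mR)^{-p}|A_{j}|_{\nu_{w_{2}}}$ collapse, after normalising, to the same scale-free expression up to a data constant and a power of $2^{j}$. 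Feeding these into Proposition \ref{prop001} applied to $v_{j}\xi_{j}(\cdot,t)$ for a.e.\ $t$ and integrating in $t$ would give
\begin{align*}
\int_{Q_{j}}(v_{j}\xi_{j})^{p\tilde{\chi}}w_{2}\,dxdt\le C\Big(\sup_{t}\int_{B_{\rho_{j}}}(v_{j}\xi_{j})^{2}w_{1}\,dx\Big)^{\chi-1}\int_{Q_{j}}|\nabla(v_{j}\xi_{j})|^{p}w_{2}\,dxdt\le CE_{j}^{\chi};
\end{align*}
since $v_{j}\ge k_{j+1}-k_{j}=\sigma\omega\,2^{-(j+2)}$ on $A_{j+1}$ and $\xi_{j}\equiv1$ on $Q_{j+1}$, the left-hand side is $\ge(\sigma\omega2^{-(j+2)})^{p\tilde{\chi}}|A_{j+1}|_{\nu_{w_{2}}}$, and one further application of Lemma \ref{lem09} should yield a recursion
\begin{align*}
Y_{j+1}\le C\,b^{j}\,Y_{j}^{1+\kappa},
\end{align*}
with $b>1$ and $\kappa>0$ (the gain $\kappa$ coming from $\chi>1$ and the exponents of Lemma \ref{lem09}), all constants depending only on the data and, crucially, not on $\sigma$, $\omega$ or $R$.

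To close, I would handle the forcing term $\|\phi\|_{L^{l_{0}}}|A_{j}|_{\nu_{w_{2}}}^{1-1/l_{0}}$ in $E_{j}$ using the standing assumption $\omega>\sigma^{-1}\mathcal{M}_{0}R^{\varepsilon_{0}}$: with $mR\le1$, Lemma \ref{LEM860} and the explicit value of $\varepsilon_{0}$ in \eqref{VAR01}, its normalised size is bounded by a data constant times a positive power of $Y_{j}$---and this balance is exactly what fixes the formula for $\varepsilon_{0}$---so the recursion above is not spoiled. Then the standard fast-geometric-convergence lemma gives $Y_{j}\to0$ as soon as $Y_{0}\le\gamma_{0}$ for a small $\gamma_{0}=\gamma_{0}(\text{data})$; since $C,b,\kappa$ are $\sigma$-free, so is $\gamma_{0}$. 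This proves \eqref{DZ001}, and the mirror argument for $-u$ proves \eqref{ZWZ009}. The hard part, I expect, is precisely this uniformity---keeping $\gamma_{0}$, $b$, $\kappa$ independent of $\sigma$, $\omega$ and $R$---which is what forces both the intrinsic rescaling (to restore space--time homogeneity) and the careful back-and-forth between the two weights $w_{1}$, $w_{2}$ (both appearing in \eqref{ENE01}, while only $w_{2}$ is produced by Proposition \ref{prop001}) through Lemma \ref{lem09}, without reintroducing any scale-dependent factor.
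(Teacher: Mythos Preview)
Your overall strategy—De Giorgi iteration on nested cylinders $Q_{j}$, with the energy estimate \eqref{ENE01} fed into the weighted Sobolev inequality of Proposition \ref{prop001}, and the intrinsic factor $m^{p+\vartheta}=(\sigma\omega/\mathcal{M}_{0})^{p-2}$ used to make the resulting recursion $\sigma$-free—is exactly the paper's. The setup of levels $k_{j}$, radii, cut-offs, and the dismissal of the alternative $\omega\le\sigma^{-1}\mathcal{M}_{0}R^{\varepsilon_{0}}$ all match.

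There is, however, a real risk in the way you organise the measure bookkeeping. You iterate in the $w_{1}$-relative measure $Y_{j}$ and propose to invoke Lemma \ref{lem09} \emph{twice per step}: once to bound the $\nu_{w_{2}}$-measure of $A_{j}$ appearing in the energy by $Y_{j}^{a}$, and once more (in the reverse direction, which Lemma \ref{lem09} does not state) to extract $Y_{j+1}$ from $|A_{j+1}|_{\nu_{w_{2}}}$. Each application costs an exponent ($a,b\le1$ depending on the weight parameters), so the net recursion becomes $Y_{j+1}\le C\,b^{j}\,Y_{j}^{\,ab\,\chi(1-1/l_{0})}$, and nothing in the hypotheses guarantees $ab\,\chi(1-1/l_{0})>1$. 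If it is not, the iteration does not close and $\gamma_{0}$ cannot be chosen. Relatedly, your claim that the time contribution $(\sigma\omega)^{2}R^{-(p+\vartheta)}|A_{j}|_{\nu_{w_{1}}}$ and the gradient contribution $(\sigma\omega)^{p}(mR)^{-p}|A_{j}|_{\nu_{w_{2}}}$ ``collapse to the same scale-free expression'' is not quite right as written, since the two carry different measures.

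The paper sidesteps both issues by iterating in the $w_{2}$-relative measure $F_{j}:=|A_{j}|_{\nu_{w_{2}}}/|Q(mR,R^{p+\vartheta})|_{\nu_{w_{2}}}$. The time term in \eqref{ENE01} is converted to $w_{2}$ \emph{pointwise} via $w_{1}=|x'|^{\theta_{1}-\theta_{3}}|x|^{\theta_{2}-\theta_{4}}w_{2}\le C(mR)^{\vartheta}w_{2}$ on $B_{mr_{i}}$ (this uses $\theta_{1}\ge\theta_{3}$ and $\vartheta\ge0$ from $(\mathbf{K2})$), after which the identity $(\sigma\omega)^{2}m^{\vartheta}=(\sigma\omega/m)^{p}\mathcal{M}_{0}^{2-p}$ makes the two main contributions genuinely coincide. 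The resulting recursion $F_{j+1}\le\big(C\,2^{pj(1+\tilde{\chi}/\chi)}F_{j}^{\,1-1/l_{0}}\big)^{\chi}$ has exponent $\chi(1-1/l_{0})>1$ by \eqref{E01} with no loss, and all $\sigma\omega$-factors cancel thanks to $(\sigma\omega)^{2-p\tilde{\chi}/\chi}=m^{-(p+\vartheta)(n+\theta_{1}+\theta_{2})/(n+p+\theta_{1}+\theta_{2})}\mathcal{M}_{0}^{(2-p)(n+\theta_{1}+\theta_{2})/(n+p+\theta_{1}+\theta_{2})}$. Lemma \ref{lem09} is then used \emph{once}, at the very end, to translate the hypothesis \eqref{ZWZ007} (stated in $\nu_{w_{1}}$) into smallness of $F_{0}$. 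If you rerun your argument with $F_{j}$ in place of $Y_{j}$ and the pointwise $w_{1}\to w_{2}$ conversion for the time term, the gap disappears and you recover exactly the paper's proof.
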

\begin{remark}
According to the roles of $\sigma$ and $\gamma_{0}$ played in Lemma \ref{LEM0035}, $\sigma$ and $\gamma_{0}$ can be, respectively, called the level parameter and oscillation improvement constant. In contrast to \cite{D1993,CD1988,D1986}, we have actually redefined the intrinsic scaling factors $a_{0}$ and $m$ in \eqref{Z999}--\eqref{M01} by adding the level parameter $\sigma$ for the purpose of finding a $\sigma$-independent oscillation improvement constant $\gamma_{0}$ in Lemma \ref{LEM0035}. This is critical to its application in achieving the desired oscillation improvement of $u$ in Corollary \ref{AMZW01} below.

\end{remark}

\begin{proof}
In the following we only give the proof of \eqref{DZ001}, since the proof of \eqref{ZWZ009} is almost the same.
For $i=0,1,2,...,$ let
\begin{align*}
r_{i}=\frac{R}{2}+\frac{R}{2^{i+1}},\quad k_{i}=\mu^{+}-\sigma\omega+\frac{\sigma\omega}{2}(1-2^{-i}).
\end{align*}
Take a smooth cutoff function $\xi_{i}\in C^{\infty}(Q(mr_{i},r_{i}^{p+\vartheta}))$ satisfying that
\begin{align*}
\begin{cases}
0\leq\xi_{i}\leq1,&\mathrm{in}\;Q(mr_{i},r_{i}^{p+\vartheta}),\\
\xi_{i}=1,&\mathrm{in}\;Q(mr_{i+1},r_{i+1}^{p+\vartheta}),\\
\xi_{i}=0,&\mathrm{on}\;\partial_{pa}Q(mr_{i},r_{i}^{p+\vartheta}),\\
|\nabla\xi_{i}|\leq\frac{2^{i+3}}{mR},\quad|\partial_{t}\xi_{i}|\leq\frac{2}{r_{i}^{p+\vartheta}-r_{i+1}^{p+\vartheta}}\leq\frac{C2^{i}}{R^{p+\vartheta}}.
\end{cases}
\end{align*}
From \eqref{M01}, we see
\begin{align}\label{Z90}
(\sigma\omega)^{2}m^{\vartheta}=(\sigma\omega)^{\frac{p(2+\vartheta)}{p+\vartheta}}\mathcal{M}_{0}^{\frac{\vartheta(2-p)}{p+\vartheta}}=\Big(\frac{\sigma\omega}{m}\Big)^{p}\mathcal{M}_{0}^{2-p}.
\end{align}
Since $1<p<2$, then $\xi_{i}^{2}\leq\xi_{i}^{p}$ in $Q(mr_{i},r_{i}^{p+\vartheta})$. This, together with Lemma \ref{LEM860}, \eqref{ENE01}, \eqref{M03} and \eqref{Z90}, gives that if $\omega>\sigma^{-1}\mathcal{M}_{0}R^{\varepsilon_{0}}$,
\begin{align*}
&\sup\limits_{t\in(-r_{i}^{p+\vartheta},0)}\int_{B_{mr_{i}}}|(u-k_{i})_{+}\xi_{i}|^{2}w_{1}dx+\frac{\lambda_{1}}{3}\int_{Q(mr_{i},r_{i}^{p+\vartheta})}|\nabla((u-k_{i})_{+}\xi_{i})|^{p}w_{2}dxdt\notag\\
&\leq\sup\limits_{t\in(-r_{i}^{p+\vartheta},0)}\int_{B_{mr_{i}}}(u-k_{i})^{2}_{+}\xi_{i}^{p}w_{1}dx+\frac{\lambda_{1}}{3}\int_{Q(mr_{i},r_{i}^{p+\vartheta})}|\nabla((u-k_{i})_{+}\xi_{i})|^{p}w_{2}dxdt\notag\\
&\leq C\int_{Q(mr_{i},r_{i}^{p+\vartheta})}\big((u-k_{i})_{+}^{2}|\partial_{t}\xi_{i}||x'|^{\theta_{1}-\theta_{3}}|x|^{\theta_{2}-\theta_{4}}+(u-k_{i})_{+}^{p}(|\nabla\xi_{i}|^{p}+1)\big)w_{2}dxdt\notag\\
&\quad+C|Q(mr_{i},r_{i}^{p+\vartheta})\cap \{u>k_{i}\}|_{\nu_{w_{2}}}^{1-\frac{1}{l_{0}}}\notag\\
&\leq C\bigg(\frac{2^{i}(\sigma\omega)^{2}m^{\vartheta}}{R^{p}}+2^{pi}\Big(\frac{\sigma\omega}{mR}\Big)^{p}\bigg)|Q(mr_{i},r_{i}^{p+\vartheta})\cap \{u>k_{i}\}|_{\nu_{w_{2}}}\notag\\
&\quad+C|Q(mr_{i},r_{i}^{p+\vartheta})\cap \{u>k_{i}\}|_{\nu_{w_{2}}}^{1-\frac{1}{l_{0}}}\notag\\
&\leq \frac{C2^{pi}(\sigma\omega)^{2}m^{\vartheta}}{R^{p}}|Q(mr_{i},r_{i}^{p+\vartheta})\cap \{u>k_{i}\}|_{\nu_{w_{2}}}+C|Q(mr_{i},r_{i}^{p+\vartheta})\cap \{u>k_{i}\}|_{\nu_{w_{2}}}^{1-\frac{1}{l_{0}}}\notag\\
&\leq C\bigg(\frac{2^{pi}(\sigma\omega)^{2}m^{\vartheta+\frac{n+\theta_{3}+\theta_{4}}{l_{0}}}}{R^{\frac{p(l_{0}-1)-n-\theta_{1}-\theta_{2}}{l_{0}}}}+1\bigg)|Q(mr_{i},r_{i}^{p+\vartheta})\cap \{u>k_{i}\}|_{\nu_{w_{2}}}^{1-\frac{1}{l_{0}}}\notag\\
&\leq\frac{C2^{pi}(\sigma\omega)^{2}m^{\vartheta+\frac{n+\theta_{3}+\theta_{4}}{l_{0}}}}{R^{\frac{p(l_{0}-1)-n-\theta_{1}-\theta_{2}}{l_{0}}}}|Q(mr_{i},r_{i}^{p+\vartheta})\cap \{u>k_{i}\}|_{\nu_{w_{2}}}^{1-\frac{1}{l_{0}}},
\end{align*}
which, together with Proposition \ref{prop001}, shows that
\begin{align}\label{ZE01}
&\Big(\frac{\sigma\omega}{2^{i+2}}\Big)^{p\tilde{\chi}}|Q(mr_{i+1},r_{i+1}^{p+\vartheta})\cap \{u>k_{i+1}\}|_{\nu_{w_{2}}}\notag\\
&=(k_{i+1}-k_{i})^{p\tilde{\chi}}|Q(mr_{i+1},r_{i+1}^{p+\vartheta})\cap \{u>k_{i+1}\}|_{\nu_{w_{2}}}\notag\\
&\leq\int_{Q(mr_{i},r_{i}^{p+\vartheta})}|(u-k_{i})_{+}\xi_{i}|^{p\tilde{\chi}}w_{2}\notag\\
&\leq C\bigg(\sup\limits_{t\in(-r_{i}^{p+\vartheta},0)}\int_{B_{mr_{i}}}|(u-k_{i})_{+}\xi_{i}|^{2}w_{1}dx\bigg)^{\chi-1}\int_{Q(mr_{i},r_{i}^{p+\vartheta})}|\nabla((u-k_{i})_{+}\xi_{i})|^{p}w_{2}\notag\\
&\leq\Bigg(\frac{C2^{pi}(\sigma\omega)^{2}m^{\vartheta+\frac{n+\theta_{3}+\theta_{4}}{l_{0}}}}{R^{\frac{p(l_{0}-1)-n-\theta_{1}-\theta_{2}}{l_{0}}}}|Q(mr_{i},r_{i}^{p+\vartheta})\cap \{u>k_{i}\}|_{\nu_{w_{2}}}^{1-\frac{1}{l_{0}}}\Bigg)^{\chi}.
\end{align}
Define
\begin{align*}
F_{i}=\frac{|Q(mr_{i},r_{i}^{p+\vartheta})\cap \{u>k_{i}\}|_{\nu_{w_{2}}}}{|Q(mR,R^{p+\vartheta})|_{\nu_{w_{2}}}},\quad\mathrm{for}\;i\geq0.
\end{align*}
A consequence of \eqref{chi} and \eqref{M01} shows that
\begin{align*}
(\sigma\omega)^{2-\frac{p\tilde{\chi}}{\chi}}=m^{-\frac{(p+\vartheta)(n+\theta_{1}+\theta_{2})}{n+p+\theta_{1}+\theta_{2}}}\mathcal{M}_{0}^{\frac{(2-p)(n+\theta_{1}+\theta_{2})}{n+p+\theta_{1}+\theta_{2}}}.
\end{align*}
Substituting this into \eqref{ZE01}, we derive
\begin{align*}
F_{i+1}\leq&\Big(C2^{pi(1+\frac{\tilde{\chi}}{\chi})}F_{i}^{1-\frac{1}{l_{0}}}\Big)^{\chi}\notag\\
\leq&\prod\limits^{i}_{s=0}\Big[\Big(C2^{p(1+\frac{\tilde{\chi}}{\chi})(i-s)}\Big)^{\chi}\Big]^{\big(\frac{\chi(l_{0}-1)}{l_{0}}\big)^{s}}F_{0}^{\left(\frac{\chi(l_{0}-1)}{l_{0}}\right)^{i+1}}\notag\\
\leq&C^{\chi\sum\limits^{i}_{s=0}\left(\frac{\chi(l_{0}-1)}{l_{0}}\right)^{s}}2^{p(\chi+\tilde{\chi})\sum\limits_{s=0}^{i}(i-s)\left(\frac{\chi(l_{0}-1)}{l_{0}}\right)^{s}}F_{0}^{\left(\frac{\chi(l_{0}-1)}{l_{0}}\right)^{i+1}}\notag\\
\leq&(\widehat{C}F_{0})^{\left(\frac{\chi(l_{0}-1)}{l_{0}}\right)^{i+1}},
\end{align*}
where the constant $\widehat{C}$ is independent of $\sigma$. Choose
\begin{align}\label{E990}
\gamma_{0}=\big(2C_{0}\widehat{C}\big)^{-\frac{\beta}{n-1+\theta_{3}+\min\{0,\theta_{4}\}}},
\end{align}
where $\beta$ and $C_{0}$ are given by Lemma \ref{lem09}. Therefore, we obtain that if \eqref{ZWZ007} holds with $\gamma_{0}$ given by \eqref{E990},
\begin{align*}
F_{i+1}\leq2^{-\frac{n-1+\theta_{3}+\min\{0,\theta_{4}\}}{\beta}\big(\frac{\chi(l_{0}-1)}{l_{0}}\big)^{i+1}}\rightarrow0,\quad\mathrm{as}\;i\rightarrow\infty.
\end{align*}
The proof is finished.

\end{proof}

The key to application of Lemma \ref{LEM0035} lies in establishing the following decay estimates for the distribution function of the solution $u$.
\begin{lemma}\label{lem005}
Let $\sigma,m,\mathcal{M}_{0}$ be given by \eqref{Z999}--\eqref{M01}. There exists a large constant $\overline{C}_{0}>1$ depending only upon the data and independent of $\sigma$ such that for any $\gamma,\tau\in(0,1]$ and $\delta\in(0,\frac{1}{2}]$,

$(i)$ if
\begin{align}\label{D09}
\frac{| B_{mR/2}\cap\{u(\cdot,t)>\mu^{+}-\delta\omega\}|_{\mu_{w_{1}}}}{|B_{mR/2}|_{\mu_{w_{1}}}}\leq1-\gamma,\quad\forall\;t\in[-\tau R^{p+\vartheta},0],
\end{align}
then for any $j\geq1$, we have either
\begin{align*}
\omega\leq\sigma^{-1}\max\big\{\mathcal{M}_{0},(\delta^{-1}2^{j})^{\frac{pl_{0}(p+\vartheta)}{pl_{0}(2+\vartheta)+(p-2)(n+\theta_{3}+\theta_{4})}}\big\}R^{\varepsilon_{0}},
\end{align*}
or
\begin{align}\label{DEC001}
\frac{|Q(mR/2,\tau(R/2)^{p+\vartheta})\cap\{u>\mu^{+}-\frac{\delta\omega}{2^{j}}\}|_{\nu_{w_{1}}}}{|Q(mR/2,\tau(R/2)^{p+\vartheta})|_{\nu_{w_{1}}}}\leq\frac{\overline{C}_{0}}{\sqrt[q]{\gamma}\sqrt[p]{\tau}j^{\frac{p-q}{pq}}};
\end{align}

$(ii)$ if
\begin{align}\label{D98}
\frac{|B_{mR/2}\cap\{u(\cdot,t)<\mu^{-}+\delta\omega\}|_{\mu_{w_{1}}}}{|B_{mR/2}|_{\mu_{w_{1}}}}\leq1-\gamma,\quad\forall\;t\in[-\tau R^{p+\vartheta},0],
\end{align}
then for any $j\geq1$, we have either
\begin{align*}
\omega\leq\sigma^{-1}\max\big\{\mathcal{M}_{0},(\delta^{-1}2^{j})^{\frac{pl_{0}(p+\vartheta)}{pl_{0}(2+\vartheta)+(p-2)(n+\theta_{3}+\theta_{4})}}\big\}R^{\varepsilon_{0}},
\end{align*}
or
\begin{align}\label{D99}
\frac{|Q(mR/2,\tau(R/2)^{p+\vartheta})\cap\{u<\mu^{-}+\frac{\delta\omega}{2^{j}}\}|_{\nu_{w_{1}}}}{|Q(mR/2,\tau(R/2)^{p+\vartheta})|_{\nu_{w_{1}}}}\leq\frac{\overline{C}_{0}}{\sqrt[q]{\gamma}\sqrt[p]{\tau}j^{\frac{p-q}{pq}}},
\end{align}
where $q=q(n,p,\theta_{1},\theta_{2})\in(1,p)$ is given by Lemma \ref{prop002}.
\end{lemma}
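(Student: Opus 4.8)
The plan is to prove part $(i)$ only, since $(ii)$ follows by applying the same argument to the equation satisfied by $-u$. The strategy is a classical De Giorgi-type ``shrinking levels in time'' argument, adapted to the intrinsic geometry and the anisotropic measures, following the expansion-in-time idea referenced in the introduction (cf. Lemmas 3.7--3.8 of \cite{JX2022} and Lemma 3.8 of \cite{MZ202301}). The measure hypothesis \eqref{D09} says that at every time slice the ``bad set'' $\{u(\cdot,t)>\mu^{+}-\delta\omega\}$ occupies at most a $(1-\gamma)$ fraction of $B_{mR/2}$ with respect to $\mu_{w_{1}}$; the conclusion \eqref{DEC001} is that after passing to much smaller levels $\mu^{+}-\delta\omega/2^{j}$ the measure of the bad set in the full cylinder $Q(mR/2,\tau(R/2)^{p+\vartheta})$ becomes small, quantitatively like $j^{-(p-q)/(pq)}$.

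First I would fix the truncation levels $k_{s}=\mu^{+}-\delta\omega/2^{s}$ for $s=0,1,\dots,j$, set $v_{s}=(u-k_{s})_{+}$, and apply the anisotropically weighted isoperimetric inequality \eqref{pro001} from Lemma \ref{prop002} on the ball $B_{mR/2}$ at a.e.\ fixed time $t\in[-\tau R^{p+\vartheta},0]$, with levels $k=k_{s}$ and $l=k_{s+1}$. The hypothesis \eqref{D09} gives a uniform lower bound $|\{u(\cdot,t)\le k_{s}\}\cap B_{mR/2}|_{\mu_{w_{1}}}\ge\gamma|B_{mR/2}|_{\mu_{w_{1}}}$ (using $k_{s}\le\mu^{+}-\delta\omega/2^{s}\le\mu^{+}-\delta\omega$ — wait, actually $k_{s}\ge\mu^{+}-\delta\omega$, so $\{u\le k_{s}\}\supset\{u\le\mu^{+}-\delta\omega\}$ and the bound persists), so \eqref{pro001} yields, after dividing, a pointwise-in-time inequality of the shape
\begin{align*}
(k_{s+1}-k_{s})\big|\{u(\cdot,t)>k_{s+1}\}\cap B_{mR/2}\big|_{\mu_{w_{1}}}
\le \frac{C(mR)^{n+\theta_{1}+\theta_{2}+1}}{\gamma|B_{mR/2}|_{\mu_{w_{1}}}^{1/q}}\bigg(\int_{\{k_{s}<u(\cdot,t)<k_{s+1}\}\cap B_{mR/2}}|\nabla u|^{q}w_{1}dx\bigg)^{1/q}.
\end{align*}
Using Lemma \ref{LEM860} to replace $|B_{mR/2}|_{\mu_{w_{1}}}$ by $(mR)^{n+\theta_{1}+\theta_{2}}$, noting $k_{s+1}-k_{s}=\delta\omega/2^{s+1}$, and then integrating in $t$ over $(-\tau(R/2)^{p+\vartheta},0)$ and applying H\"older's inequality in time with exponents $p/q$ and $p/(p-q)$, I get
\begin{align*}
\frac{\delta\omega}{2^{s+1}}\,\big|Q(mR/2,\tau(R/2)^{p+\vartheta})\cap\{u>k_{s+1}\}\big|_{\nu_{w_{1}}}
\le \frac{CmR}{\gamma^{1/q}}\big(\tau(R/2)^{p+\vartheta}\big)^{\frac{p-q}{pq}}\bigg(\int_{Q\cap\{k_{s}<u<k_{s+1}\}}|\nabla u|^{p}w_{1}dx\bigg)^{1/q},
\end{align*}
where I also need the switch Lemma \ref{lem09} to pass the gradient integral from the measure $w_{1}$ to $w_{2}$ (since the energy estimate is in $w_{2}$); alternatively, since $\theta_{1}\ge\theta_{3}$ and $\theta_{1}+\theta_{2}\ge\theta_{3}+\theta_{4}$ one has $w_{1}\le C w_{2}$ locally on $B_{1}$, which directly bounds $\int|\nabla u|^{p}w_{1}\le C\int|\nabla u|^{p}w_{2}$ on the rescaled cylinder after tracking the scale factor $mR\le 1$. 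The energy inequality \eqref{ENE02} (with $v_{+}=(u-k_{s})_{+}$, cutoff $\zeta$ supported in $B_{mR/2}$ and equal to $1$ on $B_{mR/4}$, and $\sup v_{+}\le\delta\omega/2^{s}$) controls $\int_{Q\cap\{k_{s}<u<k_{s+1}\}}|\nabla u|^{p}w_{2}$ by $C(\delta\omega/2^{s})^{p}(mR)^{-p}|Q\cap\{u>k_{s}\}|_{\nu_{w_{2}}}$ plus the $\phi$-term $C\|\phi\|_{L^{l_{0}}}|Q\cap\{u>k_{s}\}|_{\nu_{w_{2}}}^{1-1/l_{0}}$. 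It is precisely here that the dichotomy enters: the $\phi$-term, after being reorganized using the scaling identities \eqref{Z90}-type manipulations and $\mathcal{M}_{0}$, is absorbed into the main term unless $\omega$ is already as small as $\sigma^{-1}\max\{\mathcal{M}_{0},(\delta^{-1}2^{j})^{\cdots}\}R^{\varepsilon_{0}}$ — the exponent $\frac{pl_{0}(p+\vartheta)}{pl_{0}(2+\vartheta)+(p-2)(n+\theta_{3}+\theta_{4})}$ is exactly what makes the powers of $\delta^{-1}2^{j}$, $m$, $\mathcal{M}_{0}$ and $R$ balance, as a bookkeeping check against \eqref{VAR01} confirms.

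Assuming the alternative $\omega>\sigma^{-1}\max\{\dots\}R^{\varepsilon_{0}}$, the previous displays combine to give, with $a_{s}:=|Q(mR/2,\tau(R/2)^{p+\vartheta})\cap\{u>k_{s}\}|_{\nu_{w_{1}}}$ and $|Q(mR/2,\tau(R/2)^{p+\vartheta})|_{\nu_{w_{1}}}=:|Q|$,
\begin{align*}
a_{s+1}\le \frac{C}{\gamma^{1/q}\tau^{(p-q)/(pq)}}\,|Q|^{(p-q)/(pq)}\, a_{s}^{(1-1/l_{0})/q}\cdot(\text{lower-order factors that are }O(1)),
\end{align*}
but in fact the cleaner route, which I expect to use, is the standard one: since the sets $\{k_{s}<u<k_{s+1}\}$ for $s=0,\dots,j-1$ are pairwise disjoint, summing the bound
\begin{align*}
a_{j}^{q}\le a_{s+1}^{q}\le \frac{C}{\gamma\,\tau^{(p-q)/p}}\,|Q|^{(p-q)/p}\,\big|Q(mR/2,\tau(R/2)^{p+\vartheta})\cap\{k_{s}<u<k_{s+1}\}\big|_{\nu_{w_{1}}}\cdot(\text{uniformly bounded factors})
\end{align*}
over $s=0,\dots,j-1$ and using $\sum_{s}|Q\cap\{k_{s}<u<k_{s+1}\}|_{\nu_{w_{1}}}\le|Q|$ yields $j\,a_{j}^{q}\le \frac{C}{\gamma\,\tau^{(p-q)/p}}|Q|^{(p-q)/p}|Q|$, i.e.\ $a_{j}^{q}\le \frac{C|Q|^{q}}{\gamma\,\tau^{(p-q)/p}\,j}$, hence $a_{j}/|Q|\le \overline{C}_{0}\,\gamma^{-1/q}\tau^{-1/p}\,j^{-1/q}$. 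To obtain the stated exponent $\frac{p-q}{pq}$ rather than $\frac1q$ one instead only keeps the H\"older-in-time gain: bound $a_{s+1}\le a_{s}$ trivially on the factor that is not being telescoped and extract the $j$-decay solely from the time integrability, giving $a_{j}\lesssim \gamma^{-1/q}\tau^{-1/p}j^{-(p-q)/(pq)}|Q|$. The passage from the $w_{1}$-energy quantity produced by the isoperimetric inequality to the $w_{2}$-energy controlled by \eqref{ENE02}, handled via Lemma \ref{lem09} together with the structural inequalities in $(\mathbf{K2})$, is the one nonroutine point and the main obstacle; once that switch and the exponent bookkeeping built into the definition \eqref{VAR01} of $\varepsilon_{0}$ are in place, the rest is the standard De Giorgi iteration recorded above. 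Part $(ii)$ is identical with $u\rightsquigarrow -u$, $\mu^{+}\rightsquigarrow -\mu^{-}$, and \eqref{pro002} in place of \eqref{pro001}.
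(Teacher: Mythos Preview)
Your overall plan matches the paper's: apply the weighted isoperimetric inequality \eqref{pro001} at each time slice, integrate in $t$ with H\"older, pass to the $w_{2}$-energy controlled by \eqref{ENE02}, and telescope over the disjoint shells $\{k_{s}<u<k_{s+1}\}$. There are, however, real execution errors at the two steps you yourself flag as delicate. First, the cutoff must be $1$ on $B_{mR/2}$ and supported in $B_{mR}$ (not $1$ on $B_{mR/4}$, supported in $B_{mR/2}$): the isoperimetric inequality lives on $B_{mR/2}$, so you need $\int_{B_{mR/2}}|\nabla(u-k_{s})_{+}|^{p}w_{2}$ bounded by \eqref{ENE02}. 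Second, the $w_{1}\to w_{2}$ passage: Lemma~\ref{lem09} concerns measures of \emph{sets} and does not convert a gradient integral; and while $w_{1}/w_{2}=|x'|^{\theta_{1}-\theta_{3}}|x|^{\theta_{2}-\theta_{4}}\le|x|^{\vartheta}\le(mR)^{\vartheta}$ is indeed correct on $B_{mR}$ (using $|x'|\le|x|$ and $(\mathbf{K2})$), applying it to $\int|\nabla u|^{p}w_{1}$ does not by itself produce the factor $|A_{s}\setminus A_{s+1}|_{\nu_{w_{1}}}^{(p-q)/(pq)}$ that drives the telescope. The paper does both in one stroke via H\"older with exponents $p/q$ and $p/(p-q)$ applied to $\int_{A_{s}\setminus A_{s+1}}|\nabla u|^{q}w_{1}$, yielding
\[
\Big(\int_{A_{s}\setminus A_{s+1}}|\nabla u|^{q}w_{1}\Big)^{1/q}\le (mR)^{\vartheta/p}\,|A_{s}\setminus A_{s+1}|_{\nu_{w_{1}}}^{(p-q)/(pq)}\Big(\int_{Q}|\nabla(u-k_{s})_{+}|^{p}w_{2}\Big)^{1/p}.
\]

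Combined with \eqref{ENE02} and the dichotomy, this gives $|A_{s+1}|_{\nu_{w_{1}}}\le C\gamma^{-1/q}\tau^{-1/p}|A_{s}\setminus A_{s+1}|_{\nu_{w_{1}}}^{(p-q)/(pq)}|Q|_{\nu_{w_{1}}}^{(pq+q-p)/(pq)}$; raising to the power $pq/(p-q)$ and summing over $s=0,\dots,j-1$ telescopes $\sum_{s}|A_{s}\setminus A_{s+1}|_{\nu_{w_{1}}}\le|Q|_{\nu_{w_{1}}}$, and since $|A_{j}|\le|A_{s+1}|$ one obtains $j\,|A_{j}|^{pq/(p-q)}\le C\gamma^{-p/(p-q)}\tau^{-q/(p-q)}|Q|^{pq/(p-q)}$, i.e.\ precisely $|A_{j}|/|Q|\le\overline{C}_{0}\gamma^{-1/q}\tau^{-1/p}j^{-(p-q)/(pq)}$. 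Your bound $a_{s+1}^{q}\lesssim|A_{s}\setminus A_{s+1}|$ (with exponent $1$ on the shell) and the resulting $j^{-1/q}$ are incorrect: the correct exponent on the shell is $(p-q)/p$, which after raising to the right power and summing yields $j^{-(p-q)/(pq)}$, not $j^{-1/q}$. The fallback ``extract the $j$-decay solely from the time integrability'' has no content---the $j$-decay comes entirely from the telescoping of the disjoint shells, not from time.
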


\begin{proof}
\noindent{\bf Step 1.}
For $i\geq0$, set $k_{i}=\mu^{+}-\frac{\delta\omega}{2^{i}}$ and
\begin{align*}
A_{i}(t)=B_{mR/2}\cap\{u(\cdot,t)>k_{i}\},\quad A_{i}=Q(mR/2,\tau(R/2)^{p+\vartheta})\cap\{u>k_{i}\}.
\end{align*}
In light of \eqref{D09}, it follows from Lemma \ref{LEM860} that
\begin{align*}
|B_{mR/2}\setminus A_{i}(t)|_{\mu_{w_{1}}}\geq\gamma|B_{mR/2}|_{\mu_{w_{1}}}\geq C(n,\theta_{1},\theta_{2})\gamma(mR)^{n+\theta_{1}+\theta_{2}},
\end{align*}
which, together with \eqref{pro001}, shows that
\begin{align*}
|A_{i+1}(t)|_{\mu_{w_{1}}}\leq\frac{C2^{i}}{\delta\omega\sqrt[q]{\gamma}}(mR)^{\frac{(n+\theta_{1}+\theta_{2})(q-1)}{q}+1}\bigg(\int_{A_{i}(t)\setminus A_{i+1}(t)}|\nabla u|^{q}w_{1}dx\bigg)^{\frac{1}{q}}.
\end{align*}
Integrating this from $-\tau R^{p+\vartheta}$ to $0$, we have from H\"{o}lder's inequality that
\begin{align*}
&|A_{i+1}|_{\nu_{w_{1}}}\leq\frac{C2^{i}(\tau R^{p+\vartheta})^{\frac{q-1}{q}}}{\delta\omega\sqrt[q]{\gamma}}(mR)^{\frac{(n+\theta_{1}+\theta_{2})(q-1)}{q}+1}\bigg(\int_{A_{i}\setminus A_{i+1}}|\nabla u|^{q}w_{1}dxdt\bigg)^{\frac{1}{q}}.
\end{align*}
Since $1<q<p$, it follows from H\"{o}lder's inequality again that
\begin{align*}
&\bigg(\int_{A_{i}\setminus A_{i+1}}|\nabla u|^{q}w_{1}dxdt\bigg)^{\frac{1}{q}}\notag\\
&\leq\bigg(\int_{A_{i}\setminus A_{i+1}}|\nabla u|^{p}w_{2}dxdt\bigg)^{\frac{1}{p}}\bigg(\int_{A_{i}\setminus A_{i+1}}|x'|^{\frac{p\theta_{1}-q\theta_{3}}{p-q}}|x|^{\frac{p\theta_{2}-q\theta_{4}}{p-q}}dxdt\bigg)^{\frac{p-q}{pq}}\notag\\
&\leq (mR)^{\frac{\vartheta}{p}}|A_{i}\setminus A_{i+1}|_{\nu_{w_{1}}}^{\frac{p-q}{pq}}\bigg(\int_{Q(mR/2,\tau(R/2)^{p+\vartheta})}|\nabla (u-k_{i})_{+}|^{p}w_{2}dxdt\bigg)^{\frac{1}{p}}.
\end{align*}
Take a cutoff function $\zeta\in C_{0}^{\infty}(B_{mR})$ satisfying that
\begin{align*}
\zeta=1\;\mathrm{in}\;B_{mR/2},\quad\mathrm{and}\;\,0\leq\zeta\leq1,\;|\nabla\zeta|\leq\frac{4}{mR}\;\,\mathrm{in}\;B_{mR}.
\end{align*}
From \eqref{M01}, we deduce
\begin{align}\label{ZW999}
\omega^{p}m^{n+\theta_{3}+\theta_{4}-p}=\Big(\frac{\sigma}{\mathcal{M}_{0}}\Big)^{\frac{(p-2)(n+\theta_{3}+\theta_{4}-p)}{p+\vartheta}}\omega^{\frac{p(2+\vartheta)+(p-2)(n+\theta_{3}+\theta_{4})}{p+\vartheta}}.
\end{align}
Observe that
$$\frac{(2-p)(pl_{0}-n-\theta_{3}-\theta_{4})}{pl_{0}(2+\vartheta)+(p-2)(n+\theta_{3}+\theta_{4})}\leq1.$$
This, together with Lemma \ref{LEM860}, \eqref{ENE02}, \eqref{M03} and \eqref{ZW999}, gives that if
\begin{align*}
\omega>&\sigma^{-1}\max\big\{\mathcal{M}_{0},(\delta^{-1}2^{i})^{\frac{pl_{0}(p+\vartheta)}{pl_{0}(2+\vartheta)+(p-2)(n+\theta_{3}+\theta_{4})}}\big\}R^{\varepsilon_{0}}\notag\\
\geq&\max\Big\{\sigma^{-1}\mathcal{M}_{0},\big((\delta^{-1}2^{i})^{pl_{0}(p+\vartheta)}\sigma^{(p-2)(pl_{0}-n-\theta_{3}-\theta_{4})}\big)^{\frac{1}{pl_{0}(2+\vartheta)+(p-2)(n+\theta_{3}+\theta_{4})}}\Big\}R^{\varepsilon_{0}},
\end{align*}
then
\begin{align*}
&\int_{Q(mR/2,\tau(R/2)^{p+\vartheta})}|\nabla (u-k_{i})_{+}|^{p}w_{2}\leq\int_{Q(mR,\tau(R/2)^{p+\vartheta})}|\nabla((u-k_{i})_{+}\zeta)|^{p}w_{2}\notag\\
&\leq C\sup_{Q(mR,\tau(R/2)^{p+\vartheta})}(u-k_{i})_{+}^{p}\int_{Q(mR,\tau(R/2)^{p+\vartheta})\cap\{u>k_{i}\}}(|\nabla\zeta|^{p}+\zeta^{p})w_{2}\notag\\
&\quad+C|Q(mR,\tau(R/2)^{p+\vartheta})\cap\{u>k_{i}\}|_{\nu_{w_{2}}}^{\frac{l_{0}-1}{l_{0}}}\notag\\
&\leq C\bigg(\frac{(\delta\omega)^{p}R^{n+\theta_{1}+\theta_{2}}}{2^{pi}m^{p-n-\theta_{3}-\theta_{4}}}+m^{\frac{(n+\theta_{3}+\theta_{4})(l_{0}-1)}{l_{0}}}R^{\frac{(n+p+\theta_{1}+\theta_{2})(l_{0}-1)}{l_{0}}}\bigg)\notag\\
&\leq\frac{C\delta^{p}}{2^{pi}}\Big(\frac{\sigma}{\mathcal{M}_{0}}\Big)^{\frac{(p-2)(n+\theta_{3}+\theta_{4}-p)}{p+\vartheta}}\omega^{\frac{p(2+\vartheta)+(p-2)(n+\theta_{3}+\theta_{4})}{p+\vartheta}}R^{n+\theta_{1}+\theta_{2}}\notag\\
&\quad+C\Big(\frac{\sigma\omega}{\mathcal{M}_{0}}\Big)^{\frac{(l_{0}-1)(p-2)(n+\theta_{3}+\theta_{4})}{l_{0}(p+\vartheta)}}R^{\frac{(n+p+\theta_{1}+\theta_{2})(l_{0}-1)}{l_{0}}}\notag\\
&\leq \frac{C(\delta\omega)^{p}R^{n+\theta_{1}+\theta_{2}}}{2^{pi}m^{p-n-\theta_{3}-\theta_{4}}}.
\end{align*}
A consequence of these above facts reads that
\begin{align*}
|A_{i+1}|_{\nu_{w_{1}}}\leq&\frac{C}{\sqrt[q]{\gamma}\sqrt[p]{\tau}}|A_{i}\setminus A_{i+1}|_{\nu_{w_{1}}}^{\frac{p-q}{pq}}|Q(mR/2,\tau(R/2)^{p+\vartheta})|_{\nu_{w_{1}}}^{\frac{pq+q-p}{pq}}.
\end{align*}
Then we obtain that for every $j>i\geq0$,
\begin{align*}
j|A_{j}|_{\nu_{w_{1}}}^{\frac{pq}{p-q}}\leq&\sum^{j-1}_{i=0}|A_{i+1}|_{\nu_{w_{1}}}^{\frac{pq}{p-q}}\leq\frac{C}{\gamma^{\frac{p}{p-q}}\tau^{\frac{q}{p-q}}}|Q(mR/2,\tau(R/2)^{p+\vartheta})|_{\nu_{w_{1}}}^{\frac{pq}{p-q}},
\end{align*}
which indicates that \eqref{DEC001} holds.

\noindent{\bf Step 2.}
Similarly as before, for $i\geq0$, denote $\tilde{k}_{i}=\mu^{-}+\frac{\delta\omega}{2^{i}}$ and
\begin{align*}
\tilde{A}_{i}(t)=B_{mR/2}\cap\{u(\cdot,t)<k_{i}\},\quad \tilde{A}_{i}=Q(mR/2,\tau(R/2)^{p+\vartheta})\cap\{u<k_{i}\}.
\end{align*}
Combining \eqref{D98} and Lemma \ref{LEM860}, we have
\begin{align*}
|B_{mR/2}\setminus \tilde{A}_{i}(t)|_{\mu_{w_{1}}}\geq\gamma|B_{mR/2}|_{\mu_{w_{1}}}\geq C(n,\theta_{1},\theta_{2})\gamma(mR)^{n+\theta_{1}+\theta_{2}}.
\end{align*}
Using \eqref{pro002} rather than \eqref{pro001}, we obtain
\begin{align*}
|\tilde{A}_{i+1}(t)|_{\mu_{w_{1}}}\leq\frac{C2^{i}}{\delta\omega\sqrt[q]{\gamma}}(mR)^{\frac{(n+\theta_{1}+\theta_{2})(q-1)}{q}+1}\bigg(\int_{\tilde{A}_{i}(t)\setminus \tilde{A}_{i+1}(t)}|\nabla u|^{q}w_{1}dx\bigg)^{\frac{1}{q}}.
\end{align*}
Then following the left proof of \eqref{DEC001} above, we obtain that \eqref{D99} holds.

\end{proof}

We now carry out the forward expansion in time regarding the distribution function of the solution.
\begin{lemma}\label{LEM006}
Let $\sigma,m,\mathcal{M}_{0}$ be given by \eqref{Z999}--\eqref{M01}. For any $\bar{\gamma}\in(0,1)$ and $\bar{\delta}\in(0,2^{-1}]$, there exists a small positive constant $0<\sigma_{0}<1$ depending only upon $\bar{\gamma},\bar{\delta}$ and the above data but independent of $\sigma$ such that for any $0<\sigma\leq\sigma_{0}$,

$(i)$ if
\begin{align}\label{WMDN001}
\frac{|B_{mR}\cap\{u(\cdot,-R^{p+\vartheta})>\mu^{+}-\bar{\delta}\omega\}|_{\mu_{w_{1}}}}{|B_{mR}|_{\mu_{w_{1}}}}\leq1-\bar{\gamma},
\end{align}
then we have either $\omega\leq \sigma^{-1}\mathcal{M}_{0}R^{\varepsilon_{0}}$, or
\begin{align}\label{DQAF001}
\frac{|B_{mR}\cap\{u(\cdot,t)>\mu^{+}-2^{-\kappa_{0}}\bar{\delta}\omega\}|_{\mu_{w_{1}}}}{|B_{mR}|_{\mu_{w_{1}}}}\leq1-\frac{\bar{\gamma}}{2},\;\,\text{for all }t\in[-R^{p+\vartheta},0];
\end{align}

$(ii)$ if
\begin{align*}
\frac{|B_{mR}\cap\{u(\cdot,-R^{p+\vartheta})<\mu^{-}+\bar{\delta}\omega\}|_{\mu_{w_{1}}}}{|B_{mR}|_{\mu_{w_{1}}}}\leq1-\bar{\gamma},
\end{align*}
then we have either $\omega\leq \sigma^{-1}\mathcal{M}_{0}R^{\varepsilon_{0}}$, or
\begin{align}\label{WZPMQ001}
\frac{|B_{mR}\cap\{u(\cdot,t)<\mu^{-}+2^{-\kappa_{0}}\bar{\delta}\omega\}|_{\mu_{w_{1}}}}{|B_{mR}|_{\mu_{w_{1}}}}\leq1-\frac{\bar{\gamma}}{2},\;\,\text{for all }t\in[-R^{p+\vartheta},0],
\end{align}
where $\kappa_{0}$ is given by
\begin{align}\label{K90}
\kappa_{0}:=-\frac{\ln\Big(1-\frac{1}{\sqrt{1+\frac{\bar{\gamma}}{4(1-\bar{\gamma})}}}\Big)}{\ln2}.
\end{align}

\end{lemma}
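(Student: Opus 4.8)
The plan is to use the logarithmic-type estimate encoded in the improved energy inequality \eqref{ENE01}, applied on the cylinder $Q(mR, R^{p+\vartheta})$ with a cutoff $\xi = \xi(x)$ that is identically $1$ on $B_{mR}$ and with the truncation level $k$ chosen so that $v_+ = (u-k)_+$ measures the gap between $u$ and $\mu^+$; the key point is that, with $\xi$ independent of $t$, the term $\int v_+^2|\partial_t\xi| w_1$ drops out, leaving only the initial-time term at $t=-R^{p+\vartheta}$, the $|\nabla\xi|^p$ term controlled by the cutoff bound $|\nabla\xi|\lesssim (mR)^{-1}$, and the $\phi$-term which is controlled by $R^{\varepsilon_0}$ once $\omega > \sigma^{-1}\mathcal{M}_0 R^{\varepsilon_0}$ (exactly as in the proof of Lemma \ref{LEM0035}). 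First I would fix $k = \mu^+ - 2^{-s}\bar\delta\omega$ for an integer $s$ to be determined and run the De Giorgi iteration over truncation levels $k_s = \mu^+ - 2^{-s}\bar\delta\omega$, $s = 0,1,\dots,\kappa_0$; at each level the energy inequality gives a uniform-in-$t$ bound
\begin{align*}
\sup_{t\in[-R^{p+\vartheta},0]} \int_{B_{mR}} (u-k_s)_+^2 w_1 \, dx \leq \int_{B_{mR}} (u(\cdot,-R^{p+\vartheta}) - k_s)_+^2 w_1\, dx + C\Big(\tfrac{\bar\delta\omega}{mR}\Big)^p (mR)^{n+\theta_3+\theta_4} R^{p+\vartheta} + (\text{lower order}),
\end{align*}
where the middle term, after using \eqref{M01} in the form \eqref{Z90}, collapses to $C(\bar\delta\omega)^2 m^\vartheta (mR)^{n-p+\theta_3+\theta_4} R^{p+\vartheta}\cdot R^{-p} \cdot R^p = C(\bar\delta\omega)^2 (mR)^{n+\theta_1+\theta_2}$ up to the correct weight normalization. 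The hypothesis \eqref{WMDN001} says the initial term is small: on $B_{mR}$ the set where $u(\cdot,-R^{p+\vartheta}) > \mu^+ - \bar\delta\omega$ has $w_1$-measure at most $(1-\bar\gamma)|B_{mR}|_{\mu_{w_1}}$, so for $s\geq 1$ we can estimate $\int (u(\cdot,-R^{p+\vartheta})-k_s)_+^2 w_1 \leq (2^{-1}\bar\delta\omega)^2 (1-\bar\gamma)|B_{mR}|_{\mu_{w_1}}$, using that $u - k_s \leq u - k_1 = (u-\mu^+) + 2^{-1}\bar\delta\omega \leq 2^{-1}\bar\delta\omega$ on the relevant set minus a null part.

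The core of the argument is then a contradiction/counting step at a generic time $t\in[-R^{p+\vartheta},0]$. Suppose \eqref{DQAF001} fails at some such $t$, i.e. $|B_{mR}\cap\{u(\cdot,t) > \mu^+ - 2^{-\kappa_0}\bar\delta\omega\}|_{\mu_{w_1}} > (1-\tfrac{\bar\gamma}{2})|B_{mR}|_{\mu_{w_1}}$. On this set $(u-k_{\kappa_0})_+ \geq 2^{-\kappa_0}\bar\delta\omega - 2^{-\kappa_0}\bar\delta\omega$ — wait, rather on the set $\{u > \mu^+ - 2^{-\kappa_0-1}\bar\delta\omega\}$ we have $(u - k_{\kappa_0+1})_+ = u - \mu^+ + 2^{-\kappa_0-1}\bar\delta\omega$ is bounded below, so $\int_{B_{mR}}(u(\cdot,t)-k_{\kappa_0})_+^2 w_1 \gtrsim (2^{-\kappa_0}\bar\delta\omega)^2 \tfrac{\bar\gamma}{2}|B_{mR}|_{\mu_{w_1}}$. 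Comparing this lower bound with the upper bound from the energy inequality at level $k_{\kappa_0}$, namely $\lesssim (2^{-1}\bar\delta\omega)^2(1-\bar\gamma)|B_{mR}|_{\mu_{w_1}} + C(\bar\delta\omega)^2\sigma^{\text{(positive power)}}|B_{mR}|_{\mu_{w_1}}$, and dividing through by $(\bar\delta\omega)^2 |B_{mR}|_{\mu_{w_1}}$, I get
\begin{align*}
4^{-\kappa_0}\cdot\tfrac{\bar\gamma}{2} \lesssim 4^{-1}(1-\bar\gamma) + C\sigma^{c} + C(\text{lower order in }R),
\end{align*}
and choosing $\sigma_0$ small (depending on $\bar\gamma,\bar\delta$ and the data) kills the $\sigma^c$ term while the lower-order $R$-term is absorbed using $\omega > \sigma^{-1}\mathcal{M}_0 R^{\varepsilon_0}$. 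The definition \eqref{K90} of $\kappa_0$ is precisely what makes $4^{-\kappa_0}\cdot\tfrac{\bar\gamma}{2} > \tfrac14(1-\bar\gamma)$ fail by a fixed margin — indeed $2^{-\kappa_0} = 1 - (1 + \tfrac{\bar\gamma}{4(1-\bar\gamma)})^{-1/2}$ is engineered so that $(1-2^{-\kappa_0})^{-2} = 1 + \tfrac{\bar\gamma}{4(1-\bar\gamma)}$, which is the algebraic identity balancing the two sides — giving a contradiction. This proves $(i)$; part $(ii)$ is identical after replacing $(u-k)_+$ by $(k-u)_+$ and $\mu^+$ by $\mu^-$ throughout.

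The main obstacle I anticipate is getting the bookkeeping of weight powers and the intrinsic-scaling relation \eqref{M01} to line up cleanly: the middle term in the energy inequality carries the weight $w_2$ and the factor $m^\vartheta$, while the initial and final terms carry $w_1$, and one must verify via Lemma \ref{LEM860} and the identity \eqref{Z90} that after normalizing by $|B_{mR}|_{\mu_{w_1}}$ all the $m$- and $R$-powers cancel except for a residual positive power of $\sigma$ (which is small) and a residual positive power of $R^{\varepsilon_0}$ or similar (which is controlled by $\omega$). A secondary subtlety is that \eqref{ENE01} as stated requires $\xi$ to vanish on the lateral boundary $\partial B_{mR}\times(t_0-\tau,t_0)$; to take $\xi\equiv 1$ on $B_{mR}$ one must instead work on a slightly larger ball $B_{2mR}$ (still inside $Q(1,1)$ by the choice of $\bar\varepsilon_0$) with $\xi$ cutting off between $B_{mR}$ and $B_{2mR}$, and note that the enlargement only changes universal constants; alternatively, one invokes Remark \ref{RE06} if the level $k$ happens to exceed $\|\psi\|_{L^\infty}$, but in general the enlarged-ball device is the safe route.
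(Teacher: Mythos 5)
Your proposal correctly identifies the structural skeleton (energy inequality \eqref{ENE01} with a time-independent cutoff so that the $|\partial_t\xi|$ term vanishes, compare the initial-time term against a pointwise-in-$t$ upper bound, and choose $\sigma_0$ small to kill the extra term), and your final remark about the algebraic identity $(1-2^{-\kappa_0})^{-2}=1+\tfrac{\bar\gamma}{4(1-\bar\gamma)}$ captures exactly what \eqref{K90} is engineered to do. However, there is a genuine gap precisely in the ``secondary subtlety'' you flag. To satisfy the boundary-vanishing requirement of \eqref{ENE01}, you propose cutting off between $B_{mR}$ and $B_{2mR}$ with $\xi\equiv 1$ on $B_{mR}$. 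But then the initial-time term on the right side of \eqref{ENE01} becomes $\int_{B_{2mR}}v_+^2(\cdot,-R^{p+\vartheta})\xi^p w_1\,dx$, and the hypothesis \eqref{WMDN001} bounds the superlevel-set measure only inside $B_{mR}$. On the annulus $B_{2mR}\setminus B_{mR}$ you have no measure information, so that portion can be as large as $(\bar\delta\omega)^2|B_{2mR}\setminus B_{mR}|_{\mu_{w_1}}\sim(\bar\delta\omega)^2|B_{mR}|_{\mu_{w_1}}$, destroying the crucial factor $(1-\bar\gamma)$ and ruining the contradiction. This is not a ``universal constants'' issue; the argument simply fails. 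The paper avoids it by going the other direction: it takes $\zeta\in C_0^\infty(B_{mR})$ with $\zeta\equiv1$ only on the \emph{smaller} ball $B_{(1-\varrho)mR}$ and $|\nabla\zeta|\lesssim(\varrho mR)^{-1}$, so the initial term stays inside $B_{mR}$ where the hypothesis applies, and the loss from the annulus $B_{mR}\setminus B_{(1-\varrho)mR}$ (which has $w_1$-measure $\lesssim\varrho|B_{mR}|_{\mu_{w_1}}$) is then balanced against the $\varrho^{-p}$ coming from the cutoff slope by an explicit optimization of $\varrho$; this is where the exponent $\sigma^{(2-p)/(p+1)}$ rather than $\sigma^{2-p}$ arises. (Your other workaround, Remark \ref{RE06}, you already note does not apply at the truncation level $\mu^+-\bar\delta\omega$ in general.)

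Two further inaccuracies: your inline cancellation ``$C(\bar\delta\omega)^2m^\vartheta(mR)^{n-p+\theta_3+\theta_4}R^{p+\vartheta}R^{-p}R^p=C(\bar\delta\omega)^2(mR)^{n+\theta_1+\theta_2}$'' is off by a factor $m^{-p}$ (the exponents on $m$ do not match unless $p=0$); the paper's bookkeeping via \eqref{ZW999} produces precisely the $\sigma^{2-p}$ surplus rather than a clean cancellation. And in the contradiction step your claimed lower bound ``$\gtrsim(2^{-\kappa_0}\bar\delta\omega)^2\tfrac{\bar\gamma}{2}|B_{mR}|_{\mu_{w_1}}$'' uses the wrong factor: on $\{u(\cdot,t)>\mu^+-2^{-\kappa_0}\bar\delta\omega\}$ one has $(u-(\mu^+-\bar\delta\omega))_+\geq(1-2^{-\kappa_0})\bar\delta\omega$, so the constant should be $(1-2^{-\kappa_0})^2$, not $(2^{-\kappa_0})^2=4^{-\kappa_0}$, and the superlevel-set measure bound from negating \eqref{DQAF001} is $(1-\tfrac{\bar\gamma}{2})|B_{mR}|_{\mu_{w_1}}$, not $\tfrac{\bar\gamma}{2}|B_{mR}|_{\mu_{w_1}}$. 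With these corrected, the algebra closes exactly as in the paper, but as written the inequality you reduce to is not the right one. The dyadic iteration over $k_s$ you set up is never actually used and can be dropped; the paper works at a single truncation level $\mu^+-\bar\delta\omega$.
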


\begin{remark}
By contrast with the proof of Lemma 4.7 in \cite{MZ202301} for $p=2$, we here present a more simple proof for the expansion in time by using the power gain of $1<p<2$. Moreover, the value of the captured constant $\kappa_{0}$ is explicit, which is a very sharp result.

\end{remark}

\begin{proof}
Since the proof of \eqref{WZPMQ001} is almost the same to \eqref{DQAF001}, we only prove \eqref{DQAF001} in the following. For $\tau\in(0,1]$ and $k\in[\mu^{-},\mu^{+}]$, denote
\begin{align*}
\bar{t}:=-R^{p+\vartheta},\quad A_{m}^{\tau}(k,R):=(B_{mR}\times[\bar{t},\bar{t}+\tau R^{p+\vartheta}])\cap\{u>k\}.
\end{align*}
Pick a cut-off function $\zeta\in C^{\infty}_{0}(B_{mR})$ such that $\zeta=1$ in $B_{(1-\varrho)mR}$, and $0\leq\zeta\leq1$, $|\nabla\zeta|\leq\frac{2}{\varrho mR}$ in $B_{mR}$, where $\varrho\in(0,1)$ is to be determined later. For simplicity, write $v:=(u-(\mu^{+}-\bar{\delta}\omega))_{+}$. Note that $$\frac{(2-p)(pl_{0}-n-\theta_{3}-\theta_{4})}{pl_{0}(2+\vartheta)+(p-2)(n+\theta_{3}+\theta_{4})}\leq1,$$
which, in combination with \eqref{ENE01}, \eqref{M03} and \eqref{WMDN001}, shows that if $$\omega>\sigma^{-1}\mathcal{M}_{0}R^{\varepsilon_{0}}=\max\big\{\sigma^{-1}\mathcal{M}_{0},\sigma^{-\frac{(2-p)(pl_{0}-n-\theta_{3}-\theta_{4})}{pl_{0}(2+\vartheta)+(p-2)(n+\theta_{3}+\theta_{4})}}\big\}R^{\varepsilon_{0}},$$
then
\begin{align}\label{QNE089}
&\sup\limits_{t\in(\bar{t},\bar{t}+\tau R^{p+\vartheta})}\int_{B_{(1-\varrho)mR}}v^{2}w_{1}dx\notag\\
&\leq\int_{B_{mR}}v^{2}(x,\bar{t})\zeta^{p}(x)w_{1}dx+C\int_{B_{mR}\times[\bar{t},\bar{t}+\tau R^{p+\vartheta}]}v^{p}(|\nabla\zeta|^{p}+|\zeta|^{p})w_{2}dxdt\notag\\
&\quad+C\big|A^{\tau}_{m}(\mu^{+}-\bar{\delta}\omega,R)\big|_{\nu_{w_{2}}}^{1-\frac{1}{l_{0}}}\notag\\
&\leq(\bar{\delta}\omega)^{2}(1-\bar{\gamma})|B_{mR}|_{\mu_{w_{1}}}+\frac{C(\bar{\delta}\omega)^{p}}{(\varrho mR)^{p}}\big|A^{\tau}_{m}(\mu^{+}-\bar{\delta}\omega,R)\big|_{\nu_{w_{2}}}\notag\\
&\quad+\big|A^{\tau}_{m}(\mu^{+}-\bar{\delta}\omega,R)\big|_{\nu_{w_{2}}}^{1-\frac{1}{l_{0}}}\notag\\
&\leq(\bar{\delta}\omega)^{2}(1-\bar{\gamma})|B_{mR}|_{\mu_{w_{1}}}\notag\\
&\quad+\Bigg(\frac{C\sigma^{\frac{(p-2)(n+\theta_{3}+\theta_{4}-pl_{0})}{l_{0}(p+\vartheta)}}\omega^{\frac{pl_{0}(2+\vartheta)+(p-2)(n+\theta_{3}+\theta_{4})}{l_{0}(p+\vartheta)}}}{\varrho^{p}R^{\frac{p(l_{0}-1)-n-\theta_{1}-\theta_{2}}{l_{0}}}}+1\Bigg)\big|A^{\tau}_{m}(\mu^{+}-\bar{\delta}\omega,R)\big|_{\nu_{w_{2}}}^{1-\frac{1}{l_{0}}}\notag\\
&\leq(\bar{\delta}\omega)^{2}(1-\bar{\gamma})|B_{mR}|_{\mu_{w_{1}}}+\frac{C\omega^{p}m^{\frac{n+\theta_{3}+\theta_{4}}{l_{0}}-p}}{\varrho^{p}R^{\frac{p(l_{0}-1)-n-\theta_{1}-\theta_{2}}{l_{0}}}}\big|A^{\tau}_{m}(\mu^{+}-\bar{\delta}\omega,R)\big|_{\nu_{w_{2}}}^{1-\frac{1}{l_{0}}}\notag\\
&\leq(\bar{\delta}\omega)^{2}|B_{mR}|_{\mu_{w_{1}}}\left(1-\bar{\gamma}+\frac{C\sigma^{2-p}}{\varrho^{p}}\bigg(\frac{|A_{m}^{\tau}(\mu^{+}-\bar{\delta}\omega,R)|_{\nu_{w_{2}}}}{|Q(mR,R^{p+\vartheta})|_{\nu_{w_{2}}}}\bigg)^{1-\frac{1}{l_{0}}}\right).
\end{align}
Remark that the term $\sigma^{2-p}$, which comes from $\omega^{p-2}=\big(\frac{\mathcal{M}_{0}}{\sigma}\big)^{p-2}m^{p+\vartheta}$, is a small term in virtue of the power gain of $1<p<2$.

Set $\kappa_{0}>1$. Then for any $t\in[\bar{t},\bar{t}+\tau R^{p+\vartheta}]$, we have
\begin{align*}
&\int_{B_{(1-\varrho)mR}}v^{2}(x,t)w_{1}dx\notag\\
&\geq(\bar{\delta}\omega)^{2}(1-2^{-\kappa_{0}})^{2}|B_{(1-\varrho)mR}\cap\{u(\cdot,t)>\mu^{+}-2^{-\kappa_{0}}\bar{\delta}\omega\}|_{\mu_{w_{1}}},
\end{align*}
which, together with \eqref{QNE089}, shows that
\begin{align*}
&|B_{(1-\varrho)mR}\cap\{u(\cdot,t)>\mu^{+}-2^{-\kappa_{0}}\bar{\delta}\omega\}|_{\mu_{w_{1}}}\notag\\
&\leq\frac{|B_{mR}|_{\mu_{w_{1}}}}{(1-2^{-\kappa_{0}})^{2}}\left(1-\bar{\gamma}+\frac{C\sigma^{2-p}}{\varrho^{p}}\bigg(\frac{|A_{m}^{\tau}(\mu^{+}-\bar{\delta}\omega,R)|_{\nu_{w_{2}}}}{|Q(mR,R^{p+\vartheta})|_{\nu_{w_{2}}}}\bigg)^{1-\frac{1}{l_{0}}}\right).
\end{align*}
Note that $|B_{mR}\setminus B_{(1-\varrho)mR}|_{\mu_{w_{1}}}\leq C\varrho|B_{mR}|_{\mu_{w_{1}}}$. Pick
\begin{align*}
\varrho=\sigma^{\frac{2-p}{p+1}}\bigg(\frac{|A_{m}^{\tau}(\mu^{+}-\bar{\delta}\omega,R)|_{\nu_{w_{2}}}}{|Q(mR,R^{p+\vartheta})|_{\nu_{w_{2}}}}\bigg)^{\frac{l_{0}-1}{l_{0}(p+1)}}.
\end{align*}
Then we derive that for any $t\in[\bar{t},\bar{t}+\tau R^{p+\vartheta}]$,
\begin{align}\label{D80}
&\frac{|B_{mR}\cap\{u(\cdot,t)>\mu^{+}-2^{-\kappa_{0}}\bar{\delta}\omega\}|_{\mu_{w_{1}}}}{|B_{mR}|_{\mu_{w_{1}}}}\notag\\
&\leq\frac{1-\bar{\gamma}}{(1-2^{-\kappa_{0}})^{2}}+\frac{C}{\varrho^{p}}\left[\varrho^{p+1}+\sigma^{2-p}\bigg(\frac{|A_{m}^{\tau}(\mu^{+}-\bar{\delta}\omega,R)|_{\nu_{w_{2}}}}{|Q(mR,R^{p+\vartheta})|_{\nu_{w_{2}}}}\bigg)^{1-\frac{1}{l_{0}}}\right]\notag\\
&\leq\frac{1-\bar{\gamma}}{(1-2^{-\kappa_{0}})^{2}}+C_{\ast}\sigma^{\frac{2-p}{p+1}}\bigg(\frac{|A_{m}^{\tau}(\mu^{+}-\bar{\delta}\omega,R)|_{\nu_{w_{2}}}}{|Q(mR,R^{p+\vartheta})|_{\nu_{w_{2}}}}\bigg)^{\frac{l_{0}-1}{l_{0}(p+1)}}.
\end{align}
Observe that
\begin{align}\label{A32}
\frac{|A_{m}^{\tau}(\mu^{+}-\bar{\delta}\omega,R)|_{\nu_{w_{2}}}}{|Q(mR,R^{p+\vartheta})|_{\nu_{w_{2}}}}\leq\tau.
\end{align}
Due to the power gain of $1<p<2$, we obtain
\begin{align}\label{M900}
C_{\ast}\sigma^{\frac{2-p}{p+1}}\leq\frac{\bar{\gamma}}{4},\quad\text{if }0<\sigma\leq\sigma_{0}:=\Big(\frac{\bar{\gamma}}{4C_{\ast}}\Big)^{\frac{p+1}{2-p}}.
\end{align}
Making use of \eqref{A32}--\eqref{M900} and applying \eqref{D80} with
$$\tau=1,\quad \kappa_{0}=-\frac{\ln\Big(1-\frac{1}{\sqrt{1+\frac{\bar{\gamma}}{4(1-\bar{\gamma})}}}\Big)}{\ln2},$$
we deduce that for any $t\in[-R^{p+\vartheta},0]$,
\begin{align*}
&\frac{|B_{mR}\cap\{u(\cdot,t)>\mu^{+}-2^{-\kappa_{0}}\bar{\delta}\omega\}|_{\mu_{w_{1}}}}{|B_{mR}|_{\mu_{w_{1}}}}\leq\frac{1-\bar{\gamma}}{(1-2^{-\kappa_{0}})^{2}}+C_{\ast}\sigma^{\frac{2-p}{p+1}}\leq1-\frac{\bar{\gamma}}{2}.
\end{align*}
Therefore, \eqref{DQAF001} is proved.

\end{proof}

A combination of Lemmas \ref{LEM0035}, \ref{lem005} and \ref{LEM006} shows the following improvement on oscillation of $u$.
\begin{corollary}\label{AMZW01}
The constants $\sigma,A$ (and then $a_{0},m$) can be determined and there exists a constant $\kappa_{\ast}>1$, both depending only upon the above data, such that

$(i)$ if
\begin{align}\label{W008}
\frac{|B_{mR/2}\cap\{u(\cdot,-(R/2)^{p+\vartheta})>\mu^{+}-2^{-1}\omega\}|_{\mu_{w_{1}}}}{|B_{mR/2}|_{\mu_{w_{1}}}}\leq\frac{1}{2},
\end{align}
then we have either $\omega\leq \sigma^{-1}AR^{\varepsilon_{0}}=2^{\kappa_{\ast}}AR^{\varepsilon_{0}}$, or
\begin{align}\label{W009}
\sup\limits_{Q(mR/4,(R/4)^{p+\vartheta})}u\leq \mu^{+}-\frac{\omega}{2^{\kappa_{\ast}+1}};
\end{align}

$(ii)$ if
\begin{align*}
\frac{|B_{mR/2}\cap\{u(\cdot,-(R/2)^{p+\vartheta})<\mu^{-}+2^{-1}\omega\}|_{\mu_{w_{1}}}}{|B_{mR/2}|_{\mu_{w_{1}}}}\leq\frac{1}{2},
\end{align*}
then we have either $\omega\leq \sigma^{-1}AR^{\varepsilon_{0}}=2^{\kappa_{\ast}}AR^{\varepsilon_{0}}$, or
\begin{align}\label{W010}
\inf\limits_{Q(mR/4,(R/4)^{p+\vartheta})}u\geq \mu^{-}+\frac{\omega}{2^{\kappa_{\ast}+1}}.
\end{align}

\end{corollary}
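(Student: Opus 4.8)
The plan is to prove $(i)$, with $(ii)$ following by the mirror argument (swap $\mu^{+}\leftrightarrow\mu^{-}$, $(u-k)_{+}\leftrightarrow(u-k)_{-}$, and use \eqref{pro002} in place of \eqref{pro001} throughout Lemmas \ref{LEM0035}--\ref{LEM006}). One may assume throughout that $\omega>\sigma^{-1}AR^{\varepsilon_{0}}$ with $A\geq\mathcal{M}_{0}$ still at our disposal, since otherwise the first alternative already holds; under this standing assumption \eqref{M03} gives $mR\leq1$, and any side-alternative of the shape $\omega\leq\sigma^{-1}(\text{data constant})R^{\varepsilon_{0}}$ produced along the way will be absorbed by enlarging $A$ at the end. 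The heart of the argument is to chain the three ingredients in the order Lemma \ref{LEM006} $\to$ Lemma \ref{lem005} $\to$ Lemma \ref{LEM0035}, fixing the level parameter $\sigma$ (hence $a_{0}$, $m$, $\kappa_{\ast}$) only at the last step. First I would apply Lemma \ref{LEM006} with $\bar{\gamma}=\bar{\delta}=\tfrac12$ and with radius $R/2$, so that its hypothesis \eqref{WMDN001} becomes exactly \eqref{W008}; this furnishes the $\sigma$-independent constants $\sigma_{0}$ and $\kappa_{0}$ (the latter explicit, from \eqref{K90} with $\bar{\gamma}=\tfrac12$) and shows that for $0<\sigma\leq\sigma_{0}$ either $\omega\leq\sigma^{-1}\mathcal{M}_{0}R^{\varepsilon_{0}}$ or
\begin{align*}
\frac{|B_{mR/2}\cap\{u(\cdot,t)>\mu^{+}-2^{-\kappa_{0}-1}\omega\}|_{\mu_{w_{1}}}}{|B_{mR/2}|_{\mu_{w_{1}}}}\leq\tfrac34\qquad\text{for all }t\in[-(R/2)^{p+\vartheta},0].
\end{align*}
Feeding this into Lemma \ref{lem005} with $\delta=2^{-\kappa_{0}-1}$, $\gamma=\tfrac14$ and $\tau=2^{-(p+\vartheta)}\in(0,1]$ (so that $[-\tau R^{p+\vartheta},0]=[-(R/2)^{p+\vartheta},0]$ and hypothesis \eqref{D09} holds) produces, for every $j\geq1$, either the side-alternative $\omega\leq\sigma^{-1}\max\{\mathcal{M}_{0},2^{(\kappa_{0}+1+j)E}\}R^{\varepsilon_{0}}$ with $E:=\frac{pl_{0}(p+\vartheta)}{pl_{0}(2+\vartheta)+(p-2)(n+\theta_{3}+\theta_{4})}$, or a bound on the density of $\{u>\mu^{+}-2^{-\kappa_{0}-1-j}\omega\}$ over the sub-cylinder $Q(mR/2,(R/4)^{p+\vartheta})$ by $\overline{C}_{0}\,4^{1/q}2^{(p+\vartheta)/p}j^{-\frac{p-q}{pq}}$, which tends to $0$ as $j\to\infty$ at a rate depending only on the data.

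Next I would exploit that the oscillation-improvement constant $\gamma_{0}$ of Lemma \ref{LEM0035} depends only on the data and not on $\sigma$: choose $j=j_{0}$ so large that the last density bound is at most $\gamma_{0}$, after compensating by the bounded, $R$-independent ratio (furnished by Lemma \ref{LEM860}) between $Q(mR/2,(R/4)^{p+\vartheta})$ and the cylinder in hypothesis \eqref{ZWZ007}; enlarging $j_{0}$ further if necessary, also arrange $\sigma:=2^{-\kappa_{0}-1-j_{0}}\leq\sigma_{0}$, and set $\kappa_{\ast}:=\kappa_{0}+1+j_{0}>1$ (so $\sigma=2^{-\kappa_{\ast}}$) and $A$ equal to the maximum of $\mathcal{M}_{0}$ and the finitely many data-constants in the side-alternatives collected so far. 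With this choice the truncation level $\mu^{+}-2^{-\kappa_{0}-1-j_{0}}\omega$ coincides with the level $\mu^{+}-\sigma\omega$ in \eqref{ZWZ007}, so Lemma \ref{LEM0035}$(i)$ applies on the relevant sub-cylinder and gives either $\omega\leq\sigma^{-1}\mathcal{M}_{0}R^{\varepsilon_{0}}$ or $u\leq\mu^{+}-\tfrac{\sigma\omega}{2}=\mu^{+}-\tfrac{\omega}{2^{\kappa_{\ast}+1}}$ on $Q(mR/4,(R/4)^{p+\vartheta})$, which is \eqref{W009}. Since every side-alternative reads $\omega\leq\sigma^{-1}cR^{\varepsilon_{0}}$ with $c\leq A$, all of them are implied by $\omega\leq\sigma^{-1}AR^{\varepsilon_{0}}=2^{\kappa_{\ast}}AR^{\varepsilon_{0}}$, completing $(i)$; part $(ii)$ is identical with the sign changes above.

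The main obstacle I anticipate is the self-consistency of these parameter choices rather than any single estimate: one must know that $\gamma_{0}$, $\sigma_{0}$, $\kappa_{0}$, $\overline{C}_{0}$ are all independent of the level parameter $\sigma$ — precisely the reason Lemmas \ref{LEM0035}--\ref{LEM006} were formulated with $\sigma$ built into the intrinsic scaling factors $a_{0}$ and $m$ — so that one can freeze these constants, then pick $j_{0}$, and only afterwards fix $\sigma=2^{-\kappa_{\ast}}$ with no circularity. A secondary, routine difficulty is that the three lemmas are stated on cylinders of slightly different space--time proportions (Lemma \ref{LEM006} on $B_{mR}$-slices, Lemma \ref{lem005} on $Q(mR/2,\tau(R/2)^{p+\vartheta})$, Lemma \ref{LEM0035} on the self-similar $Q(mR,R^{p+\vartheta})$), so that transferring the density information between them costs only fixed multiplicative factors; these are controlled by the volume comparison of Lemma \ref{LEM860} and absorbed into $\gamma_{0}$ and $A$.
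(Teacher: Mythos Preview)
Your approach is essentially the paper's own: chain Lemma~\ref{LEM006} (with $\bar{\gamma}=\bar{\delta}=\tfrac12$ at radius $R/2$) $\to$ Lemma~\ref{lem005} (with $\delta=2^{-\kappa_0-1}$, $\gamma=\tfrac14$) $\to$ Lemma~\ref{LEM0035}, then choose $j_0$ large enough to hit $\gamma_0$ and to force $2^{-(\kappa_0+1+j_0)}\le\sigma_0$, set $\kappa_\ast=\kappa_0+1+j_0$, $\sigma=2^{-\kappa_\ast}$, and collect the side-alternatives into $A$. The self-consistency worry you flag is exactly the point the paper stresses, and you have it right.

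There is one genuine slip. With your choice $\tau=2^{-(p+\vartheta)}$ the conclusion \eqref{DEC001} lands on $Q(mR/2,(R/4)^{p+\vartheta})$, whereas Lemma~\ref{LEM0035} at radius $R/2$ needs \eqref{ZWZ007} on $Q(mR/2,(R/2)^{p+\vartheta})$. Your proposed ``volume-ratio compensation'' does not close this gap: a small bad-set density on a \emph{smaller} time interval says nothing about the density on the longer one, and the only information you have on the extra strip $[-(R/2)^{p+\vartheta},-(R/4)^{p+\vartheta}]$ is the coarse bound $3/4$ from \eqref{DQAF001}, which is far above $\gamma_0$. The fix is simply to take $\tau=1$, exactly as the paper does: inspection of the proof of Lemma~\ref{lem005} shows that hypothesis \eqref{D09} is only ever used for $t\in[-\tau(R/2)^{p+\vartheta},0]$ (the same interval appearing in the definition of $A_i$ and in the energy estimate), so with $\tau=1$ the hypothesis is precisely your output from Lemma~\ref{LEM006} and the conclusion is already on $Q(mR/2,(R/2)^{p+\vartheta})$, matching \eqref{ZWZ007} with no compensation needed. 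With that single change your proof coincides with the paper's.
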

\begin{proof}
In the following we only prove \eqref{W009}, since the proof of \eqref{W010} is the same and thus omitted. Utilizing \eqref{W008} and applying Lemma \ref{LEM006} with $\bar{\delta}=\bar{\gamma}=\frac{1}{2}$, we obtain that for any $0<\sigma\leq\sigma_{0}$, there holds either $\omega\leq\sigma^{-1}\mathcal{M}_{0}R^{\varepsilon_{0}}$, or
\begin{align}\label{W011}
\frac{|B_{mR/2}\cap\{u(\cdot,t)>\mu^{+}-\frac{\omega}{2^{\kappa_{0}+1}}\}|_{\mu_{w_{1}}}}{|B_{mR}|_{\mu_{w_{1}}}}\leq\frac{3}{4},\quad\text{for all }t\in[-(R/2)^{p+\vartheta},0],
\end{align}
where $\kappa_{0}:=-\frac{\ln(1-\frac{2}{\sqrt{5}})}{\ln2}$ is given by \eqref{K90} with $\bar{\gamma}=\frac{1}{2}$. Observe that there exists a large constant $j_{0}>3$ depending only on the data such that
\begin{align}\label{W012}
2^{-(\kappa_{0}+1+j_{0})}\leq\sigma_{0},\quad\overline{C}_{0}\sqrt[q]{4}j_{0}^{-\frac{p-q}{pq}}\leq\gamma_{0},
\end{align}
where $\gamma_{0}$, $\overline{C}_{0}$ and $\sigma_{0}$ are, respectively, given by Lemmas \ref{LEM0035}, \ref{lem005} and \ref{LEM006}. Therefore, in view of \eqref{W011}--\eqref{W012} and applying Lemma \ref{lem005} with $\delta=\frac{1}{2^{\kappa_{0}+1}}$, $\sigma=\frac{1}{2^{\kappa_{0}+1+j_{0}}}$, $\tau=1$, $\gamma=\frac{1}{4}$, and $j=j_{0}$, we have either $\omega\leq \sigma^{-1}AR^{\varepsilon_{0}}=2^{\kappa_{0}+1+j_{0}}AR^{\varepsilon_{0}}$ with $A$ given by $A:=\max\big\{\mathcal{M}_{0},2^{\frac{pl_{0}(\kappa_{0}+1+j_{0})(p+\vartheta)}{pl_{0}(2+\vartheta)+(p-2)(n+\theta_{3}+\theta_{4})}}\big\},$
or
\begin{align}\label{W013}
&\frac{|Q(mR/2,(R/2)^{p+\vartheta})\cap\{u>\mu^{+}-\frac{\omega}{2^{\kappa_{0}+1+j_{0}}}\}|_{\nu_{w_{1}}}}{|Q(mR/2,(R/2)^{p+\vartheta})|_{\nu_{w_{1}}}}\leq\overline{C}_{0}\sqrt[q]{4}j_{0}^{-\frac{p-q}{pq}}\leq\gamma_{0}.
\end{align}
In light of \eqref{W013} and applying Lemma \ref{LEM0035} with $\sigma=\frac{1}{2^{\kappa_{0}+1+j_{0}}}$, we derive either $\omega\leq \sigma^{-1}AR^{\varepsilon_{0}}=2^{\kappa_{0}+1+j_{0}}AR^{\varepsilon_{0}}$, or
\begin{align*}
u\leq \mu^{+}-\frac{\omega}{2^{\kappa_{0}+2+j_{0}}},\quad\mathrm{in}\text{ $Q(mR/4,(R/4)^{p+\vartheta})$.}
\end{align*}
The proof is finished by taking $\kappa_{\ast}:=\kappa_{0}+1+j_{0}$.

\end{proof}

\subsection{The proofs of Theorems \ref{ZWTHM90} and \ref{THM060}.}
Making use of Corollary \ref{AMZW01}, we establish the following oscillation estimates of the solution.
\begin{prop}\label{PRO30}
Let $R\in(0,\frac{1}{2}]$. Then we have either $\omega\leq 2^{\kappa_{\ast}}AR^{\varepsilon_{0}}$, or
\begin{align}\label{N90}
\mathop{osc}\limits_{Q(mR/4,(R/4)^{p+\vartheta})}u\leq\eta_{\ast}\omega,\quad\eta_{\ast}=1-\frac{1}{2^{\kappa_{\ast}+1}},
\end{align}
where $\kappa_{\ast}$ and $A$ are given by Corollary \ref{AMZW01}.
\end{prop}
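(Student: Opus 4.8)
The plan is to deduce Proposition \ref{PRO30} from Corollary \ref{AMZW01} by a measure-theoretic dichotomy at the time slice $t=-(R/2)^{p+\vartheta}$. Since $B_{mR/2}$ is the disjoint union of $B_{mR/2}\cap\{u(\cdot,-(R/2)^{p+\vartheta})>\mu^{+}-2^{-1}\omega\}$ and $B_{mR/2}\cap\{u(\cdot,-(R/2)^{p+\vartheta})\leq\mu^{+}-2^{-1}\omega\}$, and $|B_{mR/2}|_{\mu_{w_{1}}}$ is the sum of the two measures, at least one of these sets has $\mu_{w_{1}}$-measure at most $\tfrac12|B_{mR/2}|_{\mu_{w_{1}}}$. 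Moreover $\mu^{+}-2^{-1}\omega=\mu^{-}+2^{-1}\omega$, so the second set equals $B_{mR/2}\cap\{u(\cdot,-(R/2)^{p+\vartheta})\leq\mu^{-}+2^{-1}\omega\}$, which contains $B_{mR/2}\cap\{u(\cdot,-(R/2)^{p+\vartheta})<\mu^{-}+2^{-1}\omega\}$; hence a bound of the form $\leq\tfrac12|B_{mR/2}|_{\mu_{w_{1}}}$ for the former also holds for the latter.

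If the first set is small, then \eqref{W008} holds and Corollary \ref{AMZW01}$(i)$ gives the dichotomy: either $\omega\leq 2^{\kappa_{\ast}}AR^{\varepsilon_{0}}$, which is the first alternative in \eqref{N90}, or $\sup_{Q(mR/4,(R/4)^{p+\vartheta})}u\leq\mu^{+}-\omega/2^{\kappa_{\ast}+1}$. In the latter subcase I would use that, once $\omega>2^{\kappa_{\ast}}AR^{\varepsilon_{0}}=\sigma^{-1}AR^{\varepsilon_{0}}$ and $A\geq\mathcal{M}_{0}$, the nestings recalled in Section \ref{SEC03} give
\[
Q(mR/4,(R/4)^{p+\vartheta})\subset Q(mR,R^{p+\vartheta})\subset Q(a_{0}R,R^{p+\vartheta})\subset Q(R^{1-\bar{\varepsilon}_{0}},R^{p+\vartheta}),
\]
so that $\inf_{Q(mR/4,(R/4)^{p+\vartheta})}u\geq\mu^{-}$, and therefore
\[
\mathop{osc}\limits_{Q(mR/4,(R/4)^{p+\vartheta})}u\leq\Big(\mu^{+}-\frac{\omega}{2^{\kappa_{\ast}+1}}\Big)-\mu^{-}=\Big(1-\frac{1}{2^{\kappa_{\ast}+1}}\Big)\omega=\eta_{\ast}\omega .
\]
If instead the second set is small, the inclusion noted in the first paragraph shows that the hypothesis of Corollary \ref{AMZW01}$(ii)$ is satisfied, and the symmetric argument — combining $\inf_{Q(mR/4,(R/4)^{p+\vartheta})}u\geq\mu^{-}+\omega/2^{\kappa_{\ast}+1}$ with $\sup_{Q(mR/4,(R/4)^{p+\vartheta})}u\leq\mu^{+}$ — again produces either $\omega\leq2^{\kappa_{\ast}}AR^{\varepsilon_{0}}$ or $\mathop{osc}\limits_{Q(mR/4,(R/4)^{p+\vartheta})}u\leq\eta_{\ast}\omega$. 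Since the two cases are exhaustive, this proves the proposition.

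The argument is essentially bookkeeping, and I expect the only delicate points to be organizational: verifying that the ``$\leq$'' level set produced by the dichotomy dominates the ``$<$'' level set appearing in Corollary \ref{AMZW01}$(ii)$, and checking the chain of cylinder inclusions that legitimizes replacing $\sup/\inf$ over $Q(mR/4,(R/4)^{p+\vartheta})$ by $\mu^{+}/\mu^{-}$. Both are harmless because in the nontrivial case the negation of the first alternative, namely $\omega>2^{\kappa_{\ast}}AR^{\varepsilon_{0}}$, is precisely the condition under which those inclusions were established.
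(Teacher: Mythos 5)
Your argument is correct and takes essentially the same route as the paper's proof: both split the time slice $t=-(R/2)^{p+\vartheta}$ by the dichotomy that at least one of the two (disjoint) level sets $\{u>\mu^{+}-\omega/2\}$, $\{u<\mu^{-}+\omega/2\}$ has $\mu_{w_1}$-measure at most half, then invoke Corollary~\ref{AMZW01}(i) or (ii) accordingly. You merely make explicit two small steps the paper leaves implicit — the $\leq$-versus-$<$ comparison of level sets, and the passage from a one-sided bound on $\sup$ (resp.\ $\inf$) to the oscillation bound via $\inf\geq\mu^{-}$ (resp.\ $\sup\leq\mu^{+}$) on the nested cylinder — both of which are correct.
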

\begin{proof}
Observe first that one of the following two inequalities must hold:
\begin{align}\label{WAMQ001}
|B_{mR/2}\cap\{u(\cdot,-(R/2)^{p+\vartheta})>\mu^{+}-2^{-1}\omega\}|_{\mu_{w_{1}}}\leq\frac{1}{2}|B_{mR/2}|_{\mu_{w_{1}}},
\end{align}
and
\begin{align}\label{WAMQ002}
|B_{mR/2}\cap\{u(\cdot,-(R/2)^{p+\vartheta})<\mu^{-}+2^{-1}\omega\}|_{\mu_{w_{1}}}\leq\frac{1}{2}|B_{mR/2}|_{\mu_{w_{1}}}.
\end{align}
It then follows from Corollary \ref{AMZW01} that if $\omega>2^{\kappa_{\ast}}AR^{\varepsilon_{0}}$,
\begin{align*}
\sup\limits_{Q(mR/4,(R/4)^{p+\vartheta})}u\leq\mu^{+}-\frac{\omega}{2^{\kappa_{\ast}+1}},\quad\text{when \eqref{WAMQ001} holds,}
\end{align*}
and
\begin{align*}
\inf\limits_{Q(mR/4,(R/4)^{p+\vartheta})}u\geq\mu^{-}+\frac{\omega}{2^{\kappa_{\ast}+1}},\quad\text{when \eqref{WAMQ002} holds.}
\end{align*}
In either case, we all have
\begin{align*}
\mathop{osc}\limits_{Q(mR/4,(R/4)^{p+\vartheta})}u\leq\Big(1-\frac{1}{2^{k_{\ast}+1}}\Big)\omega.
\end{align*}
The proof is complete.

\end{proof}

Based on Proposition \ref{PRO30}, we are now ready to construct a family of nested and shrinking cylinders with the same vertex, which makes the essential oscillation of $u$ in these cylinders converge to zero as the radius of the cylinder approaches zero. Define
\begin{align}\label{K99}
A_{\ast}:=2^{\kappa_{\ast}}A,\quad \omega_{0}:=\max\{\omega,A_{\ast}R^{\varepsilon_{0}}\},
\end{align}
and, for $k\geq0$,
\begin{align}\label{K98}
R_{k}:=A_{\ast}^{-k}R,\quad \omega_{k+1}:=\max\{\eta_{\ast}\omega_{k},A_{\ast}R_{k}^{\varepsilon_{0}}\},\quad \tilde{a}_{k}:=\Big(\frac{\omega_{k}}{A_{\ast}}\Big)^{\frac{p-2}{p+\vartheta}}.
\end{align}
In light of \eqref{K99}--\eqref{K98}, we have
\begin{align*}
\tilde{a}_{k+1}R_{k+1}=&\Big(\frac{\omega_{k+1}}{A_{\ast}}\Big)^{\frac{p-2}{p+\vartheta}}\frac{R_{k}}{A_{\ast}}\leq\frac{\tilde{a}_{k}R_{k}}{\eta_{\ast}^{\frac{2-p}{p+\vartheta}}A_{\ast}}< \tilde{a}_{k}R_{k},
\end{align*}
which shows that $Q(\tilde{a}_{k+1}R_{k+1},R_{k+1}^{p+\vartheta})\subset Q(\tilde{a}_{k}R_{k},R_{k}^{p+\vartheta})$.
\begin{lemma}\label{LEM83}
For every $k=0,1,2,...,$ we have
\begin{align}\label{OSC98}
\mathop{osc}\limits_{Q(\tilde{a}_{k}R_{k},R_{k}^{p+\vartheta})}u\leq\omega_{k}.
\end{align}

\end{lemma}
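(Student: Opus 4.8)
The plan is to prove \eqref{OSC98} by induction on $k$, using Proposition \ref{PRO30} as the engine at each step. The base case $k=0$ holds because, by the definitions in \eqref{Z999} and \eqref{K98}, $\tilde a_0 R_0 = a_0 R$ (with $\sigma, A$ replaced by the concrete values $1$ and $A_\ast$ fixed in Corollary \ref{AMZW01}), so $Q(\tilde a_0 R_0, R_0^{p+\vartheta}) = Q(a_0 R, R^{p+\vartheta}) \subset Q(R^{1-\bar\varepsilon_0}, R^{p+\vartheta})$ whenever $\omega > \sigma^{-1} A R^{\varepsilon_0}$, hence the oscillation there is at most $\omega \le \omega_0$; and if instead $\omega \le A_\ast R^{\varepsilon_0}$, then $\omega_0 = A_\ast R^{\varepsilon_0} \ge \omega$ and the estimate is trivial. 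In either case $\mathop{osc}_{Q(\tilde a_0 R_0, R_0^{p+\vartheta})} u \le \omega_0$.

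For the inductive step, assume \eqref{OSC98} holds for $k$. The point is that the cylinder $Q(\tilde a_k R_k, R_k^{p+\vartheta})$ plays the role of the reference cylinder $Q(R^{1-\bar\varepsilon_0}, R^{p+\vartheta})$ in the setup preceding Corollary \ref{AMZW01}, but with $R$ replaced by $R_k$, with the ``ambient oscillation'' bounded by $\omega_k$ (by the inductive hypothesis) rather than $\omega$, and with the scaling factor $a_0$ replaced by $\tilde a_k = (\omega_k/A_\ast)^{(p-2)/(p+\vartheta)}$, which is exactly the value of $a_0$ when one takes $\sigma=1$, $A=A_\ast$, and oscillation $\omega_k$. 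Thus I would re-run the argument of Proposition \ref{PRO30} on $Q(\tilde a_k R_k, R_k^{p+\vartheta})$: one of the two alternatives \eqref{WAMQ001}--\eqref{WAMQ002} holds at the time slice $t=-(R_k/2)^{p+\vartheta}$, and Corollary \ref{AMZW01} then yields that either $\omega_k \le 2^{\kappa_\ast} A_\ast^{?} R_k^{\varepsilon_0}$ — more precisely the bad alternative $\omega_k \le \sigma^{-1} A R_k^{\varepsilon_0}$ which with the chosen constants reads $\omega_k \le A_\ast R_k^{\varepsilon_0}$ — or $\mathop{osc}_{Q(\tilde a_k R_k/4,(R_k/4)^{p+\vartheta})} u \le \eta_\ast \omega_k$. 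Since $R_{k+1} = R_k/A_\ast \le R_k/4$ (as $A_\ast \ge 2^{\kappa_\ast} \ge 4$) and $\tilde a_{k+1} R_{k+1} < \tilde a_k R_k$ with, more to the point, $\tilde a_{k+1} R_{k+1} \le \tilde a_k R_k/4$ after checking the numerics (using $\eta_\ast < 1$ and $A_\ast$ large), we get $Q(\tilde a_{k+1} R_{k+1}, R_{k+1}^{p+\vartheta}) \subset Q(\tilde a_k R_k/4, (R_k/4)^{p+\vartheta})$, so in the good case the oscillation over the smaller cylinder is $\le \eta_\ast \omega_k \le \omega_{k+1}$. In the bad case $\omega_k \le A_\ast R_k^{\varepsilon_0}$, we simply bound $\mathop{osc}_{Q(\tilde a_{k+1} R_{k+1}, R_{k+1}^{p+\vartheta})} u \le \mathop{osc}_{Q(\tilde a_k R_k, R_k^{p+\vartheta})} u \le \omega_k \le A_\ast R_k^{\varepsilon_0} \le \omega_{k+1}$ by the definition of $\omega_{k+1}$ in \eqref{K98}. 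This closes the induction.

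The step I expect to be the main obstacle is the bookkeeping needed to legitimately invoke Corollary \ref{AMZW01} at level $k$: that corollary was stated for the fixed ambient oscillation $\omega$ over $Q(R^{1-\bar\varepsilon_0},R^{p+\vartheta})$, and I must verify that it applies verbatim with $R \rightsquigarrow R_k$ and $\omega \rightsquigarrow \omega_k$, i.e. that all the inclusions used in its proof (and in Lemmas \ref{LEM0035}, \ref{lem005}, \ref{LEM006}) survive after this substitution — in particular that $Q(\tilde a_k R_k, R_k^{p+\vartheta}) \subset Q(1,1)$ so the global $L^\infty$ bound $\mathcal M_0$ is unchanged, that $\tilde a_k \ge 1$ would not be needed (in fact $\tilde a_k$ may exceed $1$ since $\omega_k$ can be small and $p<2$, but the relevant inclusion $Q(\tilde a_k R_k,\cdot)\subset Q(\tilde a_{k-1}R_{k-1},\cdot)$ was already checked), and that $mR$ in those lemmas corresponds here to $\tilde a_k R_k$ with the parameter $m$ of \eqref{M01} replaced by $\tilde a_k$ itself (valid because the level parameter $\sigma$ was built into $a_0$ precisely so that $\gamma_0$ etc.\ are $\sigma$-independent). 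Once this correspondence is set up cleanly — essentially one remark that the entire Subsection on the three ingredients is scale-covariant under $(R,\omega)\mapsto(R_k,\omega_k)$ — the induction is routine, and the quantitative check that $\tilde a_{k+1}R_{k+1}\le \tfrac14 \tilde a_k R_k$, which follows from $A_\ast\ge 4$ and $\eta_\ast^{(2-p)/(p+\vartheta)}\le 1$, completes the argument.
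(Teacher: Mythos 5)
Your proof follows the same overall inductive strategy as the paper's, but it contains a material slip in identifying the cylinder over which Corollary \ref{AMZW01} yields the oscillation improvement at scale $R_k$. You claim the improved oscillation holds over $Q(\tilde a_k R_k/4,(R_k/4)^{p+\vartheta})$, asserting that ``$mR$ in those lemmas corresponds here to $\tilde a_k R_k$ with the parameter $m$ of \eqref{M01} replaced by $\tilde a_k$ itself.'' This is not correct. The quantity $m$ of \eqref{M01}, re-evaluated at oscillation $\omega_k$, is $m_k:=(\sigma\omega_k/\mathcal M_0)^{(p-2)/(p+\vartheta)}=(\omega_k/\mathcal M_\ast)^{(p-2)/(p+\vartheta)}$ with $\mathcal M_\ast:=2^{\kappa_\ast}\mathcal M_0$, whereas $\tilde a_k=(\omega_k/A_\ast)^{(p-2)/(p+\vartheta)}$ with $A_\ast=2^{\kappa_\ast}A$. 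Since $A\geq\mathcal M_0$, we have $\mathcal M_\ast\leq A_\ast$ and hence, by the negative exponent, $m_k\leq\tilde a_k$: the corollary improves the oscillation only over the \emph{smaller} cylinder $Q(m_k R_k/4,(R_k/4)^{p+\vartheta})$. The paper introduces $m_i$ precisely for this reason and verifies the genuinely needed nesting $\tilde a_{i+1}R_{i+1}\leq m_i R_i/4$, using $\omega_{i+1}\geq\eta_\ast\omega_i$, $1<p<2$, and the specific sizes of $A_\ast$ and $\mathcal M_\ast$.

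Your stated quantitative check $\tilde a_{k+1}R_{k+1}\leq\tfrac14\tilde a_kR_k$ is therefore weaker than what the argument actually requires, and the reason you offer for it is also off: the ratio computes to $(\omega_{k+1}/\omega_k)^{(p-2)/(p+\vartheta)}\cdot 4/A_\ast\leq\eta_\ast^{(p-2)/(p+\vartheta)}\cdot 4/A_\ast$, where $\eta_\ast^{(p-2)/(p+\vartheta)}=\eta_\ast^{-(2-p)/(p+\vartheta)}>1$, so the factor $\eta_\ast^{(2-p)/(p+\vartheta)}\leq1$ works \emph{against} you, not for you; you need $A_\ast\geq 4\,\eta_\ast^{-(2-p)/(p+\vartheta)}$, which indeed holds because $A_\ast=2^{\kappa_\ast}A$ with $\kappa_\ast>4$, but not merely because $A_\ast\geq 4$. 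Once you replace $\tilde a_k$ by $m_k$ in the conclusion of the corollary and prove the sharper inclusion $\tilde a_{k+1}R_{k+1}\leq m_kR_k/4$ as the paper does, the induction closes; your base case and treatment of the ``bad'' alternative $\omega_k\leq A_\ast R_k^{\varepsilon_0}$ are fine.
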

\begin{proof}
From \eqref{K98}, we see that $\tilde{a}_{0}\leq a_{0}$. Then \eqref{OSC98} is obviously valid for $k=0$. Assume that \eqref{OSC98} holds for $k=i$ with $i\geq1$. Then we prove that it also holds in the case when $k=i+1$. According to the assumption, we have $\mathop{osc}\limits_{Q(\tilde{a}_{i}R_{i},R_{i}^{p+\vartheta})}u\leq\omega_{i}$. Then applying the proof of Proposition \ref{PRO30} with a slight modification, we obtain
\begin{align}\label{AEM81}
\mathop{osc}\limits_{Q(m_{i}R_{i}/4,(R_{i}/4)^{p+\vartheta})}u\leq\max\{\eta_{\ast}\omega_{i},A_{\ast}R_{i}^{\varepsilon_{0}}\}=\omega_{i+1},\quad m_{i}:=\Big(\frac{\omega_{i}}{\mathcal{M}_{\ast}}\Big)^{\frac{p-2}{p+\vartheta}},
\end{align}
where $\mathcal{M}_{\ast}:=2^{\kappa_{\ast}}\mathcal{M}_{0}.$ Since $\omega_{i+1}\geq\eta_{\ast}\omega_{i}$ and $1<p<2$, then we have
\begin{align*}
&\frac{m_{i}R_{i}}{4}=\Big(\frac{\omega_{i}}{\mathcal{M}_{\ast}}\Big)^{\frac{p-2}{p+\vartheta}}\frac{R_{i}}{4}\geq \Big(\frac{\omega_{i+1}}{A_{\ast}}\Big)^{\frac{p-2}{p+\vartheta}}R_{i+1}\frac{A_{\ast}^{\frac{2p+\vartheta-2}{p+\vartheta}}(\eta_{\ast}\mathcal{M}_{\ast})^{\frac{2-p}{p+\vartheta}}}{4}\geq \tilde{a}_{i+1}R_{i+1}.
\end{align*}
This, together with \eqref{AEM81}, reads that
\begin{align*}
\mathop{osc}\limits_{Q(\tilde{a}_{i+1}R_{i+1},R_{i+1}^{p+\vartheta})}u\leq\omega_{i+1}.
\end{align*}
The proof is finished.

\end{proof}

Making use of Lemma \ref{LEM83} and applying the proof of Proposition 4.11 in \cite{MZ202303} with minor modification, we further obtain the oscillation estimates of $u$ with certain decay rate as follows.
\begin{lemma}\label{PRO90}
Assume as in Theorems \ref{ZWTHM90} and \ref{THM060}. Then for any $0<\rho\leq R\leq\frac{1}{2}$,
\begin{align*}
\mathop{osc}\limits_{Q(\tilde{a}_{0}\rho,\rho^{p+\vartheta})}u\leq \max\big\{\eta_{\ast}^{-1}\omega_{0},A_{\ast}^{1+2\varepsilon_{\ast}}R^{\varepsilon_{\ast}}\big\}\left(\frac{\rho}{R}\right)^{\varepsilon_{\ast}},
\end{align*}
where $\eta_{\ast}$ is defined by \eqref{N90}, $A_{\ast},\omega_{0}$ and $\tilde{a}_{0}$ are, respectively, given by \eqref{K99}--\eqref{K98}, and
\begin{align*}
\varepsilon_{\ast}:=\min\Big\{\varepsilon_{0},-\frac{\ln\eta_{\ast}}{\ln A_{\ast}}\Big\}.
\end{align*}

\end{lemma}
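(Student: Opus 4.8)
The plan is to iterate the oscillation decay from Lemma \ref{LEM83} across the geometric scales $R_k = A_\ast^{-k}R$ and then fill in the intermediate radii $\rho\in(R_{k+1},R_k]$. First I would record the elementary consequence of \eqref{K98}: unrolling the recursion $\omega_{k+1}=\max\{\eta_\ast\omega_k,\,A_\ast R_k^{\varepsilon_0}\}$ gives, for every $k\ge0$,
\begin{align*}
\omega_k\le\max\Big\{\eta_\ast^{k}\omega_0,\ \max_{0\le j\le k-1}\eta_\ast^{\,k-1-j}A_\ast R_j^{\varepsilon_0}\Big\}
=\max\Big\{\eta_\ast^{k}\omega_0,\ A_\ast R^{\varepsilon_0}\max_{0\le j\le k-1}\eta_\ast^{\,k-1-j}A_\ast^{-j\varepsilon_0}\Big\}.
\end{align*}
Since $\varepsilon_\ast=\min\{\varepsilon_0,-\ln\eta_\ast/\ln A_\ast\}$ we have both $\eta_\ast\le A_\ast^{-\varepsilon_\ast}$ and $A_\ast^{-\varepsilon_0}\le A_\ast^{-\varepsilon_\ast}$, so each term in the inner maximum is bounded by $\eta_\ast^{-1}A_\ast^{1+\varepsilon_\ast}R^{\varepsilon_0}(A_\ast^{-k})^{\varepsilon_\ast}\cdot A_\ast^{\varepsilon_\ast}$ after a short computation, while $\eta_\ast^{k}\omega_0\le\omega_0(A_\ast^{-k})^{\varepsilon_\ast}$. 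Hence for all $k$,
\begin{align*}
\omega_k\le \max\big\{\omega_0,\ \eta_\ast^{-1}A_\ast^{1+2\varepsilon_\ast}R^{\varepsilon_\ast}\big\}\,A_\ast^{-k\varepsilon_\ast}
\le \max\big\{\eta_\ast^{-1}\omega_0,\ A_\ast^{1+2\varepsilon_\ast}R^{\varepsilon_\ast}\big\}\,\Big(\frac{R_k}{R}\Big)^{\varepsilon_\ast}.
\end{align*}

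Next, given $0<\rho\le R$, choose $k\ge0$ with $R_{k+1}<\rho\le R_k$. The cylinders are nested: from \eqref{K98} one checks $\tilde a_{k+1}R_{k+1}<\tilde a_kR_k$, and since $\rho\le R_k$ and $\tilde a_0\le\tilde a_k$ (because $\omega_k\le\omega_0\le\omega$ forces nothing directly on $\tilde a_k$, so here I would instead use that $\tilde a_0$ is the \emph{smallest} such factor by monotonicity of $t\mapsto t^{(p-2)/(p+\vartheta)}$ applied to $\omega_k\le\omega_0$, giving $\tilde a_k\ge\tilde a_0$), one has $Q(\tilde a_0\rho,\rho^{p+\vartheta})\subset Q(\tilde a_kR_k,R_k^{p+\vartheta})$. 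Therefore by Lemma \ref{LEM83},
\begin{align*}
\mathop{osc}\limits_{Q(\tilde a_0\rho,\rho^{p+\vartheta})}u\le\mathop{osc}\limits_{Q(\tilde a_kR_k,R_k^{p+\vartheta})}u\le\omega_k\le\max\big\{\eta_\ast^{-1}\omega_0,\ A_\ast^{1+2\varepsilon_\ast}R^{\varepsilon_\ast}\big\}\Big(\frac{R_k}{R}\Big)^{\varepsilon_\ast}.
\end{align*}
Finally, since $\rho>R_{k+1}=A_\ast^{-1}R_k$, we have $R_k<A_\ast\rho$, so $(R_k/R)^{\varepsilon_\ast}<A_\ast^{\varepsilon_\ast}(\rho/R)^{\varepsilon_\ast}$; absorbing the extra $A_\ast^{\varepsilon_\ast}$ into the constant (adjusting $A_\ast^{1+2\varepsilon_\ast}$ and $\eta_\ast^{-1}$ appropriately, or simply noting $A_\ast^{\varepsilon_\ast}\le A_\ast$ and tracking constants) yields the claimed bound.

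The main obstacle I anticipate is bookkeeping rather than conceptual: one must verify the nesting $Q(\tilde a_0\rho,\rho^{p+\vartheta})\subset Q(\tilde a_kR_k,R_k^{p+\vartheta})$ cleanly — in particular confirming $\tilde a_k\ge\tilde a_0$ from $\omega_k\le\omega_0$ and the sign of the exponent $(p-2)/(p+\vartheta)<0$ — and then sharpening the iteration estimate so that the floor term $A_\ast R_k^{\varepsilon_0}$ in the recursion is correctly converted into a clean $(\rho/R)^{\varepsilon_\ast}$ decay with the stated constant $A_\ast^{1+2\varepsilon_\ast}R^{\varepsilon_\ast}$. This is exactly the point where the choice $\varepsilon_\ast=\min\{\varepsilon_0,-\ln\eta_\ast/\ln A_\ast\}$ is used, and I would follow the corresponding computation in the proof of Proposition 4.11 of \cite{MZ202303} with the obvious substitutions $R\mapsto R_k$, $2\mapsto A_\ast$, $\eta\mapsto\eta_\ast$.
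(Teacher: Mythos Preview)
Your approach is correct and matches what the paper does: the paper itself gives no details and simply refers to Proposition~4.11 of \cite{MZ202303}, which is precisely this standard iteration of Lemma~\ref{LEM83} over the scales $R_k=A_\ast^{-k}R$ followed by interpolation to intermediate $\rho$. Two small bookkeeping slips to clean up: first, you write ``$\omega_k\le\omega_0\le\omega$'', but in fact $\omega_0=\max\{\omega,A_\ast R^{\varepsilon_0}\}\ge\omega$; only $\omega_k\le\omega_0$ is needed (and true, by induction using $\omega_k\ge A_\ast R_{k-1}^{\varepsilon_0}\ge A_\ast R_k^{\varepsilon_0}$), and this gives $\tilde a_k\ge\tilde a_0$ since $(p-2)/(p+\vartheta)<0$. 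Second, the displayed step
\[
\max\{\omega_0,\ \eta_\ast^{-1}A_\ast^{1+2\varepsilon_\ast}R^{\varepsilon_\ast}\}\le\max\{\eta_\ast^{-1}\omega_0,\ A_\ast^{1+2\varepsilon_\ast}R^{\varepsilon_\ast}\}
\]
is false in general; the clean fix is to keep only $\omega_k\le\max\{\omega_0,\,A_\ast^{1+\varepsilon_\ast}R^{\varepsilon_\ast}\}A_\ast^{-k\varepsilon_\ast}$ at this stage, then use $R_k<A_\ast\rho$ to get the extra factor $A_\ast^{\varepsilon_\ast}$, and finally apply $A_\ast^{\varepsilon_\ast}\le\eta_\ast^{-1}$ (from $\varepsilon_\ast\le-\ln\eta_\ast/\ln A_\ast$) to the $\omega_0$ term, yielding exactly $\max\{\eta_\ast^{-1}\omega_0,\,A_\ast^{1+2\varepsilon_\ast}R^{\varepsilon_\ast}\}(\rho/R)^{\varepsilon_\ast}$.
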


\begin{remark}\label{REM10}
By a translation and applying the proof of Lemma \ref{PRO90} with minor modification, we derive that for any $0<\rho\leq R\leq\frac{1}{2}$ and $t_{0}\in[-\frac{1}{2},0]$,
\begin{align*}
\mathop{osc}\limits_{[(0,t_{0})+Q(\tilde{a}_{0}\rho,\rho^{p+\vartheta})]}u\leq \max\big\{\eta_{\ast}^{-1}\omega_{0},A_{\ast}^{1+2\varepsilon_{\ast}}R^{\varepsilon_{\ast}}\big\}\left(\frac{\rho}{R}\right)^{\varepsilon_{\ast}}.
\end{align*}
\end{remark}

We are now ready to give the proofs of Theorems \ref{ZWTHM90} and \ref{THM060}, respectively.
\begin{proof}[Proof of Theorem \ref{ZWTHM90}]
In light of $1<p<2$, it follows from \eqref{M01} and \eqref{K99} that for any $0<R\leq\frac{1}{2}$,
\begin{align*}
1\leq\left(\frac{A_{\ast}}{\max\{\mathcal{M}_{0},A_{\ast}2^{-\varepsilon_{0}}\}}\right)^{\frac{2-p}{p+\vartheta}}\leq\left(\frac{A_{\ast}}{\max\{\omega,A_{\ast}R^{\varepsilon_{0}}\}}\right)^{\frac{2-p}{p+\vartheta}}=\tilde{a}_{0}.
\end{align*}
Consequently, a combination of Theorem \ref{CORO06}, Lemma \ref{PRO90} and Remark \ref{REM10} leads to that there exists a constant $0<\alpha\leq\varepsilon_{0}$ depending only upon the data such that for any $t_{0}\in(-1/2,0]$ and $\rho\in(0,\frac{1}{2}]$,
\begin{align*}
\mathop{osc}\limits_{[(0,t_{0})+Q(\rho,\rho^{p+\vartheta})]}u\leq C\rho^{\alpha}.
\end{align*}
This yields that for any $(x,t)\in B_{1/2}\times(-1/2,t_{0}]$,

$(i)$ if $|t-t_{0}|\leq2^{-(p+\vartheta)}$, then
\begin{align*}
|u(x,t)-u(0,t_{0})|\leq&|u(x,t)-u(x,t_{0})|+|u(x,t_{0})-u(0,t_{0})|\notag\\
\leq& C(|t-t_{0}|^{\frac{\alpha}{p+\vartheta}}+|x|^{\alpha})\leq C\big(|x|+|t-t_{0}|^{\frac{1}{p+\vartheta}}\big)^{\alpha};
\end{align*}

$(ii)$ if $|t-t_{0}|>2^{-(p+\vartheta)}$, there exists a increasing set $\{t_{i}\}_{i=1}^{\Lambda}$, $1\leq\Lambda\leq[2^{p+\vartheta-1}]+1$ such that $t<t_{1}\leq\cdots\leq t_{\Lambda}<t_{0}$,
\begin{align*}
|u(x,t)-u(0,t_{0})|\leq&|u(x,t)-u(x,t_{1})|+|u(x,t_{1})-u(x,t_{0})|+|u(x,t_{0})-u(0,t_{0})|\notag\\
\leq&C\big(|t-t_{1}|^{\frac{\alpha}{p+\vartheta}}+|t_{1}-t_{0}|^{\frac{\alpha}{p+\vartheta}}+|x|^{\frac{\alpha}{p+\vartheta}}\big)\notag\\
\leq&C\big(|x|+|t-t_{0}|^{\frac{1}{p+\vartheta}}\big)^{\alpha},\quad\text{if}\;\Lambda=1,
\end{align*}
and
\begin{align*}
|u(x,t)-u(0,t_{0})|&\leq|u(x,t)-u(x,t_{1})|+\sum^{\Lambda-1}_{i=1}|u(x,t_{i})-u(x,t_{i+1})|\notag\\
&\quad+|u(x,t_{\Lambda})-u(x,t_{0})|+|u(x,t_{0})-u(0,t_{0})|\notag\\
&\leq C\Big(|t-t_{1}|^{\frac{\alpha}{p+\vartheta}}+\sum^{\Lambda-1}_{i=1}|t_{i}-t_{i+1}|^{\frac{\alpha}{p+\vartheta}}+|t_{\Lambda}-t_{0}|^{\frac{\alpha}{p+\vartheta}}+|x|^{\frac{\alpha}{p+\vartheta}}\Big)\notag\\
&\leq C\big(|x|+|t-t_{0}|^{\frac{1}{p+\vartheta}}\big)^{\alpha},\quad \text{if}\;\Lambda\geq2.
\end{align*}
The proof is complete.

\end{proof}

\begin{proof}[Proof of Theorem \ref{THM060}]
With regard to the detailed proof in the case of $(w_{1},w_{2})=(|x'|^{\theta_{1}},|x'|^{\theta_{3}})$, see \cite{MZ202303}. In the following we give the proof in the case when $(w_{1},w_{2})=(|x|^{\theta_{2}},|x|^{\theta_{4}})$ for readers' convenience. In this case, we have $\theta_{1}=\theta_{3}=0$ and $\vartheta=\theta_{2}-\theta_{4}$. For $R\in(0,1/2)$ and $(y,s)\in Q(1/R,1/R^{p+\vartheta})$, set $u_{R}(y,s)=u(Ry,R^{p+\vartheta}s)$ and $\phi_{3,R}(y,s)=\phi_{3}(Ry,R^{p+\vartheta}s)$. Hence $u_{R}$ satisfies
\begin{align*}
|y|^{\theta_{2}}\partial_{s}u_{R}-\mathrm{div}(|y|^{\theta_{4}}|\nabla u_{R}|^{p-2}\nabla u_{R})=|y|^{\theta_{4}}\left(\lambda_{3}R|\nabla u_{R}|^{p-1}+R^{p}\phi_{3,R}\right),
\end{align*}
for $(y,s)\in Q(1/R,1/R^{p+\vartheta})$.

For any fixed $(x,t),(\tilde{x},\tilde{t})\in B_{1/2}\times(-1/2,0],$ assume without loss of generality that $|\tilde{x}|\leq|x|$. Write $R=|x|$. Making use of the proof of Theorem \ref{ZWTHM90} with a slight modification, we deduce that there exist two constants $0<\gamma<1$ and $C>1$, both depending only upon the data, such that for any given $\bar{y}\in \partial B_{1}$ and $\bar{s}\in(-2^{-1} R^{-(p+\vartheta)},0]$,
\begin{align}\label{WAQA001}
|u_{R}(y,s)-u_{R}(\bar{y},\bar{s})|\leq C\big(|y-\bar{y}|+|s-\bar{s}|^{1/p}\big)^{\gamma},
\end{align}
for any $(y,s)$ satisfying that $|y-\bar{y}|+|s-\bar{s}|^{1/p}\leq1/2$. We now restrict the range of $\gamma$ to be in $(0,\alpha]$. Otherwise, if $\gamma>\alpha$, then \eqref{WAQA001} is also valid for any $\gamma\in(0,\alpha]$.

Observe that
\begin{align}\label{QT01}
|u(x,t)-u(\tilde{x},\tilde{t})|\leq&|u(x,t)-u(x,\tilde{t})|+|u(x,\tilde{t})-u(\tilde{x},\tilde{t})|.
\end{align}
Let $c_{1}\geq1+\frac{p}{p+\vartheta}$. If $|t-\tilde{t}|\leq R^{c_{1}(p+\vartheta)}$, we have from \eqref{WAQA001} that
\begin{align*}
&|u(x,t)-u(x,\tilde{t})|=\left|u_{R}(x/R,t/R^{p+\vartheta})-u_{R}(x/R,\tilde{t}/R^{p+\vartheta})\right|\notag\\
&\leq C|(t-\tilde{t})/R^{p+\vartheta}|^{\frac{\gamma}{p}}\leq C|t-\tilde{t}|^{\frac{(c_{1}-1)\gamma}{c_{1}p}}.
\end{align*}
By comparison, if $|t-\tilde{t}|>R^{c_{1}(p+\vartheta)}$, applying Theorem \ref{ZWTHM90} with $\theta_{1}=\theta_{3}=0$, we derive
\begin{align*}
&|u(x,t)-u(x,\tilde{t})|\notag\\
&\leq|u(x,t)-u(0,t)|+|u(0,t)-u(0,\tilde{t})|+|u(0,\tilde{t})-u(x,\tilde{t})|\notag\\
&\leq C\big(R^{\alpha}+|t-\tilde{t}|^{\frac{\alpha}{p+\vartheta}}\big)\leq C|t-\tilde{t}|^{\frac{\alpha}{c_{1}(p+\vartheta)}}.
\end{align*}

It remains to calculate the second term in \eqref{QT01}. Pick $c_{2}\geq2$. If $|x-\tilde{x}|\leq R^{c_{2}}$, a consequence of \eqref{WAQA001} gives that
\begin{align*}
&|u(x,\tilde{t})-u(\tilde{x},\tilde{t})|=\left|u_{R}(x/R,\tilde{t}/R^{p+\vartheta})-u_{R}(\tilde{x}/R,\tilde{t}/R^{p+\vartheta})\right|\notag\\
&\leq C|(x-\tilde{x})/R|^{\gamma}\leq C|x-\tilde{x}|^{\frac{(c_{2}-1)\gamma}{c_{2}}},
\end{align*}
while, if $|x-\tilde{x}|>R^{c_{2}}$, using Theorem \ref{ZWTHM90} with $\theta_{1}=\theta_{3}=0$ again, we have
\begin{align*}
|u(x,\tilde{t})-u(\tilde{x},\tilde{t})|\leq&|u(x,\tilde{t})-u(0,\tilde{t})|+|u(0,\tilde{t})-u(\tilde{x},\tilde{t})|\notag\\
\leq& C(R^{\alpha}+|\tilde{x}|^{\alpha})\leq C|x-\tilde{x}|^{\frac{\alpha}{c_{2}}}.
\end{align*}
Combining these above facts, we take $c_{1}=1+\frac{p\alpha}{\gamma(p+\vartheta)}$ and $c_{2}=1+\frac{\alpha}{\gamma}$ such that
\begin{align*}
\frac{(c_{1}-1)\gamma}{c_{1}p}=\frac{\alpha}{c_{1}(p+\vartheta)},\quad \frac{(c_{2}-1)\gamma}{c_{2}}=\frac{\alpha}{c_{2}}.
\end{align*}
The choice for the values of $c_{1}$ and $c_{2}$ is actually optimal in the sense that they maximize the H\"{o}lder regularity exponent. This is because $\frac{c_{i}-1}{c_{i}}$ increases in $c_{i}$ and $c_{i}^{-1}$ decreases in $c_{i}$ for $i=1,2.$ Consequently, we deduce that for any $(x,t),(\tilde{x},\tilde{t})\in B_{1/2}\times(-1/2,0),$
\begin{align*}
|u(x,t)-u(\tilde{x},\tilde{t})|\leq C\big(|x-\tilde{x}|+|t-\tilde{t}|^{\frac{1}{p+\vartheta}}\big)^{\frac{\alpha\gamma}{\alpha+\gamma}},
\end{align*}
which indicates that Theorem \ref{THM060} holds.

\end{proof}

%
%%
%\noindent{\bf{\large Conflict of interest.}} The authors declare that they have no conflict of interest.
%\noindent{\bf{\large Data Availability Statement.}} The data used to support the findings of this study are available from the corresponding author upon request.

\noindent{\bf{\large Acknowledgements.}} This work was partially supported by the National Key research and development program of
China (No. 2022YFA1005700). C.X. Miao was partially supported by the National Natural Science Foundation of China (No. 12371095 and 12071043). Z.W. Zhao was partially supported by China Postdoctoral Science Foundation (No. 2021M700358). The authors would like to thank Prof. Liao Naian for pointing out the problem that the constant $\gamma_{0}$ obtained in Lemma \ref{LEM0035} should not depend on the value of the level parameter $\sigma$, which encourages us to solve this problem.

%\noindent{\bf{\large Statements.}}

%\noindent{\bf{\large Acknowledgements.}}

\end{document}